\newtheorem{theorem}{Theorem}[section]
\newtheorem{lemma}[theorem]{Lemma}
\newtheorem{assumption}[theorem]{Assumption}
\newenvironment{customthm}[1]
  {\innercustomthm}
{\endinnercustomthm}
\newenvironment{customlemma}[1]
  {\innercustomlemma}
{\endinnercustomlemma}
\newcommand{\gradx}{\nabla_{\mathbf x}}
\newcommand{\grady}{\nabla_{\mathbf y}}
\newcommand{\opt}{\textnormal{opt}\,}
\newcommand{\op}{\textnormal{op}}
\newcommand{\z}{\mathbf z}
\newcommand{\argmin}{\mathop{\rm{argmin}}}
\newcommand{\argmax}{\mathop{\rm{argmax}}}
\newcommand{\ECal}{\mathcal{E}}
\newcommand{\XCal}{\mathcal{X}}
\newcommand{\br}{\mathbb{R}}
\newcommand{\ba}{\begin{array}}
\newcommand{\ea}{\end{array}}
\newcommand{\norm}[1]{\|{#1} \|}
\newcommand{\ls}{\left(}
\newcommand{\rs}{\right)}
\newcommand{\lb}{\left\lbrace}
\newcommand{\rb}{\right\rbrace}
\newcommand{\la}{\left\langle}
\newcommand{\ra}{\right\rangle}
\newcommand{\R}{\mathbb{R}}
\newcommand{\e}{\varepsilon}
\newcommand{\algo}{VIJI}
\newcommand{\algorestart}{\algo-Restarted}
\newcommand{\algominmax}{\algo-MinMax}
\newcommand{\algotensor}{VIHI}
\newcommand{\res}{\textsc{res}}
\newcommand{\nrestarts}{
\left\lceil \log \tfrac{D}{\e}\right\rceil
}
\newcommand{\Lp}{L_{p-1}}
\newcommand{\myopt}{\textsf{opt}}
\definecolor{mycolor}{RGB}{0,150,70}
\newcommand{\QN}{\text{QN }}
\title{Exploring Jacobian Inexactness in Second-Order Methods for Variational Inequalities: Lower Bounds, Optimal Algorithms and Quasi-Newton Approximations}
\author{
  Artem Agafonov \textsuperscript{1, 2}\thanks{Corresponding Author: Artem~Agafonov - \texttt{agafonov.ad@phystech.edu}. \\
  Correspodence to: 
   Petr~Ostroukhov - \texttt{postroukhov12@gmail.com};\\
   Roman Mozhaev - \texttt{mozhaev.rm@phystech.edu};
   Konstantin Yakovlev - \texttt{iakovlev.kd@phystech.edu};\\
   Eduard Gorbunov - \texttt{eduard.gorbunov@mbzuai.ac.ae};
   Martin Tak\'a\v{c} - \texttt{takac.MT@gmail.com};\\
   Alexander Gasnikov - \texttt{gasnikov@yandex.ru};
   Dmitry Kamzolov - \texttt{kamzolov.opt@gmail.com}.
   }, 
   Petr Ostroukhov\textsuperscript{1},
   Roman Mozhaev\textsuperscript{2},
   Konstantin Yakovlev\textsuperscript{2},\\
   \textbf{Eduard Gorbunov\textsuperscript{1},
   Martin Tak\'a\v{c}\textsuperscript{1},
   Alexander Gasnikov\textsuperscript{3,4,2},
   Dmitry Kamzolov\textsuperscript{1}}\\
   \AND
   \\
   \textsuperscript{1} Mohamed bin Zayed University of Artificial Intelligence (MBZUAI), Abu Dhabi, UAE\\
   \textsuperscript{2} Moscow Institute of Physics and Technology (MIPT), Moscow, Russia\\
   \textsuperscript{3} Innopolis University, Innopolis, Russia\\
   \textsuperscript{4} Skoltech, Moscow, Russia
}
\begin{document}

\maketitle

\begin{abstract}
    Variational inequalities represent a broad class of problems, including minimization and min-max problems, commonly found in machine learning. Existing second-order and high-order methods for variational inequalities require precise computation of derivatives, often resulting in prohibitively high iteration costs. In this work, we study the impact of Jacobian inaccuracy on second-order methods. For the smooth and monotone case, we establish a lower bound with explicit dependence on the level of Jacobian inaccuracy and propose an optimal algorithm for this key setting. When derivatives are exact, our method converges at the same rate as exact optimal second-order methods. To reduce the cost of solving the auxiliary problem, which arises in all high-order methods with global convergence, we introduce several Quasi-Newton approximations. Our method with Quasi-Newton updates achieves a global sublinear convergence rate. We extend our approach with a tensor generalization for inexact high-order derivatives and support the theory with experiments.
\end{abstract}

\addtocontents{toc}{\protect\setcounter{tocdepth}{-1}}
\section{Introduction}
In this paper, we primarily address the problem of solving the Minty Variational Inequality (MVI)~\cite{minty1962monotone}. Given a continuous operator $F: \XCal \to \R^d$, where $\XCal \subseteq \R^d$ is a closed bounded convex subset with a diameter $D = \max_{x,y \in \XCal} \|x - y\|$, the objective is to find a point  $x^* \in \XCal$ such that
\begin{equation}
    \label{eq:minty_problem}
    \langle F(x), x - x^* \rangle \geq 0, \quad \textnormal{for all } x \in \XCal. 
\end{equation}
The solution to \eqref{eq:minty_problem} is referred to as a weak solution of the Variational Inequality (VI)~\cite{facchinei2003finite}.
In contrast, the Stampacchia variational inequality problem~\cite{hartman1966some} consists in finding a point $x^* \in \XCal$ such that
\begin{equation}\label{eq:strong_problem}
    \langle F(x^*), x - x^* \rangle \geq 0, \quad \textnormal{for all } x \in \XCal.
\end{equation}
This solution is often called a strong solution to the variational inequality. When the operator 
$F$ is both continuous and monotone, the weak and strong solutions are equivalent~\cite{facchinei2003finite}.
\begin{assumption}
    \label{as:monotone}
    The operator $F(x)$ is called monotone, if 
    \begin{equation}
        \label{eq:monotone}
        \langle F(x) - F(y), x - y \rangle \geq 0, \quad \textnormal{for all } x, y \in \XCal. 
    \end{equation}
\end{assumption}
Another useful assumption is $L_1$-smoothness.
\begin{assumption}
    \label{as:smooth_2}
    The operator $F(x)$ is $L_1$-smooth, if it has Lipschitz-continuous first-order derivative 
    \begin{equation*}\label{eq:smooth}
        \|\nabla F(x) -  \nabla F(y)\|_\op \leq L_1\|x - y\|, \quad \textnormal{for all } x, y \in \XCal. 
    \end{equation*}
\end{assumption}
{\bf First-order methods\footnote{
        For clarity, let us note that VI methods that use only the information about the operator itself are commonly referred to as first-order methods. Methods that also use information about the Jacobian of the operator (the first-order derivative) are known as second-order methods. This somewhat contradictory notation stems from applications to minimization problems, where the operator is the gradient (i.e., the first-order derivative), and the Jacobian is the Hessian (i.e., the second-order derivative).
        }  
    .} 
    Variational inequalities encompass a wide range of problems, including minimization, min-max problems, Nash equilibrium, differential equations, and others~\cite{facchinei2003finite, beznosikov2023smooth}. The extensive research on VI methods dates back several decades, with a notable breakthrough in the 1970s—the development of the Extragradient method~\cite{korpelevich1976extragradient, antipin1978method}. Subsequently, it was demonstrated that this method achieves global convergence of $O \ls \e^{-1}\rs$~\cite{nemirovski2004prox}, matching the convergence rates of other first-order
    methods such as optimistic gradient~\cite{popov1980modification, mokhtari2020convergence, kotsalis2022simple},  forward-backward splitting~\cite{tseng2000modified}, and dual extrapolation~\cite{nesterov2007dual}. These first-order methods collectively exhibit optimal convergence~\cite{ouyang2021lower}. \\
{\bf Second-order and high-order methods.} To achieve further notable acceleration of methods for VIs, one can leverage information about higher-order derivatives. For instance, simply incorporating first-order derivatives (Jacobian) can significantly enhance the convergence speed of the method. Following recent advancements in second-order and high-order methods with global rates for minimization~\cite{nesterov2006cubic, nesterov2008accelerating, baes2009estimate, monteiro2013accelerated, nesterov2021implementable, kovalev2022first, carmon2022optimal}, several high-order methods for VIs have been proposed~\cite{bullins2022higher, lin2023monotone, jiang2022generalized, ostroukhov2020tensor,lin2022continuous,nesterov2023high,liu2022regularized,alves2023search}. However, all these methods involve a line-search procedure, resulting in $\tilde{O}\ls \e^{-2/3}\rs$ convergence for the case of first-order information (Jacobians). Recent works~\cite{lin2024perseus, adil2022optimal} propose methods with improved rates $O\ls \e^{-2/3}\rs$ and establish the lower bound $\Omega \ls \e^{-2/3}\rs$, rendering these algorithms optimal.\\
{\bf Jacobian's approximation.} 
    In the last decade, VIs found new applications in machine learning. There are many problems that could not be reduced to minimization, including reinforcement learning~\cite{omidshafiei2017deep, jin2020efficiently}, adversarial training~\cite{madry2017towards}, GANs~\cite{goodfellow2014generative, daskalakis2017training, gidel2018variational, mertikopoulos2018optimistic, chavdarova2019reducing, liang2019interaction, peng2020training}, classical learning tasks in supervised learning~\cite{joachims2005support, bach2012optimization}, unsupervised learning~\cite{xu2004maximum, bach2008convex}, image denoising~\cite{esser2010general, chambolle2011first}, robust optimization~\cite{ben2009robust}. Applying second-order methods, without even mentioning high-order ones, described in a previous paragraph to machine learning problems could be a challenging task. Although these methods may theoretically converge faster, computing exact Jacobians and the per-iteration costs can be expensive. Therefore, it seems natural to introduce inexact approximations of the first-order derivatives. In the context of minimization, several works with inexact Hessians were introduced for both convex~\cite{ghadimi2017second, agafonov2023inexact, antonakopoulos2022extra, agafonov2023advancing} and nonconvex~\cite{cartis2011adaptive,cartis2011adaptive2, cartis2018global, kohler2017sub, xu2020newton, tripuraneni2018stochastic,lucchi2022sub,bellavia2022stochastic, bellavia2022adaptive,doikov2023second} problems. Regarding VIs, Quasi-Newton (QN) methods can be highlighted, though they, unfortunately, achieve only local convergence in the strongly monotone case~\cite{bonnans1994local, fernandez2013quasi}. These methods are relatively less advanced for VIs compared to their counterparts in the field of minimization, where they are considered classics in optimization due to their effectiveness and practicality~\cite{nocedal2006numerical}. Modern research on QN approximations for minimization includes methods that exhibit global convergence~\cite{kamzolov2023cubic, jiang2023online, scieur2023adaptive, jiang2024accelerated}.
    A recent work~\cite{lin2022explicit} introduces the Newton-MinMax method for convex-concave unconstrained min-max optimization problems, demonstrating an optimal rate under special assumptions on the accuracy of the Jacobian approximation. However, the field of VIs lacks globally convergent inexact second-order methods with an explicit dependence on the accuracy of the Jacobian. This raises several natural questions:
    \begin{center}
        \textit{What are the lower bounds for methods with inexact Jacobians?\\
        Can we construct an optimal method with inexact first-order information?\\ 
        What is the proper way to approximate the Jacobian to ensure global convergence and reduce the iteration complexity?
        }
    \end{center}
    In our work, we attempt to answer these questions in a systematic manner.\\
{\bf Optimality measure.}
    Most of our results are stated for the monotone setting (Assumption~\ref{as:monotone}). In this context, the optimality of a point $\hat{x} \in \XCal$ is typically measured by a gap function $\textsc{gap}(\cdot): \XCal \to \R_+$~\cite{tseng2000modified, nemirovski2004prox, nesterov2007dual, mokhtari2020convergence, lin2024perseus}, defined by 
    \begin{equation}\label{eq:cc-gap}
        \textsc{gap}(\hat{x}) = \sup_{x \in \XCal} \ \langle F(x),  \hat{x} - x\rangle \leq \e, 
    \end{equation}
    where $\e \geq 0$ is the accuracy of solution.
    The boundedness of $\XCal$ and the existence of a strong solution ensure that the gap function is well-defined. If $\e = 0$,
    we get by~\eqref{eq:minty_problem} that $\hat{x}$ is a weak solution of VI.\\
    We explore the performance of the proposed algorithm in scenarios involving nonmonotone operators $F$. In such cases, it is essential to assume that the operator satisfies the Minty condition to ensure that the problem is computationally manageable~\cite{daskalakis2021complexity}.
    \begin{assumption}
        \label{as:minty}
        The operator $F(x)$ satisfies Minty condition, if there exists a point $x^*$ such that
        \begin{equation}
        \label{eq:minty}
            \langle F(x), x - x^* \rangle \geq 0, \quad \textnormal{for all } x \in \XCal. 
        \end{equation}
    \end{assumption}

    The range of applications of nonmonotone VIs satisfying Minty conditions is quite extensive~\cite{brighi2002characterizations, choi1990product, gallego2014dynamic, ewerhart2014cournot, li2017convergence, kleinberg2018alternative}.
    We note, that this condition is weaker than monotonicity~\cite{dang2015convergence, iusem2017extragradient, kannan2019optimal} and guarantees the existence of at least one strong solution since $F$ is continuous and $\XCal$ is closed and bounded~\cite{harker1990finite}. To measure the optimality of point $\hat{x}$ we define the residue function 
    $\textsc{res}(\cdot): \XCal \rightarrow \br_+$~\cite{dang2015convergence, iusem2017extragradient, kannan2019optimal}
    \begin{equation}\label{eq:cc-residue}
        \textsc{res}(\hat{x}) = \sup_{x \in \XCal} \ \langle F(\hat{x}),  \hat{x} - x\rangle \leq \e,
    \end{equation}
    The boundedness of $\XCal$ and the existence of a strong solution ensure that the residual function is well-defined. If $\e = 0$, 
    by~\eqref{eq:strong_problem}, we get that $\hat{x}$ is a strong solution of VI. \\
{\bf Contributions.} 
    The main contribution of this paper lies in the development of a new second-order method robust to inexactness in the Jacobian, a common occurrence in machine learning. We demonstrate the algorithm's optimality in the monotone case by establishing a lower bound for this key setting. Expanding further:
    \begin{enumerate}[noitemsep,topsep=0pt,leftmargin=12pt]
        \item We introduce a novel second-order algorithm, \algo\ (Second-order Method for {\bf V}ariational {\bf I}nequalitues under {\bf J}acoibian {\bf I}nexactness), designed to handle $\delta$-inexact\footnote{A formal definition is provided in the following section.} Jacobian information. Specifically, in the context of smooth and monotone VIs, \algo\ achieves a convergence rate of $O\left( \tfrac{\delta D^2}{T} + \tfrac{L_1D^3}{T^{3/2}} \right)$ to find weak solution. For smooth nonmonotone VIs satisfying the Minty condition, we demonstrate a convergence rate of $O\left( \tfrac{\delta D^2}{\sqrt{T}} + \tfrac{L_1 D^3}{T} \right)$ to identify strong solution.  Notably, when $\delta \leq \tfrac{L_1D}{\sqrt T}$, our method matches the convergence rates of optimal exact second-order methods~\cite{adil2022optimal, lin2024perseus}. 
        \item We establish the optimal performance of our algorithm on monotone smooth operators by deriving a theoretical complexity lower bound of $\Omega \left( \tfrac{\delta D^2}{T} + \tfrac{L_1 D^3}{T^{3/2}} \right)$ to find weak solution for the case of $\delta$-inexact Jacobians.
        \item Our algorithm involves solving a variational inequality subproblem. To tackle this challenge, we introduce an approximation condition, which makes the solution computationally feasible.
        \item We introduce a new Quasi-Newton update for approximating the Jacobian, which significantly decreases the per-iteration cost of the algorithm while maintaining a global sublinear convergence rate. Numerical experiments demonstrate the practical benefits of our method.
        \item We extend our algorithm for higher-order VIs with inexact high-order derivatives, resulting in $O\ls  \textstyle{\sum}_{i=1}^{p-1} \tfrac{\delta_iD^{i+1}}{T^{(i+1)/2}} + \tfrac{L_{p-1}D^{p+1}}{T^{(p+1) / 2}} \rs$ rate for monotone and smooth VIs with $\delta_i$-inexact $i$-th derivative to find weak solution.
        Moreover, we extend our proposed high-order method to nonmonotone VIs.
        \item We propose a restarted version of VIJI for strongly monotone VIs, which exhibits a linear rate.
    \end{enumerate}
{\bf Comparison with~\citet*{lin2022explicit}.} To the best of our knowledge, the work~\cite{lin2022explicit} is the most closely related to our research.
The objective of~\cite{lin2022explicit} was to develop a method for convex-concave unconstrained min-max optimization under inexact Jacobian information with an optimal convergence rate $O(\e^{-2/3})$ matching the lower bound $\Omega(\e^{-2/3})$~\cite{lin2024perseus}. The authors successfully achieve this goal by proposing a second-order algorithm Inexact-Newton-MinMax method based on the Perseus~\cite{lin2024perseus}. To attain optimal convergence, they constrained the Jacobian inexactness with a function that decreases as the method converges and bounded the norm of Jacobian from above.  While these assumptions might be suitable for randomized sampling in finite-sum and stochastic problems, they may not hold for many approximation strategies, such as Quasi-Newton algorithms. The aim of our work, however, is to study the impact of Jacobian inaccuracy on the convergence of second-order methods for VIs (a special case of which are min-max problems) and to identify the explicit dependence of the convergence rate on the inexactness. Compared to the work~\cite{lin2022explicit}, the Jacobian inaccuracy directly affects the step sizes in our algorithm, allowing us to achieve a convergence rate of $O(\e^{-2/3} + \delta \e^{-1})$ for any given $\delta$. \algo\ can be viewed as a generalization of Inexact-Newton-MinMax. With the same assumption on $\delta$ as in~\cite{lin2022explicit}, our methods for min-max optimization are equivalent. The inexactness of the subproblem and the solution approach proposed in~\cite{lin2022explicit} remain valid for our method even with arbitrary large $\delta$. Further details about application of our method to min-max problems can be found in Appendix~\ref{app:minmax}.
\section{Preliminaries}
{\bf Notation.} Let $\R^d$ be a finite-dimensional vector space with scalar product $\langle \cdot, \cdot \rangle$. For vector $x \in \R^d$ we denote Euclidean norm as $\|x\|$. For $X \in \br^{d_1 \times \ldots \times d_p}$, we define 
\begin{equation*}
    X[z^1, \cdots, z^p] = \textstyle{\sum}_{1 \leq i_j \leq d_j, 1 \leq j \leq p} (X_{i_1, \cdots, i_p})z_{i_1}^1 \cdots z_{i_p}^p, 
\end{equation*}
and $\|X\|_\op = \max_{\|z^i\|=1, 1 \leq j \leq p} X[z^1, \cdots, z^p]$. 
Fixing $p \geq 1$ and letting $F: \br^d \rightarrow \br^d$ be a continuous and high-order differentiable operator, we define $\nabla^{(p)} F(x)$ as the $p^{\textnormal{th}}$-order derivative at a point $x \in \br^d$. To be more precise, letting $z_1, \ldots, z_k \in \br^d$, we have
\begin{equation*}
    \nabla^{(k)} F(x)[z^1, \cdots, z^k] = \textstyle{\sum}_{1 \leq i_1, \ldots, i_k \leq d} \left(\frac{\partial F_{i_1}}{\partial x_{i_2} \cdots \partial x_{i_k}}(x)\right) z_{i_1}^1 \cdots z_{i_k}^k. 
\end{equation*}
\\
{\bf Taylor approximation and oracle feedback.} The starting point for our method is the first-order Taylor polynomial of the operator $F$ at point $v:$ $\Phi_v(x) = F(v) + \nabla F(v)[x-v]$.
Since the computation of Jacobian $\nabla F(v)$ could be a quite tiresome task, it seems natural to introduce an inexact approximation $J(v)$. Based on it we introduce inexact Taylor approximation -- one of the main building blocks of our algorithm
\begin{equation}
    \label{eq:taylor_inexact_1ord}
    \Psi_v(x) = F(v) + J(v)[x-v], \quad v \in \R^d,
\end{equation}
where $J(x)$ satisfies the following assumption.
\begin{assumption}
    \label{as:inexact_jac}
    For given $v \in \XCal$ $\delta$-inexact Jacobian satisfies:
    \begin{equation}
    \label{eq:inexact_jac}
        \|\nabla F(v) - J(v)\| \leq \delta.
    \end{equation}
\end{assumption}
As it was shown, e.g. in~\cite{jiang2022generalized}, Assumption~\ref{as:smooth_2} allows to control the quality of approximation of operator $F$ by its Taylor polynomial
\begin{equation}
    \label{eq:f_bound_2}
    \|F(x) - \Phi_v(x)\| \leq \tfrac{L_1}{2}\|x - v\|^2, \quad x,v \in \XCal.
\end{equation}
The next lemma is counterpart of \eqref{eq:f_bound_2} for the case of inexact Jacobian.
\begin{lemma}
    \label{lm:f_bnd_inxt_2}
    Let Assumptions~\ref{as:smooth_2} and \ref{as:inexact_jac} hold. Then, for any $x, v \in \XCal$ 
    \begin{equation*}
        \|F(x) - \Psi_v(x)\| \leq \tfrac{L_1}{2}\|x - v\|^2 + \delta\|x-v\|.
    \end{equation*}
\end{lemma}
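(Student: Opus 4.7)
The plan is to reduce this to the already-quoted exact-Jacobian bound \eqref{eq:f_bound_2} by inserting the exact Taylor polynomial $\Phi_v$ as an intermediate term. Concretely, I would write
\[
F(x) - \Psi_v(x) = \bigl(F(x) - \Phi_v(x)\bigr) + \bigl(\Phi_v(x) - \Psi_v(x)\bigr),
\]
apply the triangle inequality, and bound the two pieces separately.

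For the first piece, Assumption~\ref{as:smooth_2} is in force, so \eqref{eq:f_bound_2} directly gives $\|F(x)-\Phi_v(x)\|\le \tfrac{L_1}{2}\|x-v\|^2$. For the second piece, the definitions of $\Phi_v$ and $\Psi_v$ in~\eqref{eq:taylor_inexact_1ord} both contain the same $F(v)$ term, so they cancel and leave
\[
\Phi_v(x) - \Psi_v(x) = \bigl(\nabla F(v) - J(v)\bigr)[x-v].
\]
I would then apply the operator-norm inequality $\|A z\|\le \|A\|_{\op}\|z\|$ together with Assumption~\ref{as:inexact_jac} to obtain $\|\Phi_v(x)-\Psi_v(x)\|\le \delta\,\|x-v\|$. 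Adding the two bounds yields exactly $\tfrac{L_1}{2}\|x-v\|^2+\delta\|x-v\|$.

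I do not foresee a genuine obstacle here: the lemma is essentially a one-line triangle-inequality argument, the only subtlety being that Assumption~\ref{as:inexact_jac} is stated with $\|\cdot\|$ (not explicitly with the $\op$ subscript), so I would briefly note that for a matrix acting on vectors this is the induced operator norm, which is what makes the estimate $\|(\nabla F(v)-J(v))[x-v]\|\le \delta\|x-v\|$ legitimate. Everything else is a routine combination of \eqref{eq:f_bound_2} and \eqref{eq:inexact_jac}.
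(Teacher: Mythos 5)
Your proposal is correct and matches the paper's own proof exactly: insert $\Phi_v$ as an intermediate term, apply the triangle inequality, bound the first piece by \eqref{eq:f_bound_2} and the second by Assumption~\ref{as:inexact_jac} via the induced operator norm. Your remark about interpreting $\|\cdot\|$ in \eqref{eq:inexact_jac} as the operator norm is a sensible clarification that the paper leaves implicit.
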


\section{VIJI algorithm}\label{sec:method}

In this section,  extending on recent optimal high-order method for MVIs Perseus~\cite{lin2024perseus}, we present our proposed method, dubbed as \algo\ and detailed in Algorithm~\ref{alg:main}.\\
{\bf The model of objective and subproblem's solution.} We begin the description of the algorithm by introducing the inexact model of objective $\Omega^{\eta}_{v}(x)$
\begin{equation}
    \label{eq:model}
    \Omega^{\eta}_{v}(x) = \Psi_v(x) + \eta \delta (x-v)  + 5L_1\|x-v\|(x-v),
\end{equation}
where $\eta> 0$ is given constant. Here, we introduced the additional regularization term $\eta \delta (x-v)$. As we will demonstrate, this term is crucial for ensuring that the method's subproblem has a solution. For brevity, we use a regularization constant of $5L$. Using larger coefficients would yield the same convergence rate.   As other dual extrapolation-type methods, \algo\ includes the following subproblem
\begin{equation}
    \label{eq:subproblem}
    \textnormal{find } x_{k+1} \in \XCal \textnormal{ such that } \langle \Omega^\eta_{v_{k+1}}(x_{k+1}), x - x_{k+1} \rangle \geq 0 \textnormal{ for all } x \in \XCal. 
\end{equation}
First of all, the strong solution of this VI exists because $\Omega^\eta_{v_{k+1}}(x)$ is continuous, and $\XCal$ is a closed, bounded, and convex set. Next, we demonstrate that in a monotone setting VI~\eqref{eq:subproblem} is also monotone.
\begin{lemma}
\label{lm:subproblem_monotone}
    Let Assumptions~\eqref{as:monotone},~\eqref{as:smooth_2},~\eqref{as:inexact_jac} hold. Then for any $x,v_{k+1} \in \XCal$ VI~\eqref{eq:subproblem} is monotone
    \begin{equation*}
        \tfrac{1}{2} \ls \nabla \Omega_{v} (x) +  \nabla \Omega_{v}(x)^T \rs \succeq 4L_1\|x - v\| I_{d \times d} + 5L_1\tfrac{(x-v)(x-v)^T}{\|x-v\|} + (\eta - 1)\delta I_{d \times d}. 
    \end{equation*}
\end{lemma}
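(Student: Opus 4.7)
The plan is to verify monotonicity of the subproblem operator by directly computing the Jacobian $\nabla_x \Omega^{\eta}_v(x)$ of the model in \eqref{eq:model} and showing that its symmetrization dominates the claimed expression. Since $\Omega^{\eta}_v$ is a sum of three terms, I would differentiate each piece separately: the inexact Taylor part $\Psi_v(x) = F(v) + J(v)(x-v)$ contributes the constant Jacobian $J(v)$; the linear regularizer $\eta \delta (x-v)$ contributes $\eta \delta I$; and the quadratic-type regularizer $g(x) = \|x-v\|(x-v)$ contributes, by a direct computation, $\nabla g(x) = \|x-v\| I + \frac{(x-v)(x-v)^\T}{\|x-v\|}$, which is already symmetric (and positive semidefinite). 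Summing, every term except $J(v)$ is symmetric, so
\begin{equation*}
\tfrac{1}{2}\!\left(\nabla \Omega^{\eta}_v(x) + \nabla \Omega^{\eta}_v(x)^\T\right) = \tfrac{1}{2}(J(v) + J(v)^\T) + \eta\delta I + 5L_1\|x-v\| I + 5L_1\tfrac{(x-v)(x-v)^\T}{\|x-v\|}.
\end{equation*}

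The remaining task is to lower-bound the symmetric part of $J(v)$. Monotonicity of $F$ (Assumption~\ref{as:monotone}) is equivalent to $\tfrac{1}{2}(\nabla F(v) + \nabla F(v)^\T) \succeq 0$ for all $v \in \XCal$. By Assumption~\ref{as:inexact_jac}, the residual $E := J(v) - \nabla F(v)$ satisfies $\|E\|_{\op} \leq \delta$; since operator norm is invariant under transposition, its symmetric part obeys $\tfrac{1}{2}(E + E^\T) \succeq -\delta I$. Adding these two PSD-type inequalities yields $\tfrac{1}{2}(J(v) + J(v)^\T) \succeq -\delta I$.

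Substituting this bound into the previous display produces
\begin{equation*}
\tfrac{1}{2}\!\left(\nabla \Omega^{\eta}_v(x) + \nabla \Omega^{\eta}_v(x)^\T\right) \succeq (\eta - 1)\delta I + 5L_1\|x-v\| I + 5L_1\tfrac{(x-v)(x-v)^\T}{\|x-v\|},
\end{equation*}
from which the claim follows by weakening $5L_1\|x-v\|I$ to $4L_1\|x-v\|I$; note that $L_1$-smoothness itself is not invoked in the argument, it is simply a standing assumption of the section.

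The only delicate step is the implication $\|E\|_{\op} \leq \delta \Longrightarrow \tfrac{1}{2}(E + E^\T) \succeq -\delta I$ for possibly non-symmetric $E$, which I would justify by observing that for any unit vector $u$, $u^\T (\tfrac{1}{2}(E+E^\T))u = u^\T E u$ has absolute value bounded by $\|E\|_{\op}$. Everything else is either straightforward calculus (the Jacobian of $\|x-v\|(x-v)$) or a direct combination of PSD inequalities, so I do not anticipate a substantive obstacle beyond carefully tracking which terms are symmetric before adding.
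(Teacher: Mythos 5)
Your proof is correct, but it takes a slightly different route from the paper's. Both start by symmetrizing the Jacobian of the model to get $\tfrac{1}{2}(J(v)+J(v)^\T)+\eta\delta I+5L_1\|x-v\|I+5L_1\tfrac{(x-v)(x-v)^\T}{\|x-v\|}$, but you then bound the symmetric part of $J(v)$ directly against $\nabla F(v)$: using $\|J(v)-\nabla F(v)\|_\op\le\delta$ you get $\tfrac{1}{2}(J(v)+J(v)^\T)\succeq\tfrac{1}{2}(\nabla F(v)+\nabla F(v)^\T)-\delta I\succeq-\delta I$ by monotonicity at $v$, and you only invoke the $4L_1$ coefficient at the very end by deliberately weakening $5L_1\|x-v\|I$. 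The paper instead transports $J(v)$ to the Jacobian at the \emph{other} point, writing $\tfrac{1}{2}(J(v)+J(v)^\T)\succeq\tfrac{1}{2}(\nabla F(x)+\nabla F(x)^\T)-L_1\|x-v\|I-\delta I$ (this is where Assumption~\ref{as:smooth_2} is actually consumed, via $\|\nabla F(x)-J(v)\|_\op\le\|\nabla F(x)-\nabla F(v)\|_\op+\delta$) and then applies monotonicity at $x$; the $4L_1\|x-v\|$ in the statement is exactly what survives after paying $L_1\|x-v\|$ for that transport. The upshot of your argument is that it is marginally cleaner and a touch stronger: it shows the lemma holds with coefficient $5L_1$ rather than $4L_1$ and that first-order smoothness is not needed at all for this particular claim, whereas the paper's version keeps the smoothness-based transport (and the resulting $4L_1$) presumably for uniformity with the rest of the analysis. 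Your auxiliary steps — the Jacobian of $\|x-v\|(x-v)$ (valid for $x\neq v$, which is the only regime where the right-hand side makes sense) and the implication $\|E\|_\op\le\delta\Rightarrow\tfrac{1}{2}(E+E^\T)\succeq-\delta I$ via $u^\T E u\ge-\|E\|_\op$ — are both sound.
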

Following the work~\cite{lin2024perseus}, one can find a strong solution to such VI using mirror-prox methods from~\cite{ablaev2022some}, achieving the following approximate condition
\begin{equation}
    \label{eq:subproblem_cnd}
    \sup_{x \in \XCal}  \langle \Omega_{v_{k+1}}(x_{k+1}), x_{k+1} - x\rangle \leq \tfrac{L_1}{2}\|x_{k+1} - v_{k+1}\|^{3} + \delta \|x_{k+1} - v_{k+1}\|^2.
\end{equation}
\begin{algorithm}[!t]
    \begin{algorithmic}\caption{\textsf{\algo}}
    \label{alg:main}
        \STATE \textbf{Input:} initial point $x_0 \in \XCal$, parameters $L_1$, $\eta$, sequence $\{\beta_k\}$, and $\textsf{opt} \in \{0, 1, 2\}$. 
        \STATE \textbf{Initialization:} set $s_0 = 0 \in \br^d$.
        \FOR{$k = 0, 1, 2, \ldots, T$} 
        \STATE Compute $v_{k+1} = \argmax_{v \in \XCal} \{\langle s_k, v - x_0\rangle - \frac{1}{2}\|v - x_0\|^2\}$. 
        \STATE Compute $x_{k+1} \in \XCal$ such that condition~\eqref{eq:subproblem_cnd} holds true. 
        \STATE  Compute $\lambda_{k+1}$ such that $\tfrac{1}{32} \leq \lambda_{k+1} \ls \tfrac{L_1}{2} \|x_{k+1} - v_{k+1}\| + \beta_{k+1}\rs \leq \tfrac{1}{22}$. 
        \STATE Compute $s_{k+1} = s_k - \lambda_{k+1} F(x_{k+1})$. 
        \ENDFOR
        \STATE \textbf{Output:} $\hat{x} = \left\{
            \begin{array}{cl}
                \tilde{x}_T = \frac{1}{\sum_{k=1}^T \lambda_k}\sum_{k=1}^T \lambda_k x_k, & \textnormal{if } \textsf{opt} = 0, \\
                x_T, & \textnormal{else if } \textsf{opt} = 1, \\ 
                x_{k_T} \textnormal{ for } k_T = \argmin_{1 \leq k \leq T} \|x_k - v_k\|, & \textnormal{else if } \textsf{opt} = 2.
            \end{array}
            \right.$
    \end{algorithmic}
\end{algorithm}
This ensures that the subproblem is computationally solvable in the monotone setting. In specific cases, such as minimax optimization, other efficient subsolvers can be employed~\cite{huang2022cubic, adil2022optimal, lin2022explicit}.\\
{\bf Adaptive dual stepsizes.} Adaptive stepsizes in dual space $\lambda_k$ are another core aspect of the algorithm. Due to inaccuracies in the Jacobian, applying the standard adaptive strategy, such as in the Perseus algorithm~\cite{lin2024perseus}, can lead to excessively large steps, potentially slowing down the method. To address this issue, an additional term $\beta_k$ is incorporated into the adaptive strategy for selecting $\lambda_k$. Thus, when $\|x_{k} - v_{k}\|$ is small (indicating proximity to the optimum), $\beta_k$ has a greater influence on the choice of $\lambda_k$, preventing the method from taking overly aggressive steps. Similar behavior can be observed in accelerated second-order methods for minimization with inexact Hessians from theoretical~\cite[Lemma 5]{agafonov2023inexact},~\cite[Appendix B, Lemma E.3]{agafonov2023advancing} and practical~\cite[Section 7]{agafonov2023advancing} perspectives.\\
{\bf Convergence in monotone setting.}
Now, we are prepared to present the convergence theorem of Algorithm~\ref{alg:main} in the monotone case.
\begin{theorem}\label{thm:monotone}
    Let Assumptions~\ref{as:monotone},~\ref{as:smooth_2},~\ref{as:inexact_jac} hold. Then, after $T \geq 1$ iterations of Algorithm~\ref{alg:main} with parameters $\beta_k=\delta,~ \eta = 10, ~ \textsf{opt}=0$, we get the following bound
    \begin{equation}
        \label{eq:monotone_convergence}
        \textsc{gap}(\tilde{x}_T) = \sup_{x \in \XCal} \ \langle F(x), \tilde{x}_T - x\rangle = O \ls \tfrac{L_1 D^3}{T^{3/2}} + \tfrac{\delta D^2}{T}\rs.
    \end{equation}
\end{theorem}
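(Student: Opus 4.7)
The plan is to reduce the theorem to two complementary estimates: a uniform $O(D^2)$ bound on the weighted residual $R_T(x) := \sum_{k=1}^T \lambda_k\langle F(x_k), x_k - x\rangle$ for every $x\in\XCal$, and a lower bound $\sum_{k=1}^T \lambda_k = \Omega\bigl(T^{3/2}/(L_1 D) + T/\delta\bigr)$. Monotonicity of $F$ gives $\langle F(x), x_k - x\rangle \le \langle F(x_k), x_k - x\rangle$, so Jensen's inequality applied to $\tilde x_T = (\sum_k\lambda_k)^{-1}\sum_k \lambda_k x_k$ yields $\textsc{gap}(\tilde x_T) \le \sup_{x\in\XCal} R_T(x)/\sum_k\lambda_k$, and substituting the two estimates produces the advertised rate $O\bigl(L_1 D^3/T^{3/2} + \delta D^2/T\bigr)$.

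For the per-iteration bound, I would apply the inexact subproblem condition~\eqref{eq:subproblem_cnd} at the probe point $x = v_{k+1}$, expand $\Omega^\eta_{v_k}(x_k) = F(x_k) + [\Psi_{v_k}(x_k) - F(x_k)] + \eta\delta(x_k - v_k) + 5L_1\|x_k - v_k\|(x_k - v_k)$ to introduce $F(x_k)$, and control the Taylor-approximation error through Lemma~\ref{lm:f_bnd_inxt_2}. Cauchy--Schwarz plus the triangle inequality $\|x_k - v_{k+1}\| \le \|x_k - v_k\| + \|v_k - v_{k+1}\|$ and the upper stepsize bound $\lambda_k(\tfrac{L_1}{2}\|x_k - v_k\| + \delta) \le \tfrac{1}{22}$ turn every cubic or $\delta$-weighted quadratic term into a quadratic one in $\|x_k - v_k\|$; with $\eta = 10$ the arithmetic simplifies cleanly to $\lambda_k \langle F(x_k), x_k - v_{k+1}\rangle \le \tfrac{35}{22}\|x_k - v_k\|^2 + \tfrac12\|v_{k+1} - v_k\|^2$. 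Complementing this, the proximal update $v_{k+1} = \argmax_v\{\langle s_k, v-x_0\rangle - \tfrac12\|v-x_0\|^2\}$ gives, via a three-point identity on the conjugate $\phi^*(s) = \max_y\{\langle s, y-x_0\rangle - \tfrac12\|y-x_0\|^2\}$ combined with the Fenchel-duality identity $D_{\phi^*}(s_{k-1}, s_k) = D_\phi(v_{k+1}-x_0,\, v_k-x_0)$ for $\phi = \tfrac12\|\cdot\|^2 + \iota_{\XCal-x_0}$ (which is $1$-strongly convex, so $D_\phi \ge \tfrac12\|v_{k+1}-v_k\|^2$), a dual-averaging telescope of the form $\sum_k \lambda_k\langle F(x_k), v_{k+1} - x\rangle + \tfrac12\sum_k\|v_{k+1} - v_k\|^2 \le \tfrac{D^2}{2}$. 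Splitting $\langle F(x_k), x_k - x\rangle = \langle F(x_k), x_k - v_{k+1}\rangle + \langle F(x_k), v_{k+1} - x\rangle$ and summing, the $\|v_{k+1} - v_k\|^2$ terms cancel, leaving $R_T(x) \le \tfrac{D^2}{2} + \tfrac{35}{22}\sum_k\|x_k - v_k\|^2$ uniformly in $x\in\XCal$.

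The final step is to bound $\sum_k\|x_k - v_k\|^2 = O(D^2)$ and thereby lower-bound $\sum_k \lambda_k$. Reapplying the subproblem condition at the easier probe point $x = v_k$, expanding as above and using Lemma~\ref{lm:f_bnd_inxt_2} and $\eta = 10$, produces the contraction estimate $\langle F(x_k), x_k - v_k\rangle \le -4L_1\|x_k - v_k\|^3 - 8\delta\|x_k - v_k\|^2$. Multiplying by $\lambda_k$ and case-splitting on the lower stepsize bound $\lambda_k(\tfrac{L_1}{2}\|x_k - v_k\|+\delta) \ge \tfrac{1}{32}$ gives $\lambda_k \langle F(x_k), x_k - v_k\rangle \le -\tfrac18\|x_k - v_k\|^2$. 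Plugging $x = x^*$ (any weak solution, which exists because $F$ is continuous and $\XCal$ is compact) into the uniform residual bound, together with the monotonicity-based inequality $R_T(x^*) \ge 0$ and the split $\langle F(x_k), x_k - x^*\rangle = \langle F(x_k), x_k - v_k\rangle + \langle F(x_k), v_k - x^*\rangle$, converts the negative contribution into the sought estimate $\sum_k\|x_k - v_k\|^2 \le O(D^2)$. The harmonic--arithmetic mean inequality and Cauchy--Schwarz then give $\sum_k \lambda_k \ge T^2 / \bigl(16 L_1 \sqrt T\sqrt{\sum_k\|x_k - v_k\|^2} + 32\delta T\bigr) = \Omega(T^{3/2}/(L_1 D) + T/\delta)$, closing the argument. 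The main technical obstacle will be extracting the cancelling $-\tfrac12\sum\|v_{k+1} - v_k\|^2$ term from the dual-averaging telescope despite $\phi^*$ being only convex (which requires the Fenchel-duality identification of $D_{\phi^*}$ with the $1$-strongly convex primal Bregman divergence on $\XCal - x_0$), and then ensuring the constants in front of $\sum\|x_k - v_k\|^2$ remain strictly below the cancellation threshold so the recursion actually closes.
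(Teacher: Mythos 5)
Your overall architecture (uniform bound on $R_T(x)=\sum_k\lambda_k\langle F(x_k),x_k-x\rangle$, a lower bound on $\sum_k\lambda_k$, then monotonicity plus Jensen) is exactly the paper's, and several ingredients are right: the telescope $\sum_k\lambda_k\langle F(x_k),v_{k+1}-x\rangle+\tfrac12\sum_k\|v_{k+1}-v_k\|^2\le\tfrac12\|x-x_0\|^2$ follows from the first-order optimality of the $v_{k+1}$-update (no Fenchel machinery needed), and your probe-at-$v_k$ estimate $\lambda_k\langle F(x_k),x_k-v_k\rangle\le-\tfrac18\|x_k-v_k\|^2$ is a correct consequence of condition~\eqref{eq:subproblem_cnd}, Lemma~\ref{lm:f_bnd_inxt_2}, $\eta=10$ and the lower stepsize bound. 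The genuine gap is in how you propose to obtain $\sum_k\|x_k-v_k\|^2=O(D^2)$, which your argument needs twice (to make the uniform bound $R_T(x)\le\tfrac{D^2}{2}+\tfrac{35}{22}\sum_k\|x_k-v_k\|^2$ useful, and to lower-bound $\sum_k\lambda_k$). Your per-iteration estimate at the probe $v_{k+1}$ applies Cauchy--Schwarz wholesale, so the regularization terms $-5L_1\|x_k-v_k\|^3$ and $-\eta\delta\|x_k-v_k\|^2$ are discarded and the coefficient of $\|x_k-v_k\|^2$ comes out \emph{positive}; the paper instead keeps the sign information via $\langle x_k-v_k,x_k-v_{k+1}\rangle\ge\|x_k-v_k\|^2-\|x_k-v_k\|\|v_k-v_{k+1}\|$ and gets $\lambda_k\langle F(x_k),x_k-v_{k+1}\rangle\le\tfrac12\|x_k-v_k\|\|v_k-v_{k+1}\|-\tfrac14\|x_k-v_k\|^2$, which after Young's inequality and the telescope yields $R_T(x)+\tfrac18\sum_k\|x_k-v_k\|^2\le\tfrac12\|x-x_0\|^2$ for all $x$, so that both the uniform bound and (at $x=x^*$) $\sum_k\|x_k-v_k\|^2\le4D^2$ drop out simultaneously (Lemmas~\ref{lm:DE-descent},~\ref{lm:DE-error}).

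Your attempted repair does not close this hole: plugging $x=x^*$ into your uniform bound together with $R_T(x^*)\ge0$ is vacuous (it reads $0\le\tfrac{D^2}{2}+\tfrac{35}{22}\sum_k\|x_k-v_k\|^2$), and the split $\langle F(x_k),x_k-x^*\rangle=\langle F(x_k),x_k-v_k\rangle+\langle F(x_k),v_k-x^*\rangle$ only gives $\tfrac18\sum_k\|x_k-v_k\|^2\le\sum_k\lambda_k\langle F(x_k),v_k-x^*\rangle$, a quantity none of your tools control: the dual-averaging telescope pairs $F(x_k)$ with $v_{k+1}$ (since $s_k$ already contains $\lambda_kF(x_k)$), and the leftover $\sum_k\lambda_k\langle F(x_k),v_k-v_{k+1}\rangle$ cannot be bounded from the approximate condition~\eqref{eq:subproblem_cnd}, which provides only one-sided bounds on $\langle\Omega^\eta_{v_k}(x_k),x_k-x\rangle$, nor by a bound on $\lambda_k\|F(x_k)\|$ that would be admissible in the final rate. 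To fix the proof you must fold the negativity into the $v_{k+1}$-probe estimate itself, i.e.\ carry out the sign-preserving computation of Lemma~\ref{lm:DE-descent}; once that is done, your harmonic-mean lower bound on $\sum_k\lambda_k$ (the paper uses H\"older with exponents $3$ and $3/2$ in Lemma~\ref{lm:DE-control}, but your version gives the same $T^{3/2}$ and $T$ rates) and the concluding Jensen step are fine.
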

The upper bound~\eqref{eq:monotone_convergence} consists of two terms. The first one corresponds to exact convergence and matches the lower bound for second-order VI methods~\cite{lin2024perseus}. The second term illustrates the impact of the Jacobian's inexactness on the convergence rate, aligning with the lower bound for first-order methods~\cite{ouyang2021lower}. In the next theorem, we show that the method can achieve the optimal convergence rate of second-order methods under additional assumption on $\delta$.
\begin{theorem}\label{thm:monotone_exact}
    Let Assumptions~\ref{as:monotone},~\ref{as:smooth_2},
    ~\ref{as:inexact_jac} hold. Let $\{x_k, v_k\}$ be iterates generated by Algorithm~\ref{alg:main} and
    \begin{equation}
        \label{eq:delta_exact}
        \|(\nabla F(v_k) - J(v_k))[x_k - v_k]\| \leq \delta_k\|x_k - v_k\|, \quad \delta_k \leq \tfrac{L_1}{2}\|x_k - v_k\|.
    \end{equation}
    Then, after $T \geq 1$ iterations of Algorithm~\ref{alg:main} with parameters $\beta_k= \tfrac{L_1}{2}\|x_k - v_k\|,~ \eta = 10, ~ \textsf{opt}=0$, we get the following bound
    \begin{equation*}
    \label{eq:monotone_convergence_exact}
        \textsc{gap}(\tilde{x}_T) = \sup_{x \in \XCal} \ \langle F(x), \tilde{x}_T - x\rangle \leq O \ls \tfrac{L_1 D^3}{T^{3/2}}\rs.
    \end{equation*}
\end{theorem}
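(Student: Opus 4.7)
\textbf{Proof plan for Theorem~\ref{thm:monotone_exact}.}
The approach is to follow the proof of Theorem~\ref{thm:monotone} while exploiting the directional Jacobian bound in~\eqref{eq:delta_exact} to upgrade Lemma~\ref{lm:f_bnd_inxt_2} along the only direction that actually appears in the analysis, namely $x_k - v_k$. Writing
\begin{equation*}
F(x_k) - \Psi_{v_k}(x_k) = \bigl(F(x_k) - F(v_k) - \nabla F(v_k)[x_k-v_k]\bigr) + \bigl(\nabla F(v_k) - J(v_k)\bigr)[x_k-v_k],
\end{equation*}
the first term is controlled by the standard consequence of Assumption~\ref{as:smooth_2} and bounded by $\tfrac{L_1}{2}\|x_k-v_k\|^2$, while the second is bounded by $\delta_k\|x_k-v_k\| \le \tfrac{L_1}{2}\|x_k-v_k\|^2$ via~\eqref{eq:delta_exact}. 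Thus $\|F(x_k) - \Psi_{v_k}(x_k)\| \le L_1\|x_k-v_k\|^2$, i.e.\ the first-order inexactness term $\delta\|x-v\|$ of Lemma~\ref{lm:f_bnd_inxt_2} collapses into the same cubic-type scaling as the Lipschitz-Jacobian term.

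Next, I would reuse the dual-extrapolation master inequality established for Theorem~\ref{thm:monotone}: for every $x \in \XCal$,
\begin{equation*}
\sum_{k=1}^T \lambda_k \langle F(x_k), x_k - x\rangle \le \tfrac{1}{2}\|x - x_0\|^2 + \sum_{k=1}^T \lambda_k \langle F(x_k) - \Omega^\eta_{v_k}(x_k), x_k - x\rangle + \sum_{k=1}^T \lambda_k \sup_{x \in \XCal}\langle \Omega^\eta_{v_k}(x_k), x_k - x\rangle.
\end{equation*}
The subproblem residual is bounded by~\eqref{eq:subproblem_cnd}, and the modelling residual is bounded by the improved Taylor estimate above plus the explicit regularization pieces $\eta\delta(x_k-v_k) + 5L_1\|x_k-v_k\|(x_k-v_k)$. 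Under~\eqref{eq:delta_exact} each of these can be uniformly dominated by $C L_1\|x_k - v_k\|^2$ for a universal constant $C$, since any $\delta$-contribution evaluated on the direction $x_k-v_k$ is majorized by $\tfrac{L_1}{2}\|x_k-v_k\|^2$, which is exactly the role that was previously played by $\beta_k = \delta$.

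Now I would use the new stepsize choice $\beta_k = \tfrac{L_1}{2}\|x_k-v_k\|$: the rule in Algorithm~\ref{alg:main} becomes $\tfrac{1}{32} \le \lambda_{k+1}\cdot L_1\|x_{k+1}-v_{k+1}\| \le \tfrac{1}{22}$, recovering exactly the stepsize regime of the exact Perseus method~\cite{lin2024perseus}. Consequently $\lambda_k L_1 \|x_k-v_k\|^2 = O(\|x_k-v_k\|)$ and $\lambda_k L_1 \|x_k-v_k\|^3 = O(\|x_k-v_k\|^2)$, so all error terms telescope into quantities of order $\sum_k \|x_k-v_k\|^2$, which is in turn bounded by $O(D^2)$ via the standard potential-function argument (as in the analysis of Theorem~\ref{thm:monotone}).

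Taking the supremum over $x \in \XCal$, dividing by $\sum_{k=1}^T \lambda_k$, and using convexity together with the definition $\tilde x_T = (\sum_k \lambda_k)^{-1}\sum_k \lambda_k x_k$, the gap becomes
\begin{equation*}
\textsc{gap}(\tilde x_T) \le \frac{O(D^2)}{\sum_{k=1}^T \lambda_k}.
\end{equation*}
The final step is to lower bound $\sum_{k=1}^T \lambda_k = \Omega\bigl(\sum_k 1/(L_1\|x_k-v_k\|)\bigr)$ by $\Omega(T^{3/2}/(L_1^{1/2} D^{1/2}))$ via a power-mean / Jensen argument using the bound on $\sum_k\|x_k-v_k\|^2$, yielding the claimed $O(L_1 D^3/T^{3/2})$ rate. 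The main obstacle is verifying the second paragraph cleanly, i.e.\ showing that the regularization term $\eta\delta(x_k-v_k)$ in the model (which still references the global $\delta$ of Assumption~\ref{as:inexact_jac}) can be absorbed under the new assumption~\eqref{eq:delta_exact}; this is handled by noting that this term only ever enters the analysis paired with the direction $x_k-v_k$, so the hypothesis $\delta_k \le \tfrac{L_1}{2}\|x_k-v_k\|$ is exactly what is needed to convert it into an $O(L_1\|x_k-v_k\|^2)$ contribution.
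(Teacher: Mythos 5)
Your plan is essentially the paper's own proof: keep the dual-extrapolation estimates of Lemmas~\ref{lm:DE-descent} and~\ref{lm:DE-error} unchanged (with every $\delta$-contribution absorbed via $\delta_k \leq \tfrac{L_1}{2}\|x_k - v_k\|$, exactly as you argue for the Taylor remainder and the $\eta\delta$-regularization), exploit the new stepsize window $\tfrac{1}{32} \leq \lambda_k L_1\|x_k - v_k\| \leq \tfrac{1}{22}$, and redo only the H\"older step bounding $\sum_k \lambda_k$ (the paper's Lemma~\ref{lm:DE-control-exact}). The one slip is in your final paragraph: the H\"older/power-mean argument with $\sum_k \|x_k - v_k\|^2 = O(D^2)$ gives $\sum_{k=1}^T \lambda_k = \Omega\ls T^{3/2}/(L_1 D)\rs$, not $\Omega\ls T^{3/2}/(L_1^{1/2} D^{1/2})\rs$; with the corrected exponent, $\textsc{gap}(\tilde{x}_T) \leq O(D^2)/\sum_k \lambda_k = O\ls L_1 D^3/T^{3/2}\rs$ as claimed, whereas the bound as written would yield $O\ls L_1^{1/2} D^{5/2}/T^{3/2}\rs$ and is inconsistent with your stated conclusion.
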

Note, that condition~\eqref{eq:delta_exact} is verifiable, indicating that the method can adjust to $\delta_k$ in cases of controllable inexactness. Specifically, at each iteration, we can solve subproblem~\eqref{eq:subproblem_cnd}. If the assumption regarding $\delta_k$ is not met, we can improve Jacobian approximation and repeat procedure.\\
{\bf Convergence in nonmonotone setting.}
To begin with, in the nonmonotone case, the subproblem~\eqref{eq:subproblem} may not exhibit monotonicity, and solving~\eqref{eq:subproblem_cnd} becomes challenging~\cite{daskalakis2021complexity}. Yet, in certain specific scenarios, such as unconstrained minimization tasks, it remains feasible to find a solution by leveraging the cubic structure of the subproblem~\cite{cartis2022evaluation}.\\
The following theorem establishes the convergence of \algo\ in the nonmonotone setting.
\begin{theorem}\label{thm:nonmonotone}
    Let Assumptions~\ref{as:smooth_2},~\ref{as:minty},~\ref{as:inexact_jac} hold.
    Then after $T \ge 1$ iterations of Algorithm~\ref{alg:main} with parameters $\beta_k = \delta, \eta=10, \textsf{opt}=2$ we get the following bound
    \begin{equation*}
        \res(\hat x) = \sup_{x \in \XCal} \la F(\hat x_T), \hat x_T - x \ra = O\left(\tfrac{L_1 D^3}{T} + \tfrac{\delta D^2}{\sqrt T} \right).
    \end{equation*}
\end{theorem}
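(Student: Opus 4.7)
My plan is to mirror the dual-extrapolation machinery used for the monotone case, but with two key substitutions: (i) replace the use of monotonicity by the Minty condition applied at the reference point $x^*$, and (ii) turn the final bound into a pointwise bound at $x_{k_T}$ instead of at the averaged iterate, which is why $\textsf{opt}=2$ selects the index minimizing $\|x_k-v_k\|$. The plan splits naturally into three stages: obtaining a telescoping bound on $\sum_k \|x_k-v_k\|^2$, converting this to a small-residual iterate via $k_T$, and turning small $\|x_{k_T}-v_{k_T}\|$ into a residue bound.

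\textbf{Stage 1: bounding $\sum_k \|x_k-v_k\|^2$.} First, I would apply the subproblem condition \eqref{eq:subproblem_cnd} at $x=v_{k+1}$, expand $\Omega^{\eta}_{v_{k+1}}(x_{k+1})$ according to \eqref{eq:model} with $\eta=10$, and use the inexact-Taylor estimate of Lemma~\ref{lm:f_bnd_inxt_2} to get an inequality of the form
\begin{equation*}
    \langle F(x_{k+1}),\, v_{k+1}-x_{k+1}\rangle \;\geq\; 4L_1\|x_{k+1}-v_{k+1}\|^{3} + 8\delta\|x_{k+1}-v_{k+1}\|^{2}.
\end{equation*}
Second, using the dual-averaging optimality of $v_{k+1}$, the standard three-point identity for the squared-Euclidean prox in Algorithm~\ref{alg:main} yields the usual estimate
\begin{equation*}
    \sum_{k=1}^{T} \lambda_k \langle F(x_k),\, v_k-x^{*}\rangle \;\leq\; \tfrac{1}{2}D^{2},
\end{equation*}
which is exactly the bound underlying Theorem~\ref{thm:monotone}. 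Now I split $\langle F(x_k), v_k-x^*\rangle = \langle F(x_k), v_k-x_k\rangle + \langle F(x_k), x_k-x^*\rangle$; Assumption~\ref{as:minty} makes the second term nonnegative, so the displayed bound combined with the subproblem inequality gives $\sum_{k=1}^{T} \lambda_k (4L_1\|x_k-v_k\|^{3}+8\delta\|x_k-v_k\|^{2}) \leq \tfrac{1}{2}D^{2}$. Finally, the lower bound $\lambda_k \geq 1/(32(\tfrac{L_1}{2}\|x_k-v_k\|+\delta))$ with $\beta_k=\delta$ implies, by considering the two cases $\tfrac{L_1}{2}\|x_k-v_k\| \gtrless \delta$, that $\lambda_k(4L_1\|x_k-v_k\|^{3}+8\delta\|x_k-v_k\|^{2}) \geq c\,\|x_k-v_k\|^{2}$ for an absolute constant $c>0$, and therefore $\sum_{k=1}^{T}\|x_k-v_k\|^{2} = O(D^{2})$.

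\textbf{Stage 2 and 3: picking $k_T$ and bounding the residue.} By the choice $k_T=\argmin_k \|x_k-v_k\|$, the sum bound above gives $\|x_{k_T}-v_{k_T}\|^{2} \leq \tfrac{1}{T}\sum_{k}\|x_k-v_k\|^{2} = O(D^{2}/T)$, so $\|x_{k_T}-v_{k_T}\| = O(D/\sqrt{T})$. To translate this into a residue bound, for any $x\in\XCal$ I split
\begin{equation*}
\langle F(x_{k_T}), x_{k_T}-x\rangle = \langle F(x_{k_T})-\Omega^{\eta}_{v_{k_T}}(x_{k_T}),\, x_{k_T}-x\rangle + \langle \Omega^{\eta}_{v_{k_T}}(x_{k_T}),\, x_{k_T}-x\rangle.
\end{equation*}
The second term is controlled by \eqref{eq:subproblem_cnd}; the first term, by Cauchy--Schwarz, the diameter bound, Lemma~\ref{lm:f_bnd_inxt_2}, and the definition \eqref{eq:model} of $\Omega^{\eta}_{v}$ with $\eta=10$, is at most $D\cdot O(L_1\|x_{k_T}-v_{k_T}\|^{2}+\delta\|x_{k_T}-v_{k_T}\|)$. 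Taking the supremum in $x$ and plugging in $\|x_{k_T}-v_{k_T}\|=O(D/\sqrt{T})$ gives the claimed $O(L_1 D^{3}/T + \delta D^{2}/\sqrt{T})$ rate.

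\textbf{Main obstacle.} The only nontrivial step is verifying that the dual-extrapolation upper bound $\sum_k\lambda_k\langle F(x_k), v_k-x^*\rangle \leq \tfrac12 D^2$ still holds in the Euclidean prox setup used here; this is the usual three-point-inequality telescoping and follows the same lines as the proof of Theorem~\ref{thm:monotone}. The rest reduces to carefully tracking constants, splitting the two regimes in the $\lambda_k$ lower bound, and invoking Minty in place of monotonicity at the single point $x^*$.
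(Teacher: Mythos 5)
Your Stages 2 and 3 are correct and coincide with the paper's argument: pick $k_T$ so that $\|x_{k_T}-v_{k_T}\|^2\le\tfrac1T\sum_k\|x_k-v_k\|^2$, then split $F(x_{k_T})=(F(x_{k_T})-\Omega^\eta_{v_{k_T}}(x_{k_T}))+\Omega^\eta_{v_{k_T}}(x_{k_T})$, bound the first piece by $\bigl(3L_1\|x_{k_T}-v_{k_T}\|^2+(\eta+1)\delta\|x_{k_T}-v_{k_T}\|\bigr)D$ via Lemma~\ref{lm:f_bnd_inxt_2} and the definition \eqref{eq:model}, and the second by \eqref{eq:subproblem_cnd}. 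The problem is Stage 1. The inequality you invoke, $\sum_{k=1}^T\lambda_k\la F(x_k),\,v_k-x^*\ra\le\tfrac12 D^2$, is \emph{not} the bound underlying Theorem~\ref{thm:monotone}, and it does not follow from the "usual three-point telescoping." The telescoping of the Lyapunov function $\ECal_k$ (proof of Lemma~\ref{lm:DE-descent}) produces the regret evaluated at the \emph{next} dual point $v_{k+1}$, i.e.\ a bound on $\sum_k\lambda_k\la F(x_k),v_{k+1}-x\ra$ plus the negative terms $-\tfrac12\|v_{k+1}-v_k\|^2$; converting it to a bound at $v_k$ costs $\sum_k\lambda_k\la F(x_k),v_k-v_{k+1}\ra$, which under the assumptions of Theorem~\ref{thm:nonmonotone} can only be absorbed via something like $\tfrac12\sum_k\lambda_k^2\|F(x_k)\|^2$, i.e.\ it needs a zero-order bound on $F$ that is not assumed and would enter the rate. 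Worse, both pieces of your split $\la F(x_k),v_k-x^*\ra=\la F(x_k),v_k-x_k\ra+\la F(x_k),x_k-x^*\ra$ are nonnegative (the first by your own subproblem inequality, the second by Minty), so your claimed bound would in particular force $\sum_k\lambda_k\la F(x_k),v_k-x_k\ra=O(D^2)$, a quantity of order $\sum_k\lambda_k\|F(x_k)\|\|x_k-v_k\|$ for which you give no argument and which the listed assumptions do not yield. So the step you flag as "the main obstacle" and dismiss as routine is exactly where the proposal breaks.

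The repair is the route the paper takes: keep the regret at $x_k$. Lemma~\ref{lm:DE-descent} bounds the cross terms $\lambda_k\la F(x_k),x_k-v_{k+1}\ra-\tfrac12\|v_k-v_{k+1}\|^2$ \emph{inside} the telescoping, using precisely your Stage-1 ingredients (condition \eqref{eq:subproblem_cnd} evaluated at $x=v_{k+1}$, Lemma~\ref{lm:f_bnd_inxt_2}, the stepsize rule with $\beta_k=\delta$, $\eta=10$), and this yields $\sum_k\lambda_k\la F(x_k),x_k-x\ra\le\tfrac12\|x-x_0\|^2-\tfrac18\sum_k\|x_k-v_k\|^2$ for all $x\in\XCal$. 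Applying the Minty condition at $x=x^*$ to this $x_k$-regret gives directly $\sum_k\|x_k-v_k\|^2\le 4\|x^*-x_0\|^2\le 4D^2$ (Lemma~\ref{lm:DE-error}); your separate lower bound $\la F(x_k),v_k-x_k\ra\ge 4L_1\|x_k-v_k\|^3+8\delta\|x_k-v_k\|^2$ and the case analysis on $\lambda_k$ then become unnecessary. With Stage 1 replaced in this way, the remainder of your argument is the paper's proof of Theorem~\ref{thm:nonmonotone} and goes through with the stated $O\ls\tfrac{L_1D^3}{T}+\tfrac{\delta D^2}{\sqrt T}\rs$ rate.
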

The convergence rate could be improved by additional assumption on inexact Jacobian.
\begin{theorem}\label{thm:nonmonotone_exact}
    Let Assumptions~\ref{as:smooth_2},~\ref{as:minty},~\ref{as:inexact_jac} hold. Let $\{x_k, v_k\}$ be iterates generated by Algorithm~\ref{alg:main} that satisfy~\eqref{eq:delta_exact}.
    Then after $T \ge 1$ iterations of Algorithm~\ref{alg:main} with parameters $\beta_k=\tfrac{L}{2}\|x_k - v_k\|, ~\eta=10, \textsf{opt}=2$ we get the following bound
    \begin{equation*}
        \res(\hat x) = \sup_{x \in \XCal} \la F(\hat x_T), \hat x_T - x \ra = O\ls\tfrac{L_1 D^3}{T}\rs.
    \end{equation*}
\end{theorem}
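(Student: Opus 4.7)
The strategy is to rerun the analysis of Theorem~\ref{thm:nonmonotone} essentially verbatim, using condition~\eqref{eq:delta_exact} to upgrade every appearance of the inexactness bound to an $L_1$-scale term, and using the adaptive buffer $\beta_k=\tfrac{L_1}{2}\|x_k-v_k\|$ (instead of the constant $\delta$ used in Theorem~\ref{thm:nonmonotone}) to match this sharper scale. The effect is that the method behaves, on the trajectory, like the exact Perseus algorithm~\cite{lin2024perseus}, so its nonmonotone rate of $O(L_1 D^3/T)$ persists.

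\emph{Sharpened residual.} The first step is to replace Lemma~\ref{lm:f_bnd_inxt_2} by the stronger estimate
\[
\|F(x_k)-\Psi_{v_k}(x_k)\| \;\le\; \tfrac{L_1}{2}\|x_k-v_k\|^{2} + \delta_k\|x_k-v_k\| \;\le\; L_1\|x_k-v_k\|^{2},
\]
which follows from~\eqref{eq:f_bound_2} and the new assumption~\eqref{eq:delta_exact}. As a consequence, every term of the form ``$\delta\|x_k-v_k\|$'' that appears in the proof of Theorem~\ref{thm:nonmonotone} is upgraded to ``$\tfrac{L_1}{2}\|x_k-v_k\|^{2}$'', and the subproblem residual in~\eqref{eq:subproblem_cnd} likewise gains an extra factor of $\|x_{k+1}-v_{k+1}\|$ on its $\delta$-contribution, merging cleanly with the cubic model error that was already present.

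\emph{Adaptive stepsize and min-norm iterate.} With $\beta_k=\tfrac{L_1}{2}\|x_k-v_k\|$, the rule of Algorithm~\ref{alg:main} forces $\lambda_{k+1}=\Theta\!\bigl(1/(L_1\|x_{k+1}-v_{k+1}\|)\bigr)$, exactly the stepsize scaling of the exact Perseus method. Plugging this into the dual-extrapolation potential argument that underlies Theorem~\ref{thm:nonmonotone}, using the Minty condition at $x^{\ast}$ to discard the nonnegative inner product $\la F(x_k), x_k-x^{\ast}\ra$, and telescoping the potential $\psi_k(v)=\tfrac12\|v-x_0\|^{2}-\la s_k,v-x_0\ra$, one gets $\sum_{k=1}^{T}\|x_k-v_k\|^{2}=O(D^{2})$. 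With $\textsf{opt}=2$ this yields $\|x_{k_T}-v_{k_T}\|^{2}=O(D^{2}/T)$. Finally, bounding the residue by the same manipulation as in the proof of Theorem~\ref{thm:nonmonotone},
\[
\res(\hat x) \;\le\; O\!\bigl(L_1 D\,\|x_{k_T}-v_{k_T}\|^{2}\bigr) \;=\; O\!\left(\tfrac{L_1 D^{3}}{T}\right),
\]
where the extra factor of $D$ comes from $\|x_{k_T}-x\|\le D$ when taking the supremum defining $\res$, and any surviving $\delta$-linear tail is absorbed by~\eqref{eq:delta_exact}.

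\emph{Main obstacle.} The most delicate part is the second step: in the nonmonotone regime Lemma~\ref{lm:subproblem_monotone} is unavailable, so the subproblem error~\eqref{eq:subproblem_cnd} must be carried through the potential-function telescoping by hand rather than being neutralized by monotonicity. What saves the argument is exactly~\eqref{eq:delta_exact}: it converts the lone $\delta$-dependent contribution $\delta\|x_{k+1}-v_{k+1}\|^{2}$ inside~\eqref{eq:subproblem_cnd} into a term of order $L_1\|x_{k+1}-v_{k+1}\|^{3}$, which merges with the cubic model error and is neutralized by the $\beta_k$-augmented adaptive stepsize. Showing rigorously that this absorption preserves the telescoping inequality, so that no residual $\delta$-factor appears in the final bound, is the crux of the proof.
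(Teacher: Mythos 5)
Your proposal is correct and takes essentially the same route as the paper: the paper's own proof is literally to rerun the argument of Theorem~\ref{thm:nonmonotone} with every occurrence of $\delta$ replaced by $\delta_k \le \tfrac{L_1}{2}\|x_k - v_k\|$, so that all inexactness terms merge into the $L_1\|x_k-v_k\|^2$ (respectively cubic) terms, the stepsize rule becomes $\lambda_k \asymp 1/(L_1\|x_k-v_k\|)$, and the bound $\|x_{k_T}-v_{k_T}\|^2 \le 4D^2/T$ from Lemma~\ref{lm:DE-error} yields $O(L_1D^3/T)$, exactly as you outline. (One minor remark: Lemma~\ref{lm:DE-descent} never invokes monotonicity of $F$, so the subproblem error is carried through the potential-function telescoping identically in the monotone and nonmonotone settings; the only genuinely nonmonotone difficulty is the computational solvability of condition~\eqref{eq:subproblem_cnd}, which the theorem simply assumes.)
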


\section{The lower bound}\label{sec:lb}
In this section, we establish a theoretical lower bound for the complexity of first-order algorithms using inexact Jacobians for monotone MVIs. 
The proof technique draws inspiration from works~\cite{devolder2014first, agafonov2023advancing} and based on lower bounds from~\cite{lin2024perseus, ouyang2021lower}.\\ 
We start by describing the available information and the method's structure. The considered class of algorithms relies on data provided by a first-order  $\delta$-inexact oracle, denoted as $\mathcal{O}: \XCal \to \R^d \times \R^{d\times d}$. Given a point $\bar{x} \in \XCal$, the oracle returns 
\begin{equation}
    \label{eq:oracle}
    \mathcal{O}(\bar{x}) = \ls F(\bar{x}), J(\bar{x})\rs, \text{  such that Assumption~\ref{as:inexact_jac} holds.}
\end{equation}
 The method is able to generate points $\lb x_k\rb_{k \geq 0}$ that satisfy the following condition
\begin{equation*}
    \begin{array}{l}
        s \in \textnormal{Lin}(F(x_0), \ldots, F(x_k)), \qquad \bar{x} = \argmax_{x \in \XCal} \{\langle s, x-x_0\rangle - \frac{1}{2}\|x-x_0\|^2\}, \\
        x_{k+1} \in \XCal \textnormal{ satisfies that } \langle \Omega_{\bar{x}}(x_{k+1} - x_k), x - x_{k+1} \rangle \geq 0 \textnormal{ for all } x \in \XCal, 
    \end{array}
\end{equation*}
where $\Omega_{\bar{x}}(h) = a_1 F(\bar{x}) + a_2 J(\bar{x})[h] + b_1h + b_2\|h\|h$. \\
Next, we state the primary assumption concerning the method's ability to generate new points.
\begin{assumption}
    \label{as:lower_bound}
    The method generates a recursive sequence of iterates $\lb x_k\rb_{k \geq 0}$ that satisfies the following condition: for all $k \geq 0$, we have that $x_{k+1} \in \XCal$ satisfies that $\langle \Omega_{\bar{x}}(x_{k+1} - x_k), x - x_{k+1} \rangle \geq 0$ for all $x \in \XCal$, where 
    \begin{equation*}
        \bar{x} = \argmax_{x \in \XCal} \{\langle s, x-x_0\rangle - \tfrac{1}{2}\|x-x_0\|^2\} \textnormal{ and } s \in \textnormal{Lin}(F(x_0), \ldots, F(x_k)).  
    \end{equation*}
\end{assumption}
As highlighted in~\cite{lin2024perseus}, Assumption~\ref{as:lower_bound} is suitably satisfied by various dual extrapolation methods. However, it might not be applicable to alternative methods for variational inequalities, such as extragradient methods and their variants. We leave the generalization of inexact lower bounds for these algorithms to future research, as even lower bounds for exact algorithms~\cite{adil2022optimal, lin2024perseus} do not address this case. Now, let us introduce the generalization of smoothness
\begin{assumption}\label{as:smooth_p}
    The operator $F(x)$ is $i$-th-order $L_{i}$-smooth ($i \geq 0$), if it has Lipschitz-continuous $i$-th-order derivative 
    \begin{equation}\label{eq:smooth_p}
        \|\nabla^{i} F(x) -  \nabla^{i} F(y)\|_\op \leq L_i\|x - y\|, \quad \textnormal{for all } x, y \in \XCal.  
    \end{equation}
\end{assumption}
Finally, we present the lower bound theorem for first-order methods with inexact Jacobians.
\begin{theorem}\label{thm:lower_bound}
            Let some first-order method $\mathcal{M}$ satisfy Assumption \ref{as:lower_bound} and have access only $\delta$-inexact first-order oracle~\ref{eq:oracle}. Assume the method $\mathcal{M}$ ensures for any $L_0$-zero-order smooth and $L_1$-first-order smooth monotone operator $F$ the following convergence rate 
            \begin{equation}
            \label{eq:lower_bound_convergence_stoh}
               \textsc{gap}(\hat{x})
               \leq O(1) \max \lb \tfrac{\delta D^{2}}{\Xi_1(T)};  
                \tfrac{L_1D^{3}}{\Xi_2(T)}\rb.
            \end{equation}     
            Then for all $T\geq 1$ we have
            \begin{equation}
            \label{eq:lower_bound_convergence_powers}
                \Xi_1 (T) \leq T, \qquad \Xi_2 (T) \leq T^{3/2}.
            \end{equation}
\end{theorem}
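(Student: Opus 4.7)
The plan is to establish the two bounds separately by constructing two different adversarial instances (and choices of $\delta$-inexact oracle) that each saturate a known lower bound from the literature. This mirrors the technique used in~\cite{devolder2014first,agafonov2023advancing} for inexact Hessian methods in minimization: one instance forces the exact second-order term $\Xi_2(T)$ to be tight, and another forces the $\delta$-dependent first-order term $\Xi_1(T)$ to be tight. The key leverage is that the assumed upper bound~\eqref{eq:lower_bound_convergence_stoh} must hold uniformly over all admissible operators and all oracle realizations consistent with Assumption~\ref{as:inexact_jac}, so exhibiting a single adversarial pair for each regime is enough.

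\textbf{Bounding $\Xi_2(T) \leq T^{3/2}$.} Here I would take $\delta = 0$, so Assumption~\ref{as:inexact_jac} forces the oracle to return the exact Jacobian $J(\bar x)=\nabla F(\bar x)$. Under Assumption~\ref{as:lower_bound}, the method $\mathcal M$ then becomes a valid exact second-order method for monotone MVIs in the sense of~\cite{lin2024perseus}. Invoking their $\Omega(L_1 D^3 / T^{3/2})$ lower bound gives an operator $F$ (monotone, $L_1$-smooth) on a bounded $\XCal$ of diameter $D$ on which any $T$-iteration execution of $\mathcal M$ satisfies $\textsc{gap}(\hat x)\geq \Omega(L_1 D^3 / T^{3/2})$. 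Combining with~\eqref{eq:lower_bound_convergence_stoh} at $\delta=0$ yields $L_1 D^3 / T^{3/2} \lesssim L_1 D^3 / \Xi_2(T)$, i.e.\ $\Xi_2(T) \leq T^{3/2}$.

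\textbf{Bounding $\Xi_1(T) \leq T$.} For the Jacobian-noise term I would take the Ouyang--Xu first-order hard instance~\cite{ouyang2021lower}: a bilinear monotone operator $F_\delta(x) = A x - b$ on a suitably chosen bounded set of diameter $D$, where the matrix $A$ is scaled so that $\|A\|_\op = \delta$. Since $F_\delta$ is affine, $\nabla F_\delta \equiv A$ is constant, so Assumption~\ref{as:smooth_2} holds with $L_1 = 0$ (and zero-order Lipschitz constant controlled by $\delta D$). Now define the adversarial oracle by returning $J(\bar x) = 0$ for every query; this is a valid $\delta$-inexact Jacobian since $\|\nabla F_\delta(\bar x) - 0\|_\op = \|A\|_\op \leq \delta$. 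Under this oracle, the update rule $\Omega_{\bar x}(h) = a_1 F(\bar x) + a_2 J(\bar x)[h] + b_1 h + b_2\|h\|h$ from Assumption~\ref{as:lower_bound} collapses to a Jacobian-free update, and by construction the dual state $s$ still lives in $\mathrm{Lin}(F(x_0),\dots,F(x_k))$; thus $\mathcal M$ reduces to an algorithm in the Ouyang--Xu first-order class. Their lower bound then gives $\textsc{gap}(\hat x)\geq \Omega(\delta D^2 / T)$, and comparing with~\eqref{eq:lower_bound_convergence_stoh} (the second term is vacuous because $L_1=0$) yields $\delta D^2/T \lesssim \delta D^2/\Xi_1(T)$, hence $\Xi_1(T) \leq T$.

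\textbf{Main obstacle.} The technically delicate step is the second one: checking that restricting the adversarial oracle to $J\equiv 0$ really places $\mathcal M$ inside the class covered by the Ouyang--Xu first-order lower bound. I would have to verify that the subspace generated by the iterates $\{x_k\}$ lies in an affine Krylov-type subspace spanned by $\{F(x_i)\}_{i\le k}$ (and possibly differences thereof), exactly as required in~\cite{ouyang2021lower}. Because $\Omega_{\bar x}(h)$ includes the nonlinear regularizer $b_2\|h\|h$, the VI solved for $x_{k+1}$ is not literally linear in the oracle outputs, but the zero-chain / ``resisting oracle'' construction of Ouyang--Xu only requires that each new iterate introduce at most one additional nonzero coordinate beyond the span of previously queried $F$-values, which follows from the standard rotational invariance argument applied to $F_\delta$ on the restricted coordinate subspaces. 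Once that embedding is justified, both lower bounds plug in cleanly and~\eqref{eq:lower_bound_convergence_powers} follows.
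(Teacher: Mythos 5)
Your proposal is correct and follows essentially the same route as the paper: the $\Xi_2$ bound comes from setting $\delta=0$ and invoking the exact second-order lower bound of \citet{lin2024perseus}, while the $\Xi_1$ bound comes from the bilinear Ouyang--Xu instance (where $L_1=0$) with a deliberately uninformative $\delta$-inexact Jacobian, so the method degenerates to the first-order class and its $\Omega(L_0 D^2/T)$ lower bound applies. The only cosmetic difference is your choice $J\equiv 0$ with $\|A\|_\op=\delta$ versus the paper's $J(x)=L_0 I_{d\times d}$ on the same hard instance — both make the Jacobian information useless in the update of Assumption~\ref{as:lower_bound} and yield the identical contradiction.
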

\section{Quasi-Newton Approximation}\label{sec:Quasi-Newton}

In this section, inspired by Quasi-Newton (QN) methods for Hessian approximation, we propose QN approximations for Jacobians. Our goal is to create a simple scheme to approximate the first-order derivative and thereby reduce the complexity of the subproblem. We compute $J_x$ using a QN update and use it as an inexact Jacobian in the model $\Omega^\eta_v(x)$~\eqref{eq:model}.
\begin{equation}
\label{eq:low_rank}
    J_x = J^r = J^0 + \textstyle{\sum}_{i=0}^{r-1} c_i u_i v_i^{\top} =J^{0} + U^{\top}CV,
\end{equation}
where $r$ is a rank of approximation, $J^0 \in \R^{d\times d}\succeq 0, u_i\in \R^{d}, v_i \in \R^{d}$ are known. $U\in \R^{r \times d}$ and $V\in \R^{r \times d}$ are matrices of stacked vectors $U=[u_0,\ldots, u_{r-1}]$ and $V =[v_0,\ldots, v_{r-1}]$ and $C\in \R^{r \times r}$ is a diagonal matrix $C=\text{diag}([c_0, \ldots, c_{r-1}])$. If $u_i=v_i$ the update becomes symmetric.\\
\textbf{L-Broyden} is a non-symmetric variant of QN approximation \citep{han1998newton, fernandez2013quasi} of the following form
\begin{equation}
\label{eq:l-broyd}
J^{i+1} = J^{i} +  \tfrac{(y_i - J^{i} s_i) s_i^{\top}}{s_i^{\top} s_i}, \quad \forall i =0,\ldots, m-1.
\end{equation}
In a view of~\eqref{eq:low_rank}, this update is obtained by setting  $u_i = y_i - J^{i} s_i$, $v_i = s_i$, $c_i = 1/(s_i^{\top} s_i)$, and the rank $r = m$ is equal to memory size $m$.\\
\textbf{\textit{Damped} L-Broyden} is another option for QN approximation with non-symmetric damped update
\begin{equation}
\label{eq:l-broyd_damped}
J^{i+1} = J^{i} +  \tfrac{1}{m+1}\tfrac{(y_i - J^{i} s_i) s_i^{\top}}{s_i^{\top} s_i}, \quad \forall i =0,\ldots, m-1.
\end{equation}
By choosing $u_i = y_i - J^{i} s_i$, $v_i = s_i$, $c_i = 1/((m+1)s_i^{\top} s_i)$, $r=m$, we derive this update from~\eqref{eq:low_rank}.\\
We define the matrix $J^{r}(J^0,U,V, C)=J^{r}(J^0,Y,S, C)$, where $Y$ and $S$ are formed by stacking the vectors $[y_0, \ldots, y_{m-1}]$ and $[s_0, \ldots, s_{m-1}]$. The matrix $J^{m}(J^0, Y, S)$ can be computed for any given pair ($Y$, $S$). Next, we describe two strategies for the choice of $(s, y)$ pairs used in~\eqref{eq:l-broyd},~\eqref{eq:l-broyd_damped}.\\
 \textbf{\QN with operator history} is the well-known classic variant where operator differences are stored:
 \begin{equation*}
     \label{eq:history}
     s_i = z_{i+1}-z_{i},\qquad
     y_i = F(z_{i+1}) - F(z_{i}).
 \end{equation*}
This approach is computationally efficient as it does not require additional operator calculations.\\ 
\textbf{\QN with JVP sampling} is based on fast computation of Jacobian-Vector Products (JVP):
 \begin{equation*}
     \label{eq:sample}
     y_i = \nabla F(x) s_i,
 \end{equation*}
where $s_i$ are random vectors uniformly distributed on the unit sphere such that $\|s_i\|=1$ and $s_0,\ldots,s_{m-1}$ are linearly independent. Note, for $m\ll d$, each $s_i$ is linearly independent with high probability.  This approach requires only $m$ operator/JVP computations per step, which is considerably fewer than the $d$ JVPs needed for a full Jacobian. Utilizing the current Jacobian information allows to improve the accuracy of the approximation. \\
In the following theorem, we demonstrate that these approximations satisfy Assumption \ref{as:inexact_jac} and condition \eqref{eq:inexact_jac} for both the QN with operator history and JVP sampling methods.
\begin{theorem}
    \label{thm:broyd}
    Let $F(x)$ be $L_0$-zero-order smooth operator. For $m$-memory L-Broyden approximation of the Jacobian $J_x=J^m$ defined iteratively by \eqref{eq:l-broyd} with $0 \preceq J^0\preceq L_0 I$, we have 
    $\delta \leq (m+2) L_0.$ For $m$-memory \textit{Damped }L-Broyden approximation $J_x=J^m$ of the Jacobian defined iteratively by \eqref{eq:l-broyd_damped} with $0 \preceq J^0\preceq \tfrac{L_0}{m+1} I$, the condition 
    $\delta \leq 2 L_0$ holds true.
\end{theorem}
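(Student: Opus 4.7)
The key observation is that both variants of the update can be rewritten as
\[
J^{i+1} \;=\; J^{i}\!\left(I - \alpha \tfrac{s_i s_i^\T}{s_i^\T s_i}\right) + \alpha \tfrac{y_i s_i^\T}{s_i^\T s_i},
\]
with $\alpha = 1$ for the L-Broyden update \eqref{eq:l-broyd} and $\alpha = \tfrac{1}{m+1}$ for the damped update \eqref{eq:l-broyd_damped}. The matrix $I - \alpha \tfrac{s_i s_i^\T}{s_i^\T s_i}$ has eigenvalues $1$ (with multiplicity $d-1$ on $s_i^\perp$) and $1-\alpha$ (in the direction $s_i$). Since $\alpha \in (0,1]$ in both cases, its operator norm is exactly $1$. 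Therefore the submultiplicativity of $\|\cdot\|_\op$ and triangle inequality give the recursion
\[
\|J^{i+1}\|_\op \;\le\; \|J^{i}\|_\op + \alpha\, \tfrac{\|y_i\|\,\|s_i\|}{\|s_i\|^2} \;=\; \|J^{i}\|_\op + \alpha\,\tfrac{\|y_i\|}{\|s_i\|}.
\]

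Next I would bound $\|y_i\|/\|s_i\|$ uniformly by $L_0$. For JVP sampling $y_i = \nabla F(x)\, s_i$, so $\|y_i\| \le \|\nabla F(x)\|_\op\,\|s_i\| \le L_0 \|s_i\|$, since $L_0$-zero-order smoothness (Lipschitzness of $F$) implies $\|\nabla F(x)\|_\op \le L_0$ wherever the Jacobian exists. For the operator-history variant, $y_i = F(z_{i+1}) - F(z_i)$ and $s_i = z_{i+1}-z_i$, so directly $\|y_i\| \le L_0 \|s_i\|$. In both cases $\|y_i\|/\|s_i\| \le L_0$, so the recursion simplifies to $\|J^{i+1}\|_\op \le \|J^{i}\|_\op + \alpha L_0$.

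Iterating $m$ times from the initial condition gives the two cases. For L-Broyden ($\alpha = 1$, $\|J^0\|_\op \le L_0$) induction yields $\|J^m\|_\op \le (m+1) L_0$, and the triangle inequality
\[
\|\nabla F(x) - J^m\|_\op \;\le\; \|\nabla F(x)\|_\op + \|J^m\|_\op \;\le\; L_0 + (m+1) L_0 \;=\; (m+2) L_0,
\]
produces $\delta \le (m+2) L_0$. For the damped variant ($\alpha = \tfrac{1}{m+1}$, $\|J^0\|_\op \le \tfrac{L_0}{m+1}$), the same induction gives $\|J^m\|_\op \le \tfrac{L_0}{m+1} + m \cdot \tfrac{L_0}{m+1} = L_0$, so $\|\nabla F(x) - J^m\|_\op \le 2 L_0$, i.e.\ $\delta \le 2L_0$.

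The only subtlety to check is that the initialization hypotheses $0 \preceq J^0 \preceq L_0 I$ (resp.\ $0 \preceq J^0 \preceq \tfrac{L_0}{m+1} I$) indeed give $\|J^0\|_\op \le L_0$ (resp.\ $\le \tfrac{L_0}{m+1}$); this is immediate from the symmetry of the semidefinite ordering. There is no real obstacle — the whole argument is a clean three-line induction once one identifies the correct splitting of the update into a projection-like contraction plus a rank-one correction of controlled norm; the dampening factor $\tfrac{1}{m+1}$ is precisely what converts the $O(m)$ error accumulation into a constant.
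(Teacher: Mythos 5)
Your proof is correct and follows essentially the same route as the paper's: rewrite the update as $J^{i}$ times a rank-one perturbation of the identity (a contraction of operator norm $1$) plus a rank-one correction of norm at most $\alpha L_0$, apply submultiplicativity and the triangle inequality to get the additive recursion, iterate, and finish with $\|\nabla F(x) - J^m\|_\op \le \|\nabla F(x)\|_\op + \|J^m\|_\op$. The only difference is the cosmetic one of parameterizing by $\alpha \in (0,1]$ rather than its reciprocal; otherwise the argument, including the handling of both the operator-history and JVP-sampling variants, matches the paper's.
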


With the primary toolkit for QN approximation in VIs established, we can now discuss efficient way of solving the subproblem~\eqref{eq:subproblem}, which takes the following form
\begin{equation}
\label{eq:subproblem_1}
    \textnormal{find } y \in \XCal \textnormal{ such that } 
    \langle F(x) + (J_x + \eta \delta I + 5L_1\|y-x\|I)(y-x), z - y \rangle \geq 0 \textnormal{ for all } z \in \XCal.
\end{equation}
Let us introduce a parameter $\tau = \|y - x\|$ for a segment search in $\tau \in [0; D]$. 
To solve \eqref{eq:subproblem_1}, we consider another problem
\begin{equation}
\label{eq:subproblem_2}
    \textnormal{find } y_{\tau} \in \XCal \textnormal{ such that } 
    \langle A_{\tau}^{-1}F(x) + y_{\tau}-x, z - y_{\tau} \rangle \geq 0 \textnormal{ for all } z \in \XCal,
\end{equation}
where $A_{\tau} = J_x + (\eta \delta + 5L_1\tau) I$. Problems \eqref{eq:subproblem_1} and \eqref{eq:subproblem_2} are equivalent when $\tau = \|y_{\tau} - x\|$. The subproblem~\eqref{eq:subproblem_2} can be reformulated as minimization problem
\begin{equation}
\label{eq:subproblem_3}
    y_{\tau} = \argmin\limits_{y\in \XCal} \lb \la A_{\tau}^{-1}F(x), y-x\ra + \tfrac{1}{2} \|y-x\|^2\rb,
\end{equation}
where \eqref{eq:subproblem_2} is an optimality condition for \eqref{eq:subproblem_3}. The goal is to find $y_{\tau}$ such that $\upsilon(\tau) = \left|\tau - \|y_{\tau} - x\| \right|\leq \varepsilon$. As $\upsilon(\tau)$ is a continuous function of $\tau$, we can find this solution via bisection segment-search with $\log_2 \tfrac{D}{\varepsilon}$ iterations. This ray-search procedure is similar to the subproblem solution for the Cubic Regularized Newton subproblem.\\
For $r$-rank QN approximation $J^r$ from \eqref{eq:low_rank}, we can effectively compute $A_{\tau}^{-1}F(x)$, where $A_{\tau} = J^r + (\eta \delta + \tfrac{5L}{2}\tau) I = U^{\top}CV + J^0 + (\eta \delta + \tfrac{5L}{2}\tau) I = U^{\top}CV + B $ and $B = J^0 + (\eta \delta + \tfrac{5L}{2}\tau) I$ by using the Woodbury matrix identity\cite{woodbury1949stability,woodbury1950inverting}.
\begin{equation*}
A_{\tau}^{-1}F(x) = \ls B + U^{\top}CV \rs^{-1}F(x) = B^{-1}F(x) - B^{-1}U^{\top}(C^{-1} + V B^{-1}U^{\top})^{-1}V B^{-1}F(x).
\end{equation*}
For computational efficiency, it is better to choose $J^0$ as a diagonal matrix, then inversion $B^{-1}$ is computed by $O(d)$ arithmetical operations, $C^{-1}$ by $O(r)$ operations. $VB^{-1}U^{\top}$ requires $O(r^2 d)$ for classical multiplication and can be improved by fast matrix multiplication. $(C^{-1}+VB^{-1}U^{\top})^{-1}$ can be computed by $O(r^3)$, as an inverse of $r$-rank matrix. The rest of the operations are cheaper. Thus, the total number of arithmetic operations is $O(r^2d)$ instead of $O(d^3)$ for Jacobian inversion. We need to perform this inversion logarithmic number of times. 
Therefore, the total computational cost with the segment-search procedure is $\tilde{O}\ls r^2d + r^3 \log_2(\tfrac{D}{\varepsilon})\rs$.

\section{Strongly monotone setting} \label{sec:strongly}

\begin{assumption}\label{as:strongly monotone}
    The operator $F: \R^d \to \R^d$ is called strongly monotone if there exists a constant $\mu > 0$ such that
    \begin{equation}\label{eq:strongly monotone}
        \la F(x) - F(y), x - y \ra \ge \mu \|x - y\|^2,\quad \textnormal{for all } x, y \in \XCal .
    \end{equation}
\end{assumption}
To leverage the strong monotonicity of the objective function and achieve a linear convergence rate, we introduce the restarted version of Algorithm~\ref{alg:main} dubbed as \algorestart. Restart techniques are widely utilized in optimization and typically preserve optimality. In other words, restarting an optimal method for convex functions or monotone problems effectively transforms it into an optimal method for strongly convex or strongly monotone problems. Within iteration of \algorestart\, listed as Algorithm~\ref{alg:main restarted}, we execute \algo\ for a predefined number of iterations~\eqref{eq:restart_steps}. Next, the output of this run is used as initial point for next run of \algo\, with parameters reset, and this iterative process continues.
\begin{algorithm}[!t]
\begin{algorithmic}\caption{\textsf{\algorestart}}
    \label{alg:main restarted}
        \STATE \textbf{Input:} initial point $z_0 \in \XCal$, $D = \max_{x, y \in \XCal}$, parameters $L$, $\delta$.
        \STATE \textbf{Initialization}: $n = \nrestarts$, $R=D$.
        \FOR{i=1, ..., n}
            \STATE Set $x_0 = z_{i-1}$, $R_{i-1} = \frac{R}{2^{i-1}}$.
            
            \STATE Run Algorithm \ref{alg:main} with $\beta_k = \delta, ~\eta=10,~\textsf{opt}=0$ for $T_i$ iterations, where 
            \begin{equation}
                \label{eq:restart_steps}
                T_i = O(1)\left\lceil 
                    \max \left\{  
                        \tfrac{L_1^{2/3} R_{i-1}^{2/3}}{\mu^{2/3}},
                        \tfrac{\delta}{\mu}
                    \right\} 
                    \right\rceil.
            \end{equation}
            \STATE Set $z_i = x_{T_i}$.
        \ENDFOR
    \end{algorithmic}
\end{algorithm}
\begin{theorem}\label{thm:strongly monotone restarts}
    Let Assumptions~\ref{as:smooth_2},~\ref{as:inexact_jac},~\ref{as:strongly monotone} hold. Then the total number of iterations of Algorithm~\ref{alg:main restarted} to reach desired accuracy $\|z_s - x^*\| \le \e$, where $x^*$ is the solution of~\eqref{eq:strong_problem}  is
    \[
        O \ls \ls \tfrac{L_1D}{\mu} \rs^\frac{2}{3} + \ls \tfrac{\delta}{\mu} + 1 \rs \nrestarts \rs.
    \]
\end{theorem}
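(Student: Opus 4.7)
\textbf{Proof plan for Theorem~\ref{thm:strongly monotone restarts}.}

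The strategy is the standard restart scheme: convert the sublinear gap guarantee of Theorem~\ref{thm:monotone} into a geometric contraction of the distance to the solution via strong monotonicity, then sum the per-phase iteration counts.

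\emph{Step 1: From gap to distance.} I first establish the auxiliary inequality
\[
\|\hat{x} - x^*\|^2 \;\le\; \tfrac{4}{\mu}\,\textsc{gap}(\hat{x})
\qquad \text{for all } \hat{x}\in\XCal.
\]
For each $\alpha\in[0,1]$ the point $x_\alpha=x^*+\alpha(\hat{x}-x^*)$ lies in $\XCal$ by convexity. Strong monotonicity gives $\langle F(x_\alpha)-F(x^*),x_\alpha-x^*\rangle\ge\mu\|x_\alpha-x^*\|^2$, and combining this with the Stampacchia inequality $\langle F(x^*),\hat{x}-x^*\rangle\ge 0$ yields $\langle F(x_\alpha),\hat{x}-x_\alpha\rangle\ge \mu\alpha(1-\alpha)\|\hat{x}-x^*\|^2$. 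Taking $\alpha=1/2$ and using the definition of the gap gives the claim.

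\emph{Step 2: Contraction per phase.} Let $R_{i-1}$ denote a bound on $\|z_{i-1}-x^*\|$, with $R_0=D$. A straightforward inspection of the proof of Theorem~\ref{thm:monotone} shows that the relevant quantity in the gap bound is the distance from the initial iterate to the reference point $x^*$ (this is what appears in the telescoping argument from the dual extrapolation prox term $\tfrac12\|v-x_0\|^2$), so starting the inner run at $x_0=z_{i-1}$ one has
\[
\textsc{gap}(z_i) \;\le\; c\!\left(\tfrac{L_1 R_{i-1}^3}{T_i^{3/2}}+\tfrac{\delta R_{i-1}^2}{T_i}\right).
\]
Combining with Step~1,
\[
\|z_i-x^*\|^2 \;\le\; \tfrac{4c}{\mu}\!\left(\tfrac{L_1 R_{i-1}^3}{T_i^{3/2}}+\tfrac{\delta R_{i-1}^2}{T_i}\right).
\]
Choosing $T_i$ as in~\eqref{eq:restart_steps} makes each term on the right at most $R_{i-1}^2/8$, hence $\|z_i-x^*\|\le R_{i-1}/2=:R_i$. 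By induction on $i$, after $n=\lceil\log(D/\e)\rceil$ phases we have $\|z_n-x^*\|\le D\cdot 2^{-n}\le\e$.

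\emph{Step 3: Summing iterations.} The total number of oracle calls is
\[
\textstyle\sum_{i=1}^{n} T_i \;=\; O\!\left(\sum_{i=1}^{n}\left(\tfrac{L_1 R_{i-1}}{\mu}\right)^{2/3}\right) + O\!\left(n\!\left(\tfrac{\delta}{\mu}+1\right)\right).
\]
Because $R_{i-1}=D\cdot 2^{-(i-1)}$, the first sum is a geometric series with ratio $2^{-2/3}<1$ and is bounded by $O\bigl((L_1 D/\mu)^{2/3}\bigr)$. The second piece (arising from the $\delta/\mu$ contribution and the rounding-up by one inside the ceiling in~\eqref{eq:restart_steps}) gives $O\bigl((\delta/\mu+1)\lceil\log(D/\e)\rceil\bigr)$. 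Adding the two yields exactly the bound claimed in the theorem.

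\emph{Main obstacle.} The only nontrivial point is Step~2: one must justify that the inner guarantee of Theorem~\ref{thm:monotone} applied on phase $i$ can be phrased with $R_{i-1}=\|z_{i-1}-x^*\|$ (the effective ``radius'' after the previous phase) in place of the global diameter $D$. This requires either running \algo\ on $\XCal$ intersected with a ball of radius $R_{i-1}$ around $z_{i-1}$ while verifying that $x^*$ remains feasible, or, more cleanly, tracing the proof of Theorem~\ref{thm:monotone} to note that the bound is driven by $\tfrac12\|x^*-x_0\|^2$ rather than by $D^2$. Once this reduction is in place, everything else is bookkeeping.
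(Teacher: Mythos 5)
Your Step 1 (the inequality $\|\hat{x}-x^*\|^2 \le \tfrac{4}{\mu}\,\textsc{gap}(\hat{x})$ via the midpoint $x_{1/2}$) is correct, and your Step 3 bookkeeping matches the paper. The genuine gap is exactly the point you flag in Step 2 and then leave unresolved: the claim $\textsc{gap}(z_i) \le c\bigl(L_1 R_{i-1}^3/T_i^{3/2} + \delta R_{i-1}^2/T_i\bigr)$ does \emph{not} follow from "tracing the proof of Theorem~\ref{thm:monotone}." Tracing that proof gives the per-point bound $\langle F(x), \tilde{x}_T - x\rangle \le \|x-x_0\|^2/(2\sum_k\lambda_k)$ together with $1/\sum_k\lambda_k = O\bigl(L_1\|x^*-x_0\|/T^{3/2} + \delta/T\bigr)$ from Lemma~\ref{lm:DE-control}; only the factor $\|x^*-x_0\|$ inside the step-size control shrinks with the phase, while the supremum over $x\in\XCal$ in the gap still forces $\|x-x_0\|^2\le D^2$. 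The honestly traced per-phase bound is therefore $O\bigl(L_1 D^2 R_{i-1}/T_i^{3/2} + \delta D^2/T_i\bigr)$, and pushing it through your Step 1 requires $T_i \gtrsim \delta D^2/(\mu R_{i-1}^2)$, which grows like $4^i$ and yields a $\tfrac{\delta}{\mu}(D/\e)^2$ total instead of $\tfrac{\delta}{\mu}\log\tfrac{D}{\e}$. Your alternative fix (running \algo\ on $\XCal$ intersected with a ball of radius $R_{i-1}$) would change Algorithm~\ref{alg:main restarted}, so it proves a statement about a different method.

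The paper closes this hole by never using the gap function at all: it bounds $\mu\|\tilde{x}_{T_i}-x^*\|^2$ directly via Jensen's inequality over the weighted average, strong monotonicity applied to each iterate $x_k$ against $x^*$, and $\langle F(x^*), x_k - x^*\rangle\ge 0$, which reduces everything to the single instance $x=x^*$ of Lemma~\ref{lm:DE-error}, namely $\sum_k\lambda_k\langle F(x_k), x_k-x^*\rangle \le \tfrac12\|x_0-x^*\|^2$. Then the only distance appearing anywhere is $\|x_0-x^*\|\le R_{i-1}$, and the contraction with $T_i$ as in~\eqref{eq:restart_steps} follows immediately. Your scheme could be repaired in the same spirit by replacing the full gap in Step 1 with the per-point inequality evaluated at $x_{1/2}$ and additionally proving $\|\tilde{x}_{T_i}-x_0\| = O(R_{i-1})$ (so that $\|x_{1/2}-x_0\| = O(R_{i-1})$), but that auxiliary bound is neither stated nor proved in your proposal, so as written the key contraction step does not go through.
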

\section{Tensor generalization}\label{sec:tensor}

    In this section, we generalize the results presented in Section \ref{sec:method} to the $p$-th order case. We consider a higher-order method with inexact high-order derivatives, satisfying the following assumption.
    \begin{assumption}\label{as:tensor:inexact}
        For all $x, v \in \XCal,\ i\geq 1$, $i$-th inexact derivative of $F$, which we denote as $G_i$, satisfies
        \begin{equation}\label{eq:tensor:inexact}
            \left\| \ls \nabla^i F(v) - G_i(v) \rs [x-v]^{i-1} \right\| \le \delta_i \|x - v\|^{i-1}.
        \end{equation}
    \end{assumption}
    
    Based  on inexact $(p-1)$-th-order Taylor approximation $\Psi_{p, v}(x) = F(v) +  \textstyle{\sum}_{i=1}^{p-1} \frac{1}{i!} \nabla^i G_i(v)[x - v]^i$, we introduce the inexact tensor model $\Omega_{p, v}(x)$ of the objective
    \begin{gather}
        \Omega_{p, v}(x) = \Psi_{p, v}(x) + \textstyle{\sum}_{i=1}^{p-1} \tfrac{\eta_i \delta_i}{i!} \| x - v \|^{i-1} (x - v) + \tfrac{5 \Lp}{(p-1)!} \| x - v \|^{p-1} (x-v). \label{eq:tensor:model}
    \end{gather}
    The tensor generalization of Algorithm~\ref{alg:main}, referred to as \algotensor\ (High-order Method for {\bf V}ariational {\bf I}nequalitues under {\bf H}igh-order derivatives {\bf I}nexactness
) and detailed in Appendix~\ref{app:tensor}, involves the inexact solution of the subproblem, which satisfies the following condition:
    \begin{equation*}
        \label{eq:subproblem_tensor_cnd}
        \sup_{x \in \XCal}  \langle \Omega_{v_{k+1}}(x_{k+1}), x_{k+1} - x\rangle \leq \tfrac{L_{p-1}}{p!}\|x_{k+1} - v_{k+1}\|^{p+1} + + \textstyle{\sum}_{i=1}^{p-1} \tfrac{\delta_i}{i!}\|x_{k+1} - v_{k+1}\|^{i+1}.
    \end{equation*}
    Another difference in \algotensor\ compared to \algo\ (Algorithm~\ref{alg:main}) is the adaptive strategy for $\lambda_{k+1}$:
    \begin{equation*}
        \tfrac{1}{4(5p-2)} \leq \lambda_k \ls \tfrac{L_{p-1}}{p!}\|x_{k+1} - v_{k+1}\|^{p+1} + \textstyle{\sum}_{i=1}^{p-1} \tfrac{\delta_i}{i!}\|x_{k+1} - v_{k+1}\|^{i+1} \rs\leq \tfrac{1}{2(5p+1)}.
    \end{equation*}
    The other steps of Algorithm~\ref{alg:main} remain unchanged for the higher-order method. Now, we are ready to present the convergence properties of \algotensor.
    \begin{theorem}\label{thm:tensor:monotone}
        Let Assumptions~\ref{as:monotone},~\ref{as:smooth_p} with $i=p-1$, and~\ref{as:tensor:inexact}  hold.
        Then, after $T \geq 1$ iterations of \algotensor\ with parameters $\eta_i=5p$, $\myopt=0$, we get the following bound
        \[
            \textsc{gap}(\tilde{x}_T) = \sup_{x \in \XCal} \ \langle F(x), \tilde{x}_T - x\rangle \leq 
            O \ls \tfrac{\Lp D^{p+1}}{T^\frac{p + 1}{2}} + \textstyle{\sum}_{i=1}^{p-1} \tfrac{\delta_i D^{i+1}}{T^\frac{i+1}{2}} \rs.
        \]
    \end{theorem}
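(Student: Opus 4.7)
The plan is to lift the dual-extrapolation argument of Theorem~\ref{thm:monotone} from the second-order case to general $p$-th order, keeping the same three-part structure: a Taylor error bound, verification of monotonicity of the regularized subproblem, and a dual-extrapolation recursion closed off with an adaptive-stepsize-based lower bound on $\sum_k \lambda_k$.

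First I would establish the $p$-th order analogue of Lemma~\ref{lm:f_bnd_inxt_2}. Combining the classical Taylor remainder estimate under $L_{p-1}$-smoothness (Assumption~\ref{as:smooth_p} with $i = p-1$) with the triangle inequality and the termwise inexactness bound~\eqref{eq:tensor:inexact} applied for $i = 1, \ldots, p-1$ yields
\[
    \|F(x) - \Psi_{p,v}(x)\| \leq \tfrac{L_{p-1}}{p!}\|x-v\|^p + \textstyle\sum_{i=1}^{p-1}\tfrac{\delta_i}{i!}\|x-v\|^i,
\]
which exactly matches the structure of~\eqref{eq:subproblem_tensor_cnd} up to a factor $\|x-v\|$. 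In parallel I would verify the $p$-th order analogue of Lemma~\ref{lm:subproblem_monotone}: differentiating each regularizer $\|x-v\|^{i-1}(x-v)$ produces a Jacobian whose symmetric part is bounded below by a positive multiple of $\|x-v\|^{i-1}I$, so with $\eta_i = 5p$ these regularization terms dominate the inexactness of $\nabla_x\Psi_{p,v}$ at the Jacobian level (the relevant bound on $\nabla_x \Psi_{p,v}(x) - \nabla F(x)$ being obtained by applying~\eqref{eq:tensor:inexact} termwise together with the Taylor estimate on $\nabla F - \nabla F_{p-1,v}$). Monotonicity of $F$ then implies that $\Omega_{p,v}$ is monotone in $x$, and the subproblem with tolerance~\eqref{eq:subproblem_tensor_cnd} is solvable by standard mirror-prox-type subsolvers.

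Next I would run the Perseus-style dual-extrapolation recursion, exactly mirroring the proof of Theorem~\ref{thm:monotone}. Using the mirror step defining $v_{k+1}$, the approximate subproblem condition for $x_{k+1}$, and the Taylor bound above evaluated at $v = v_{k+1}$, a telescoping over $k$ yields for every $x \in \XCal$
\[
    \textstyle\sum_{k=1}^T \lambda_k \langle F(x_k), x_k - x\rangle + \tfrac{1}{2}\|v_{T+1} - x\|^2 \leq \tfrac{1}{2}\|x-x_0\|^2 + E_T,
\]
where $E_T$ collects the subproblem tolerance and Taylor errors and is absorbed using the right inequality of the adaptive rule $\lambda_k(\tfrac{L_{p-1}}{p!}\|x_{k+1}-v_{k+1}\|^{p+1} + \sum_i\tfrac{\delta_i}{i!}\|x_{k+1}-v_{k+1}\|^{i+1}) \leq \tfrac{1}{2(5p+1)}$. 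Combining with monotonicity of $F$ in the form $\langle F(x), x_k - x\rangle \leq \langle F(x_k), x_k - x\rangle$ and Jensen's inequality for $\tilde{x}_T = (\sum_k \lambda_k x_k)/(\sum_k \lambda_k)$ produces the pre-final estimate
\[
    \textsc{gap}(\tilde{x}_T) \leq \frac{O(D^2)}{\sum_{k=1}^T \lambda_k}.
\]

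The final and most delicate step — and the main obstacle — is to lower-bound $\sum_k \lambda_k$ so that each scale $L_{p-1}, \delta_1, \ldots, \delta_{p-1}$ contributes its own exponent of $T$. The left inequality of the adaptive rule gives the pointwise bound $\lambda_k \geq c/(\tfrac{L_{p-1}}{p!}\|x_k-v_k\|^{p+1} + \sum_i\tfrac{\delta_i}{i!}\|x_k-v_k\|^{i+1})$, while the dual-extrapolation inequality (used with a telescoping choice of $x$) yields a progress bound of the form $\sum_k \lambda_k \|x_k - v_k\|^2 = O(D^2)$. For each dominant term in the denominator of $\lambda_k$, a Hölder-type inequality with exponents matched to the degree $p+1$ or $i+1$ of that term converts this progress bound into a lower bound on $\sum_k \lambda_k$ of the corresponding order in $T$ and $D$; substituting back produces the additive contributions $L_{p-1} D^{p+1}/T^{(p+1)/2}$ and $\delta_i D^{i+1}/T^{(i+1)/2}$ for $i=1,\ldots,p-1$. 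The difficulty is handling all $p$ scales simultaneously: the splitting-by-dominant-term technique used implicitly in the second-order proof to separate $L_1 D^3/T^{3/2}$ from $\delta D^2/T$ must be carried out once for each $i$, with Hölder exponents matched to $i+1$, so that the additive structure of the conclusion is preserved rather than collapsing into a single looser bound.
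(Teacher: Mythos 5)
Your overall architecture coincides with the paper's (tensor analogue of Lemma~\ref{lm:f_bnd_inxt_2}, monotonicity of the regularized subproblem, the Perseus-type dual-extrapolation recursion, and a H\"older argument converting the adaptive rule into a lower bound on $\sum_k\lambda_k$), but the step you yourself flag as the most delicate is mis-specified in two ways that break the quantitative argument. First, the progress bound you feed into the H\"older step is the $\lambda_k$-weighted quantity $\sum_k\lambda_k\|x_k-v_k\|^2=O(D^2)$. This is not what the recursion gives: in the telescoped inequality as you wrote it (with $E_T$ fully absorbed) no such term survives at all, and the bound the analysis actually produces is the \emph{unweighted} one, $\sum_{k=1}^T\|x_k-v_k\|^2\le 4\|x^*-x_0\|^2$, obtained by keeping the leftover $-\tfrac18\sum_k\|x_k-v_k\|^2$ on the left-hand side of the descent lemma and then evaluating at $x=x^*$ (this is the second inequality of \eqref{eq:DE-error}, reused verbatim in the tensor proof). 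The weighted bound is not a usable substitute: already for $p=2$, $\delta=0$, combining $\sum_k\lambda_k\|x_k-v_k\|^2=O(D^2)$ with $\lambda_k\simeq c/(L_1\|x_k-v_k\|)$ and Cauchy--Schwarz would give $\textsc{gap}=O(L_1^2D^4/T^2)$, which for large $T$ beats the $\Omega(L_1D^3/T^{3/2})$ lower bound --- a sign that such an estimate cannot be extracted from the scheme with $T$-independent constants.

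Second, you state the adaptive rule with degrees $p+1$ and $i+1$ (copying the main-text display, which is inconsistent with Algorithm~\ref{alg:tensor:main} and \eqref{eq:tensor:lambda choice}, where the degrees are $p-1$ and $i-1$), and you propose H\"older exponents ``matched to $p+1$ or $i+1$''. With those degrees the descent lemma itself fails: the rule only controls $\lambda_k$ times a degree-$(p+1)$ quantity, so $\lambda_k\|x_k-v_k\|^p$ is of order $1/\|x_k-v_k\|$ and the cross terms $\lambda_k\|x_k-v_k\|^p\|v_k-v_{k+1}\|$ (and their $\delta_i$ analogues) can no longer be dominated by $-\tfrac12\|v_k-v_{k+1}\|^2$ plus the regularization terms when $\|x_k-v_k\|$ is small, so the key estimate $\textbf{II}\le-\tfrac18\sum_k\|x_k-v_k\|^2$ is lost. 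The paper's proof needs the degree-$(p-1)$ rule both there and in the stepsize-accumulation lemma, where a single H\"older inequality $\sum_k\lambda_k\ge T^{(p+1)/2}\bigl(\sum_k\lambda_k^{-2/(p-1)}\bigr)^{-(p-1)/2}$ is combined with the unweighted progress bound and, for each $\delta_i$-term, a second H\"older with exponents $\tfrac{p-1}{i-1}$ and $\tfrac{p-1}{p-i}$ producing the $T^{(p-i)/(p-1)}$ factors; only with these ingredients do the additive contributions $L_{p-1}D^{p+1}/T^{(p+1)/2}$ and $\delta_iD^{i+1}/T^{(i+1)/2}$ emerge. So the plan is right in outline, but as written the crucial final step does not go through.
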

    Finally, we extend our tensor generalization to nonmonotone case and obtain the following result.
    \begin{theorem}\label{thm:tensor:nonmonotone}
        Let Assumptions~\ref{as:minty},~\ref{as:smooth_p} with $i = p-1$, and~\ref{as:tensor:inexact} hold.
        Then after $T \ge 1$ iterations of \algotensor\ with parameters $\eta = 5p,\ \opt=2$ we get the following bound
        \[
            \textsc{res}(\hat x) := \sup_{x \in \XCal} \left\langle F(\hat x_t), \hat x_t - x \right\rangle = O \left( \tfrac{\Lp D^{p + 1}}{T^\frac{p}{2}} + \textstyle{\sum}_{i=1}^{p-1} \tfrac{\delta_i  D^{i + 1}}{T^\frac{i}{2}} \right).
        \]
    \end{theorem}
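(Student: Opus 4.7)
The plan is to lift the dual-extrapolation argument behind Theorem~\ref{thm:tensor:monotone} up to its main telescoping inequality, and then branch off: in place of monotonicity-based aggregation use the Minty condition, and in place of a gap bound on the averaged iterate use the $\myopt=2$ rule to produce a residue bound at the best iterate. Setting $r_k := \|x_k - v_k\|$, the prox-like extrapolation defining $v_{k+1}$, the approximate subproblem condition~\eqref{eq:subproblem_tensor_cnd}, and the higher-order strong monotonicity of the model $\Omega_{p,v}$ coming from the added regularizers $\tfrac{5\Lp}{(p-1)!}\|x-v\|^{p-1}(x-v)$ and $\tfrac{\eta_i\delta_i}{i!}\|x-v\|^{i-1}(x-v)$ (with the calibration $\eta_i=5p$) combine into the template inequality
\begin{equation*}
    \sum_{k=1}^T \lambda_k \langle F(x_k), x_k - x\rangle + \tfrac{1}{2}\sum_{k=1}^T r_k^2 \;\le\; \tfrac{1}{2}\|x_0 - x\|^2 \;\le\; \tfrac{D^2}{2}, \qquad \forall\, x \in \XCal.
\end{equation*}
This is essentially the inequality already underlying Theorem~\ref{thm:tensor:monotone}; only its exploitation below differs.

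Applying Assumption~\ref{as:minty} with $x = x^*$ makes $\sum_k \lambda_k \langle F(x_k), x_k - x^*\rangle \ge 0$, which immediately yields $\sum_{k=1}^T r_k^2 \le D^2$. The $\myopt=2$ rule $k_T = \argmin_{1 \le k \le T} r_k$ then gives $r_{k_T}^2 \le D^2/T$, i.e.\ $r_{k_T} \le D/\sqrt{T}$. To convert this into a residue bound at $\hat{x}_T = x_{k_T}$, I decompose $F(x_{k_T}) = \Omega_{p,v_{k_T}}(x_{k_T}) + (F - \Omega_{p,v_{k_T}})(x_{k_T})$. The first piece is controlled by~\eqref{eq:subproblem_tensor_cnd}: $\langle \Omega_{p,v_{k_T}}(x_{k_T}), x_{k_T} - x\rangle \le \tfrac{\Lp}{p!}r_{k_T}^{p+1} + \sum_i \tfrac{\delta_i}{i!}r_{k_T}^{i+1}$. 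For the second, the tensor analogue of Lemma~\ref{lm:f_bnd_inxt_2} (available from Assumption~\ref{as:smooth_p} with $i=p-1$ and Assumption~\ref{as:tensor:inexact}), together with the explicit form of the regularizers inside $\Omega_{p,v}$, produces $\|F(x_{k_T}) - \Omega_{p,v_{k_T}}(x_{k_T})\| = O(\Lp r_{k_T}^{\,p} + \sum_i \delta_i r_{k_T}^{\,i})$. Combining via Cauchy--Schwarz with $\|x_{k_T} - x\| \le D$ yields $\res(x_{k_T}) = O(\Lp D\, r_{k_T}^{\,p} + \sum_i \delta_i D\, r_{k_T}^{\,i})$, and substituting $r_{k_T} \le D/\sqrt{T}$ delivers the claimed rate.

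The main obstacle is producing the $+\tfrac{1}{2}\sum r_k^2$ term in the displayed inequality: this needs a careful per-iteration accounting in which the excess regularization of $\Omega_{p,v}$ (the factor $5\Lp$ combined with the calibration $\eta_i = 5p$) strictly dominates both the approximate-subproblem slack $\tfrac{\Lp}{p!}r_k^{p+1} + \sum_i \tfrac{\delta_i}{i!}r_k^{i+1}$ and the $p$-th-order Taylor remainder left over after Assumption~\ref{as:tensor:inexact}, leaving a clean quadratic in $r_k$ that survives the telescoping together with the adaptive window on $\lambda_k$. Since this accounting is already the backbone of Theorem~\ref{thm:tensor:monotone}, the extension to the nonmonotone setting amounts to re-reading the resulting inequality through Minty and through the minimum selector $k_T$, as above.
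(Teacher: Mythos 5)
Your proposal is correct and follows essentially the same route as the paper: the telescoping dual-extrapolation inequality plus the Minty condition at $x^*$ gives $\sum_k\|x_k-v_k\|^2=O(D^2)$, the $\myopt=2$ selector gives $\|x_{k_T}-v_{k_T}\|=O(D/\sqrt{T})$, and the decomposition of $F(x_{k_T})$ into the model $\Omega_{p,v_{k_T}}$ (controlled by the approximate subproblem condition) plus the model error (controlled by the tensor analogue of Lemma~\ref{lm:f_bnd_inxt_2} together with the explicit regularizers) yields the residue bound. Only the absolute constants differ (the paper carries $\tfrac{1}{8}\sum_k\|x_k-v_k\|^2$ and $\|x_{k_T}-v_{k_T}\|\le 2D/\sqrt{T}$), which is immaterial for the stated $O(\cdot)$ rate.
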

\section{Experiments}\label{sec:experiments}
In this section, we present numerical experiments to demonstrate the efficiency of our proposed methods. We consider the cubic regularized bilinear min-max problem of the form:
\begin{equation*}
    \min_{x\in \R^d} \max_{y\in \R^d} f(x,y)=  y^{\top}(Ax-b) + \tfrac{\rho}{6}\|x\|^3,
\end{equation*}
where $\rho>0$, $b = [1,0,\ldots,0] \in \R^d$, and $A \in \R^{d \times d}$, with all $1$ on the main diagonal and all $-1$ on the upper diagonal, the rest elements are $0$.  
To reformulate it as variational inequality, we define $F(x) = [\nabla_x f(x,y), -\nabla_y f(x,y)]$.
This problem is inspired by the first-order lower bound function for variational inequalities and min-max problems and is commonly used to verify the convergence of high-order methods for VI \citep{lin2022explicit,jiang2022generalized}.\\
We implement our second-order method for {\bf V}ariational {\bf I}nequalities with {\bf Q}uasi-Newton {\bf A}pproximation (VIQA) as a PyTorch optimizer. VIQA Broyden refers to L-Broyden approximation \eqref{eq:l-broyd} and VIQA Damped Broyden to \eqref{eq:l-broyd_damped}, which are used as inexact Jacobians in \algo\ (Algorithm~\ref{alg:main}). We compare them with the Extragradient method (EG)~\cite{korpelevich1976extragradient}, first-order Perseus (Perseus1), and second-order Perseus with Jacobian (Perseus2).
\begin{figure}[H]
  \centering
  \includegraphics[width=0.45\linewidth]{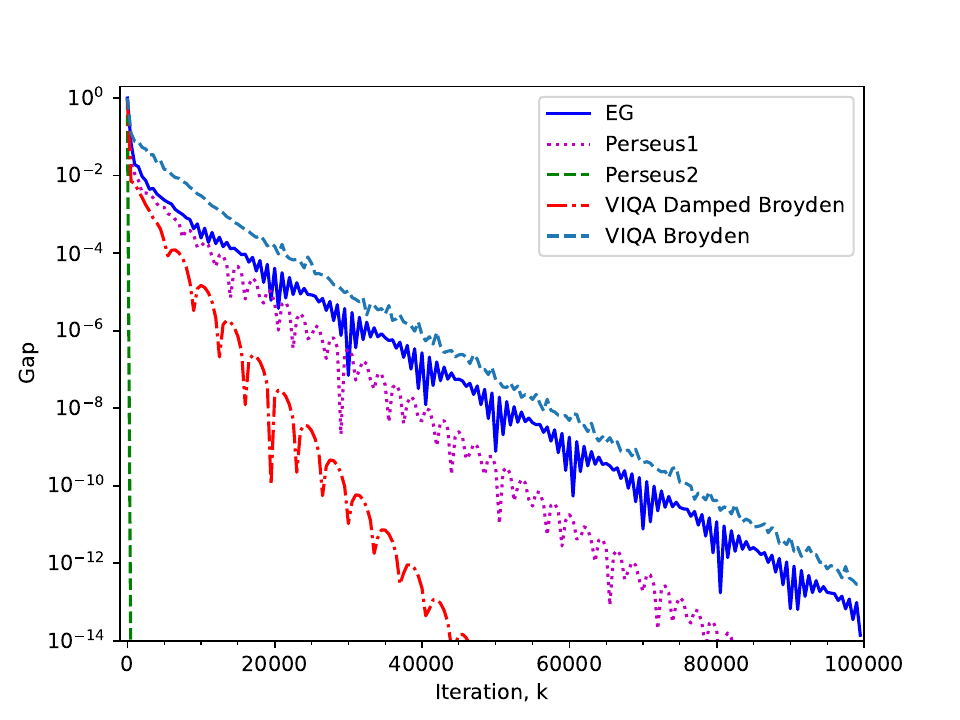}
  \includegraphics[width=0.45\linewidth]{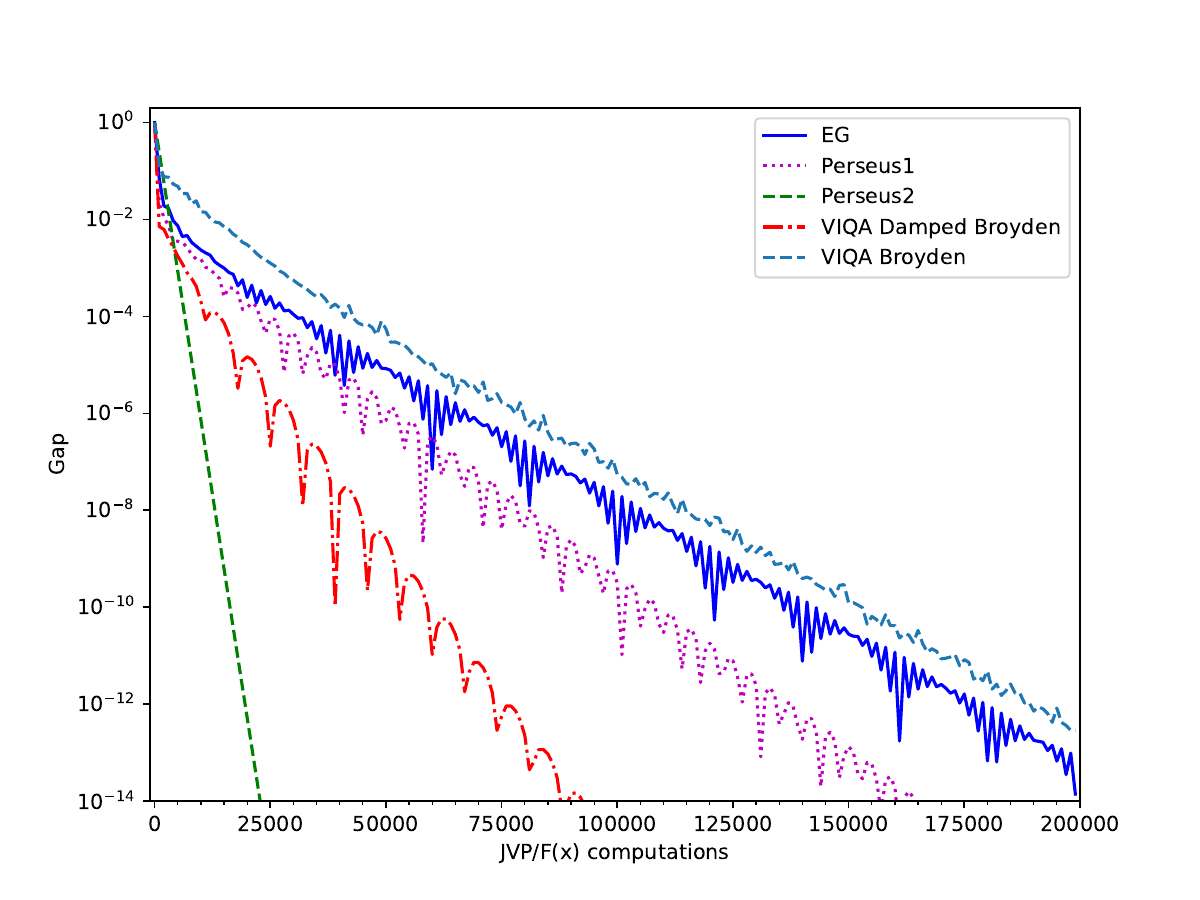}
  \caption{Comparison of different methods for $d=50, \rho = 1e-3$.}
  \label{fig:main}
\end{figure}
In Figure \ref{fig:main}, one can see that second-order information in Perseus2 significantly accelerates the convergence compared to Perseus1. However, it is expensive to compute Jacobian and solve second-order subproblems every iteration. VIQA with the proposed Damped Broyden approximation \eqref{eq:l-broyd_damped} is significantly faster than EG, Perseus1, and VIQA Broyden \eqref{eq:l-broyd}. It shows that this approximation improves the convergence of first-order Perseus1 and confirms the theoretical result that Damped Broyden is a more accurate approximation than classical Broyden from Theorem \ref{thm:broyd}.
The detailed parameters and setup
are presented in Appendix~\ref{app:exp}.

\section{Conclusion}

In this work, we introduced a second-order method specifically designed to handle Jacobian inexactness. We demonstrated its optimality in the monotone case by introducing a new lower bound and extended its applicability to tensor methods. However, similar to other high-order methods with global convergence properties, our algorithm involves a subproblem that necessitates an additional subroutine for its solution. To address this challenge, we proposed a computationally feasible criterion for solving the subproblem and implemented Quasi-Newton approximations for Jacobians, resulting in a significant reduction in per-iteration cost. Future investigations could explore incorporating inexactness within the operator itself and developing adaptive schemes to dynamically adjust for the level of inexactness encountered during the optimization process. Another open problem is a design of more accurate Quasi-Newton approximations specifically for Jacobians, focusing on non-symmetrical structure of Jacobian and specific inexactness criteria such as Assumption \ref{as:inexact_jac}.

\bibliographystyle{abbrvnat}
\bibliography{bibliography-ivi,kamzolov_full}

\begin{thebibliography}{88}
\providecommand{\natexlab}[1]{#1}
\providecommand{\url}[1]{\texttt{#1}}
\expandafter\ifx\csname urlstyle\endcsname\relax
  \providecommand{\doi}[1]{doi: #1}\else
  \providecommand{\doi}{doi: \begingroup \urlstyle{rm}\Url}\fi

\bibitem[Ablaev et~al.(2022)Ablaev, Titov, Stonyakin, Alkousa, and Gasnikov]{ablaev2022some}
S.~S. Ablaev, A.~A. Titov, F.~S. Stonyakin, M.~S. Alkousa, and A.~Gasnikov.
\newblock Some adaptive first-order methods for variational inequalities with relatively strongly monotone operators and generalized smoothness.
\newblock In \emph{International Conference on Optimization and Applications}, pages 135--150. Springer, 2022.

\bibitem[Adil et~al.(2022)Adil, Bullins, Jambulapati, and Sachdeva]{adil2022optimal}
D.~Adil, B.~Bullins, A.~Jambulapati, and S.~Sachdeva.
\newblock Optimal methods for higher-order smooth monotone variational inequalities.
\newblock \emph{arXiv preprint arXiv:2205.06167}, 2022.

\bibitem[Agafonov et~al.(2023)Agafonov, Kamzolov, Dvurechensky, Gasnikov, and Tak{\'a}{\v{c}}]{agafonov2023inexact}
A.~Agafonov, D.~Kamzolov, P.~Dvurechensky, A.~Gasnikov, and M.~Tak{\'a}{\v{c}}.
\newblock Inexact tensor methods and their application to stochastic convex optimization.
\newblock \emph{Optimization Methods and Software}, 0\penalty0 (0):\penalty0 1--42, 2023.
\newblock \doi{10.1080/10556788.2023.2261604}.
\newblock URL \url{https://doi.org/10.1080/10556788.2023.2261604}.

\bibitem[Agafonov et~al.(2024)Agafonov, Kamzolov, Gasnikov, Kavis, Antonakopoulos, Cevher, and Tak{\'a}{\v{c}}]{agafonov2023advancing}
A.~Agafonov, D.~Kamzolov, A.~Gasnikov, A.~Kavis, K.~Antonakopoulos, V.~Cevher, and M.~Tak{\'a}{\v{c}}.
\newblock Advancing the lower bounds: an accelerated, stochastic, second-order method with optimal adaptation to inexactness.
\newblock In \emph{The Twelfth International Conference on Learning Representations}, 2024.
\newblock URL \url{https://openreview.net/forum?id=otU31x3fus}.

\bibitem[Alves and Svaiter(2023)]{alves2023search}
M.~M. Alves and B.~F. Svaiter.
\newblock A search-free {O}$(1/k^{3/2}) $ homotopy inexact proximal-newton extragradient algorithm for monotone variational inequalities.
\newblock \emph{arXiv preprint arXiv:2308.05887}, 2023.

\bibitem[Antipin(1978)]{antipin1978method}
A.~Antipin.
\newblock Method of convex programming using a symmetric modification of lagrange function.
\newblock \emph{Matekon}, 14\penalty0 (2):\penalty0 23--38, 1978.

\bibitem[Antonakopoulos et~al.(2022)Antonakopoulos, Kavis, and Cevher]{antonakopoulos2022extra}
K.~Antonakopoulos, A.~Kavis, and V.~Cevher.
\newblock Extra-newton: A first approach to noise-adaptive accelerated second-order methods.
\newblock In S.~Koyejo, S.~Mohamed, A.~Agarwal, D.~Belgrave, K.~Cho, and A.~Oh, editors, \emph{Advances in Neural Information Processing Systems}, volume~35, pages 29859--29872. Curran Associates, Inc., 2022.
\newblock URL \url{https://proceedings.neurips.cc/paper_files/paper/2022/file/c10804702be5a0cca89331315413f1a2-Paper-Conference.pdf}.

\bibitem[Bach et~al.(2008)Bach, Mairal, and Ponce]{bach2008convex}
F.~Bach, J.~Mairal, and J.~Ponce.
\newblock Convex sparse matrix factorizations.
\newblock \emph{arXiv preprint arXiv:0812.1869}, 2008.

\bibitem[Bach et~al.(2012)Bach, Jenatton, Mairal, Obozinski, et~al.]{bach2012optimization}
F.~Bach, R.~Jenatton, J.~Mairal, G.~Obozinski, et~al.
\newblock Optimization with sparsity-inducing penalties.
\newblock \emph{Foundations and Trends{\textregistered} in Machine Learning}, 4\penalty0 (1):\penalty0 1--106, 2012.

\bibitem[Baes(2009)]{baes2009estimate}
M.~Baes.
\newblock Estimate sequence methods: extensions and approximations.
\newblock \emph{Institute for Operations Research, ETH, Z{\"u}rich, Switzerland}, 2\penalty0 (1), 2009.

\bibitem[Bellavia and Gurioli(2022)]{bellavia2022stochastic}
S.~Bellavia and G.~Gurioli.
\newblock Stochastic analysis of an adaptive cubic regularization method under inexact gradient evaluations and dynamic hessian accuracy.
\newblock \emph{Optimization}, 71:\penalty0 227--261, 2022.
\newblock \doi{10.1080/02331934.2021.1892104}.
\newblock URL \url{https://doi.org/10.1080/02331934.2021.1892104}.

\bibitem[Bellavia et~al.(2022)Bellavia, Gurioli, Morini, and Toint]{bellavia2022adaptive}
S.~Bellavia, G.~Gurioli, B.~Morini, and P.~Toint.
\newblock Adaptive regularization for nonconvex optimization using inexact function values and randomly perturbed derivatives.
\newblock \emph{Journal of Complexity}, 68:\penalty0 101591, 2022.
\newblock ISSN 0885-064X.
\newblock \doi{https://doi.org/10.1016/j.jco.2021.101591}.
\newblock URL \url{https://www.sciencedirect.com/science/article/pii/S0885064X21000467}.

\bibitem[Ben-Tal et~al.(2009)Ben-Tal, El~Ghaoui, and Nemirovski]{ben2009robust}
A.~Ben-Tal, L.~El~Ghaoui, and A.~Nemirovski.
\newblock \emph{Robust optimization}, volume~28.
\newblock Princeton university press, 2009.

\bibitem[Beznosikov et~al.(2023)Beznosikov, Polyak, Gorbunov, Kovalev, and Gasnikov]{beznosikov2023smooth}
A.~Beznosikov, B.~Polyak, E.~Gorbunov, D.~Kovalev, and A.~Gasnikov.
\newblock Smooth monotone stochastic variational inequalities and saddle point problems: A survey.
\newblock \emph{European Mathematical Society Magazine}, \penalty0 (127):\penalty0 15--28, 2023.

\bibitem[Bonnans(1994)]{bonnans1994local}
J.~F. Bonnans.
\newblock Local analysis of newton-type methods for variational inequalities and nonlinear programming.
\newblock \emph{Applied Mathematics and Optimization}, 29:\penalty0 161--186, 1994.

\bibitem[Brighi and John(2002)]{brighi2002characterizations}
L.~Brighi and R.~John.
\newblock Characterizations of pseudomonotone maps and economic equilibrium.
\newblock \emph{Journal of Statistics and Management Systems}, 5\penalty0 (1-3):\penalty0 253--273, 2002.

\bibitem[Bullins and Lai(2022)]{bullins2022higher}
B.~Bullins and K.~A. Lai.
\newblock Higher-order methods for convex-concave min-max optimization and monotone variational inequalities.
\newblock \emph{SIAM Journal on Optimization}, 32\penalty0 (3):\penalty0 2208--2229, 2022.

\bibitem[Carmon et~al.(2022)Carmon, Hausler, Jambulapati, Jin, and Sidford]{carmon2022optimal}
Y.~Carmon, D.~Hausler, A.~Jambulapati, Y.~Jin, and A.~Sidford.
\newblock Optimal and adaptive monteiro-svaiter acceleration.
\newblock In S.~Koyejo, S.~Mohamed, A.~Agarwal, D.~Belgrave, K.~Cho, and A.~Oh, editors, \emph{Advances in Neural Information Processing Systems}, volume~35, pages 20338--20350. Curran Associates, Inc., 2022.
\newblock URL \url{https://proceedings.neurips.cc/paper_files/paper/2022/file/7ff97417474268e6b5a38bcbfae04944-Paper-Conference.pdf}.

\bibitem[Cartis and Scheinberg(2018)]{cartis2018global}
C.~Cartis and K.~Scheinberg.
\newblock Global convergence rate analysis of unconstrained optimization methods based on probabilistic models.
\newblock \emph{Mathematical Programming}, 169:\penalty0 337--375, 2018.
\newblock ISSN 1436-4646.
\newblock \doi{10.1007/s10107-017-1137-4}.
\newblock URL \url{https://doi.org/10.1007/s10107-017-1137-4}.

\bibitem[Cartis et~al.(2011{\natexlab{a}})Cartis, Gould, and Toint]{cartis2011adaptive}
C.~Cartis, N.~I. Gould, and P.~L. Toint.
\newblock Adaptive cubic regularisation methods for unconstrained optimization. part i: motivation, convergence and numerical results.
\newblock \emph{Mathematical Programming}, 127\penalty0 (2):\penalty0 245--295, 2011{\natexlab{a}}.

\bibitem[Cartis et~al.(2011{\natexlab{b}})Cartis, Gould, and Toint]{cartis2011adaptive2}
C.~Cartis, N.~I. Gould, and P.~L. Toint.
\newblock Adaptive cubic regularisation methods for unconstrained optimization. part ii: worst-case function-and derivative-evaluation complexity.
\newblock \emph{Mathematical programming}, 130\penalty0 (2):\penalty0 295--319, 2011{\natexlab{b}}.

\bibitem[Cartis et~al.(2022)Cartis, Gould, and Toint]{cartis2022evaluation}
C.~Cartis, N.~I. Gould, and P.~L. Toint.
\newblock \emph{Evaluation Complexity of Algorithms for Nonconvex Optimization: Theory, Computation and Perspectives}.
\newblock SIAM, 2022.

\bibitem[Chambolle and Pock(2011)]{chambolle2011first}
A.~Chambolle and T.~Pock.
\newblock A first-order primal-dual algorithm for convex problems with applications to imaging.
\newblock \emph{Journal of mathematical imaging and vision}, 40:\penalty0 120--145, 2011.

\bibitem[Chavdarova et~al.(2019)Chavdarova, Gidel, Fleuret, and Lacoste-Julien]{chavdarova2019reducing}
T.~Chavdarova, G.~Gidel, F.~Fleuret, and S.~Lacoste-Julien.
\newblock Reducing noise in gan training with variance reduced extragradient.
\newblock \emph{Advances in Neural Information Processing Systems}, 32, 2019.

\bibitem[Choi et~al.(1990)Choi, DeSarbo, and Harker]{choi1990product}
S.~C. Choi, W.~S. DeSarbo, and P.~T. Harker.
\newblock Product positioning under price competition.
\newblock \emph{Management Science}, 36\penalty0 (2):\penalty0 175--199, 1990.

\bibitem[Dang and Lan(2015)]{dang2015convergence}
C.~D. Dang and G.~Lan.
\newblock On the convergence properties of non-euclidean extragradient methods for variational inequalities with generalized monotone operators.
\newblock \emph{Computational Optimization and applications}, 60:\penalty0 277--310, 2015.

\bibitem[Daskalakis et~al.(2018)Daskalakis, Ilyas, Syrgkanis, and Zeng]{daskalakis2017training}
C.~Daskalakis, A.~Ilyas, V.~Syrgkanis, and H.~Zeng.
\newblock Training {GAN}s with optimism.
\newblock In \emph{International Conference on Learning Representations}, 2018.
\newblock URL \url{https://openreview.net/forum?id=SJJySbbAZ}.

\bibitem[Daskalakis et~al.(2021)Daskalakis, Skoulakis, and Zampetakis]{daskalakis2021complexity}
C.~Daskalakis, S.~Skoulakis, and M.~Zampetakis.
\newblock The complexity of constrained min-max optimization.
\newblock In \emph{Proceedings of the 53rd Annual ACM SIGACT Symposium on Theory of Computing}, pages 1466--1478, 2021.

\bibitem[Devolder et~al.(2014)Devolder, Glineur, and Nesterov]{devolder2014first}
O.~Devolder, F.~Glineur, and Y.~Nesterov.
\newblock First-order methods of smooth convex optimization with inexact oracle.
\newblock \emph{Mathematical Programming}, 146:\penalty0 37--75, 2014.
\newblock ISSN 1436-4646.
\newblock \doi{10.1007/s10107-013-0677-5}.
\newblock URL \url{https://doi.org/10.1007/s10107-013-0677-5}.

\bibitem[Doikov et~al.(2023)Doikov, Chayti, and Jaggi]{doikov2023second}
N.~Doikov, E.~M. Chayti, and M.~Jaggi.
\newblock Second-order optimization with lazy {H}essians.
\newblock In A.~Krause, E.~Brunskill, K.~Cho, B.~Engelhardt, S.~Sabato, and J.~Scarlett, editors, \emph{Proceedings of the 40th International Conference on Machine Learning}, volume 202, pages 8138--8161. PMLR, 9 2023.
\newblock URL \url{https://proceedings.mlr.press/v202/doikov23a.html}.

\bibitem[Esser et~al.(2010)Esser, Zhang, and Chan]{esser2010general}
E.~Esser, X.~Zhang, and T.~F. Chan.
\newblock A general framework for a class of first order primal-dual algorithms for convex optimization in imaging science.
\newblock \emph{SIAM Journal on Imaging Sciences}, 3\penalty0 (4):\penalty0 1015--1046, 2010.

\bibitem[Ewerhart(2014)]{ewerhart2014cournot}
C.~Ewerhart.
\newblock Cournot games with biconcave demand.
\newblock \emph{Games and Economic Behavior}, 85:\penalty0 37--47, 2014.

\bibitem[Facchinei and Pang(2003)]{facchinei2003finite}
F.~Facchinei and J.-S. Pang.
\newblock \emph{Finite-dimensional variational inequalities and complementarity problems}.
\newblock Springer, 2003.

\bibitem[Fern{\'a}ndez(2013)]{fernandez2013quasi}
D.~Fern{\'a}ndez.
\newblock A quasi-newton strategy for the ssqp method for variational inequality and optimization problems.
\newblock \emph{Mathematical Programming}, 137:\penalty0 199--223, 2013.

\bibitem[Gallego and Hu(2014)]{gallego2014dynamic}
G.~Gallego and M.~Hu.
\newblock Dynamic pricing of perishable assets under competition.
\newblock \emph{Management Science}, 60\penalty0 (5):\penalty0 1241--1259, 2014.

\bibitem[Ghadimi et~al.(2017)Ghadimi, Liu, and Zhang]{ghadimi2017second}
S.~Ghadimi, H.~Liu, and T.~Zhang.
\newblock Second-order methods with cubic regularization under inexact information.
\newblock \emph{arXiv preprint arXiv:1710.05782}, 2017.

\bibitem[Gidel et~al.(2019)Gidel, Berard, Vignoud, Vincent, and Lacoste-Julien]{gidel2018variational}
G.~Gidel, H.~Berard, G.~Vignoud, P.~Vincent, and S.~Lacoste-Julien.
\newblock A variational inequality perspective on generative adversarial networks.
\newblock In \emph{International Conference on Learning Representations}, 2019.
\newblock URL \url{https://openreview.net/forum?id=r1laEnA5Ym}.

\bibitem[Goodfellow et~al.(2014)Goodfellow, Pouget-Abadie, Mirza, Xu, Warde-Farley, Ozair, Courville, and Bengio]{goodfellow2014generative}
I.~Goodfellow, J.~Pouget-Abadie, M.~Mirza, B.~Xu, D.~Warde-Farley, S.~Ozair, A.~Courville, and Y.~Bengio.
\newblock Generative adversarial nets.
\newblock \emph{Advances in neural information processing systems}, 27, 2014.

\bibitem[Han and Sun(1998)]{han1998newton}
J.~Han and D.~Sun.
\newblock Newton-type methods for variational inequalities.
\newblock In \emph{Advances in Nonlinear Programming: Proceedings of the 96 International Conference on Nonlinear Programming}, pages 105--118. Springer, 1998.

\bibitem[Harker and Pang(1990)]{harker1990finite}
P.~T. Harker and J.-S. Pang.
\newblock Finite-dimensional variational inequality and nonlinear complementarity problems: a survey of theory, algorithms and applications.
\newblock \emph{Mathematical programming}, 48\penalty0 (1):\penalty0 161--220, 1990.

\bibitem[Hartman and Stampacchia(1966)]{hartman1966some}
P.~Hartman and G.~Stampacchia.
\newblock On some non-linear elliptic differential-functional equations.
\newblock \emph{Acta Mathematica}, 115\penalty0 (1):\penalty0 271--310, 1966.

\bibitem[Huang et~al.(2022)Huang, Zhang, and Zhang]{huang2022cubic}
K.~Huang, J.~Zhang, and S.~Zhang.
\newblock Cubic regularized {N}ewton method for the saddle point models: A global and local convergence analysis.
\newblock \emph{Journal of Scientific Computing}, 91\penalty0 (2):\penalty0 60, 2022.

\bibitem[Iusem et~al.(2017)Iusem, Jofr{\'e}, Oliveira, and Thompson]{iusem2017extragradient}
A.~N. Iusem, A.~Jofr{\'e}, R.~I. Oliveira, and P.~Thompson.
\newblock Extragradient method with variance reduction for stochastic variational inequalities.
\newblock \emph{SIAM Journal on Optimization}, 27\penalty0 (2):\penalty0 686--724, 2017.

\bibitem[Jiang and Mokhtari(2022)]{jiang2022generalized}
R.~Jiang and A.~Mokhtari.
\newblock Generalized optimistic methods for convex-concave saddle point problems.
\newblock \emph{arXiv preprint arXiv:2202.09674}, 2022.

\bibitem[Jiang and Mokhtari(2024)]{jiang2024accelerated}
R.~Jiang and A.~Mokhtari.
\newblock Accelerated quasi-newton proximal extragradient: Faster rate for smooth convex optimization.
\newblock \emph{Advances in Neural Information Processing Systems}, 36, 2024.

\bibitem[Jiang et~al.(2023)Jiang, Jin, and Mokhtari]{jiang2023online}
R.~Jiang, Q.~Jin, and A.~Mokhtari.
\newblock Online learning guided curvature approximation: A quasi-newton method with global non-asymptotic superlinear convergence.
\newblock In \emph{The Thirty Sixth Annual Conference on Learning Theory}, pages 1962--1992. PMLR, 2023.

\bibitem[Jin and Sidford(2020)]{jin2020efficiently}
Y.~Jin and A.~Sidford.
\newblock Efficiently solving mdps with stochastic mirror descent.
\newblock In \emph{International Conference on Machine Learning}, pages 4890--4900. PMLR, 2020.

\bibitem[Joachims(2005)]{joachims2005support}
T.~Joachims.
\newblock A support vector method for multivariate performance measures.
\newblock In \emph{Proceedings of the 22nd international conference on Machine learning}, pages 377--384, 2005.

\bibitem[Kamzolov et~al.(2023)Kamzolov, Ziu, Agafonov, and Tak{\'a}{\v{c}}]{kamzolov2023cubic}
D.~Kamzolov, K.~Ziu, A.~Agafonov, and M.~Tak{\'a}{\v{c}}.
\newblock Cubic regularization is the key! the first accelerated quasi-newton method with a global convergence rate of $ o (k^{-2}) $ for convex functions.
\newblock \emph{arXiv preprint arXiv:2302.04987}, 2023.

\bibitem[Kannan and Shanbhag(2019)]{kannan2019optimal}
A.~Kannan and U.~V. Shanbhag.
\newblock Optimal stochastic extragradient schemes for pseudomonotone stochastic variational inequality problems and their variants.
\newblock \emph{Computational Optimization and Applications}, 74\penalty0 (3):\penalty0 779--820, 2019.

\bibitem[Kleinberg et~al.(2018)Kleinberg, Li, and Yuan]{kleinberg2018alternative}
B.~Kleinberg, Y.~Li, and Y.~Yuan.
\newblock An alternative view: When does sgd escape local minima?
\newblock In \emph{International Conference on Machine Learning}, pages 2698--2707. PMLR, 2018.

\bibitem[Kohler and Lucchi(2017)]{kohler2017sub}
J.~M. Kohler and A.~Lucchi.
\newblock Sub-sampled cubic regularization for non-convex optimization.
\newblock In D.~Precup and Y.~W. Teh, editors, \emph{Proceedings of the 34th International Conference on Machine Learning}, volume~70, pages 1895--1904. PMLR, 5 2017.
\newblock URL \url{https://proceedings.mlr.press/v70/kohler17a.html}.

\bibitem[Korpelevich(1976)]{korpelevich1976extragradient}
G.~M. Korpelevich.
\newblock The extragradient method for finding saddle points and other problems.
\newblock \emph{Matecon}, 12:\penalty0 747--756, 1976.

\bibitem[Kotsalis et~al.(2022)Kotsalis, Lan, and Li]{kotsalis2022simple}
G.~Kotsalis, G.~Lan, and T.~Li.
\newblock Simple and optimal methods for stochastic variational inequalities, i: operator extrapolation.
\newblock \emph{SIAM Journal on Optimization}, 32\penalty0 (3):\penalty0 2041--2073, 2022.

\bibitem[Kovalev and Gasnikov(2022)]{kovalev2022first}
D.~Kovalev and A.~Gasnikov.
\newblock The first optimal acceleration of high-order methods in smooth convex optimization.
\newblock In S.~Koyejo, S.~Mohamed, A.~Agarwal, D.~Belgrave, K.~Cho, and A.~Oh, editors, \emph{Advances in Neural Information Processing Systems}, volume~35, pages 35339--35351. Curran Associates, Inc., 2022.
\newblock URL \url{https://proceedings.neurips.cc/paper_files/paper/2022/file/e56f394bbd4f0ec81393d767caa5a31b-Paper-Conference.pdf}.

\bibitem[Li and Orabona(2019)]{li2019convergence}
X.~Li and F.~Orabona.
\newblock On the convergence of stochastic gradient descent with adaptive stepsizes.
\newblock In K.~Chaudhuri and M.~Sugiyama, editors, \emph{Proceedings of the Twenty-Second International Conference on Artificial Intelligence and Statistics}, volume~89, pages 983--992. PMLR, 4 2019.
\newblock URL \url{https://proceedings.mlr.press/v89/li19c.html}.

\bibitem[Li and Yuan(2017)]{li2017convergence}
Y.~Li and Y.~Yuan.
\newblock Convergence analysis of two-layer neural networks with relu activation.
\newblock \emph{Advances in neural information processing systems}, 30, 2017.

\bibitem[Liang and Stokes(2019)]{liang2019interaction}
T.~Liang and J.~Stokes.
\newblock Interaction matters: A note on non-asymptotic local convergence of generative adversarial networks.
\newblock In \emph{The 22nd International Conference on Artificial Intelligence and Statistics}, pages 907--915. PMLR, 2019.

\bibitem[Lin and Jordan(2023)]{lin2023monotone}
T.~Lin and M.~I. Jordan.
\newblock Monotone inclusions, acceleration, and closed-loop control.
\newblock \emph{Mathematics of Operations Research}, 48\penalty0 (4):\penalty0 2353--2382, 2023.

\bibitem[Lin and Jordan(2024)]{lin2024perseus}
T.~Lin and M.~I. Jordan.
\newblock Perseus: A simple and optimal high-order method for variational inequalities.
\newblock \emph{Mathematical Programming}, pages 1--42, 2024.

\bibitem[Lin et~al.(2022{\natexlab{a}})Lin, Jordan, et~al.]{lin2022continuous}
T.~Lin, M.~Jordan, et~al.
\newblock A continuous-time perspective on optimal methods for monotone equation problems.
\newblock \emph{arXiv preprint arXiv:2206.04770}, 2022{\natexlab{a}}.

\bibitem[Lin et~al.(2022{\natexlab{b}})Lin, Mertikopoulos, and Jordan]{lin2022explicit}
T.~Lin, P.~Mertikopoulos, and M.~I. Jordan.
\newblock Explicit second-order min-max optimization methods with optimal convergence guarantee.
\newblock \emph{arXiv preprint arXiv:2210.12860}, 2022{\natexlab{b}}.

\bibitem[Liu and Luo(2022)]{liu2022regularized}
C.~Liu and L.~Luo.
\newblock Regularized newton methods for monotone variational inequalities with h$\backslash$" older continuous jacobians.
\newblock \emph{arXiv preprint arXiv:2212.07824}, 2022.

\bibitem[Lucchi and Kohler(2023)]{lucchi2022sub}
A.~Lucchi and J.~Kohler.
\newblock A sub-sampled tensor method for nonconvex optimization.
\newblock \emph{IMA Journal of Numerical Analysis}, 43:\penalty0 2856--2891, 10 2023.
\newblock ISSN 0272-4979.
\newblock \doi{10.1093/imanum/drac057}.
\newblock URL \url{https://doi.org/10.1093/imanum/drac057}.

\bibitem[Madry et~al.(2018)Madry, Makelov, Schmidt, Tsipras, and Vladu]{madry2017towards}
A.~Madry, A.~Makelov, L.~Schmidt, D.~Tsipras, and A.~Vladu.
\newblock Towards deep learning models resistant to adversarial attacks.
\newblock In \emph{International Conference on Learning Representations}, 2018.
\newblock URL \url{https://openreview.net/forum?id=rJzIBfZAb}.

\bibitem[Mertikopoulos et~al.(2019)Mertikopoulos, Lecouat, Zenati, Foo, Chandrasekhar, and Piliouras]{mertikopoulos2018optimistic}
P.~Mertikopoulos, B.~Lecouat, H.~Zenati, C.-S. Foo, V.~Chandrasekhar, and G.~Piliouras.
\newblock Optimistic mirror descent in saddle-point problems: Going the extra(-gradient) mile.
\newblock In \emph{International Conference on Learning Representations}, 2019.
\newblock URL \url{https://openreview.net/forum?id=Bkg8jjC9KQ}.

\bibitem[Minty(1962)]{minty1962monotone}
G.~J. Minty.
\newblock Monotone (nonlinear) operators in hilbert space.
\newblock \emph{Duke Mathematical Journal}, 29\penalty0 (3), 1962.

\bibitem[Mokhtari et~al.(2020)Mokhtari, Ozdaglar, and Pattathil]{mokhtari2020convergence}
A.~Mokhtari, A.~E. Ozdaglar, and S.~Pattathil.
\newblock Convergence rate of o(1/k) for optimistic gradient and extragradient methods in smooth convex-concave saddle point problems.
\newblock \emph{SIAM Journal on Optimization}, 30\penalty0 (4):\penalty0 3230--3251, 2020.

\bibitem[Monteiro and Svaiter(2013)]{monteiro2013accelerated}
R.~D.~C. Monteiro and B.~F. Svaiter.
\newblock An accelerated hybrid proximal extragradient method for convex optimization and its implications to second-order methods.
\newblock \emph{SIAM Journal on Optimization}, 23:\penalty0 1092--1125, 2013.
\newblock \doi{10.1137/110833786}.
\newblock URL \url{https://doi.org/10.1137/110833786}.

\bibitem[Nemirovski(2004)]{nemirovski2004prox}
A.~Nemirovski.
\newblock Prox-method with rate of convergence o (1/t) for variational inequalities with lipschitz continuous monotone operators and smooth convex-concave saddle point problems.
\newblock \emph{SIAM Journal on Optimization}, 15\penalty0 (1):\penalty0 229--251, 2004.

\bibitem[Nesterov(2007)]{nesterov2007dual}
Y.~Nesterov.
\newblock Dual extrapolation and its applications to solving variational inequalities and related problems.
\newblock \emph{Mathematical Programming}, 109\penalty0 (2):\penalty0 319--344, 2007.

\bibitem[Nesterov(2008)]{nesterov2008accelerating}
Y.~Nesterov.
\newblock Accelerating the cubic regularization of {N}ewton’s method on convex problems.
\newblock \emph{Mathematical Programming}, 112:\penalty0 159--181, 2008.
\newblock ISSN 1436-4646.
\newblock \doi{10.1007/s10107-006-0089-x}.
\newblock URL \url{https://doi.org/10.1007/s10107-006-0089-x}.

\bibitem[Nesterov(2021)]{nesterov2021implementable}
Y.~Nesterov.
\newblock Implementable tensor methods in unconstrained convex optimization.
\newblock \emph{Mathematical Programming}, 186:\penalty0 157--183, 2021.
\newblock ISSN 1436-4646.
\newblock \doi{10.1007/s10107-019-01449-1}.
\newblock URL \url{https://doi.org/10.1007/s10107-019-01449-1}.

\bibitem[Nesterov(2023)]{nesterov2023high}
Y.~Nesterov.
\newblock High-order reduced-gradient methods for composite variational inequalities.
\newblock \emph{arXiv preprint arXiv:2311.15154}, 2023.

\bibitem[Nesterov and Polyak(2006)]{nesterov2006cubic}
Y.~Nesterov and B.~T. Polyak.
\newblock Cubic regularization of {N}ewton method and its global performance.
\newblock \emph{Mathematical Programming}, 108:\penalty0 177--205, 2006.
\newblock \doi{10.1007/s10107-006-0706-8}.
\newblock URL \url{https://doi.org/10.1007/s10107-006-0706-8}.

\bibitem[Nocedal and Wright(2006)]{nocedal2006numerical}
J.~Nocedal and S.~J. Wright.
\newblock \emph{Numerical Optimization}.
\newblock Springer New York, 2 edition, 2006.
\newblock ISBN 978-0-387-30303-1.
\newblock \doi{10.1007/978-0-387-40065-5}.

\bibitem[Omidshafiei et~al.(2017)Omidshafiei, Pazis, Amato, How, and Vian]{omidshafiei2017deep}
S.~Omidshafiei, J.~Pazis, C.~Amato, J.~P. How, and J.~Vian.
\newblock Deep decentralized multi-task multi-agent reinforcement learning under partial observability.
\newblock In \emph{International Conference on Machine Learning}, pages 2681--2690. PMLR, 2017.

\bibitem[Ostroukhov et~al.(2020)Ostroukhov, Kamalov, Dvurechensky, and Gasnikov]{ostroukhov2020tensor}
P.~Ostroukhov, R.~Kamalov, P.~Dvurechensky, and A.~Gasnikov.
\newblock Tensor methods for strongly convex strongly concave saddle point problems and strongly monotone variational inequalities.
\newblock \emph{arXiv preprint arXiv:2012.15595}, 2020.

\bibitem[Ouyang and Xu(2021)]{ouyang2021lower}
Y.~Ouyang and Y.~Xu.
\newblock Lower complexity bounds of first-order methods for convex-concave bilinear saddle-point problems.
\newblock \emph{Mathematical Programming}, 185\penalty0 (1):\penalty0 1--35, 2021.

\bibitem[Peng et~al.(2020)Peng, Dai, Zhang, and Cheng]{peng2020training}
W.~Peng, Y.-H. Dai, H.~Zhang, and L.~Cheng.
\newblock Training gans with centripetal acceleration.
\newblock \emph{Optimization Methods and Software}, 35\penalty0 (5):\penalty0 955--973, 2020.

\bibitem[Popov(1980)]{popov1980modification}
L.~D. Popov.
\newblock A modification of the arrow-hurwitz method of search for saddle points.
\newblock \emph{Mat. Zametki}, 28\penalty0 (5):\penalty0 777--784, 1980.

\bibitem[Scieur(2023)]{scieur2023adaptive}
D.~Scieur.
\newblock Adaptive {Q}uasi-{N}ewton and anderson acceleration framework with explicit global (accelerated) convergence rates.
\newblock \emph{arXiv preprint arXiv:2305.19179}, 2023.

\bibitem[Tripuraneni et~al.(2018)Tripuraneni, Stern, Jin, Regier, and Jordan]{tripuraneni2018stochastic}
N.~Tripuraneni, M.~Stern, C.~Jin, J.~Regier, and M.~I. Jordan.
\newblock Stochastic cubic regularization for fast nonconvex optimization.
\newblock In S.~Bengio, H.~Wallach, H.~Larochelle, K.~Grauman, N.~Cesa-Bianchi, and R.~Garnett, editors, \emph{Advances in Neural Information Processing Systems}, volume~31. Curran Associates, Inc., 2018.
\newblock URL \url{https://proceedings.neurips.cc/paper_files/paper/2018/file/db1915052d15f7815c8b88e879465a1e-Paper.pdf}.

\bibitem[Tseng(2000)]{tseng2000modified}
P.~Tseng.
\newblock A modified forward-backward splitting method for maximal monotone mappings.
\newblock \emph{SIAM Journal on Control and Optimization}, 38\penalty0 (2):\penalty0 431--446, 2000.

\bibitem[Woodbury(1949)]{woodbury1949stability}
M.~A. Woodbury.
\newblock The stability of out-input matrices.
\newblock \emph{Chicago, IL}, 9:\penalty0 3--8, 1949.

\bibitem[Woodbury(1950)]{woodbury1950inverting}
M.~A. Woodbury.
\newblock \emph{Inverting modified matrices}.
\newblock Department of Statistics, Princeton University, 1950.

\bibitem[Xu et~al.(2004)Xu, Neufeld, Larson, and Schuurmans]{xu2004maximum}
L.~Xu, J.~Neufeld, B.~Larson, and D.~Schuurmans.
\newblock Maximum margin clustering.
\newblock \emph{Advances in neural information processing systems}, 17, 2004.

\bibitem[Xu et~al.(2020)Xu, Roosta, and Mahoney]{xu2020newton}
P.~Xu, F.~Roosta, and M.~W. Mahoney.
\newblock Newton-type methods for non-convex optimization under inexact {H}essian information.
\newblock \emph{Mathematical Programming}, 184:\penalty0 35--70, 2020.
\newblock ISSN 1436-4646.
\newblock \doi{10.1007/s10107-019-01405-z}.
\newblock URL \url{https://doi.org/10.1007/s10107-019-01405-z}.

\end{thebibliography}

\newpage
\appendix
\tableofcontents
\addtocontents{toc}{\protect\setcounter{tocdepth}{2}}
\newpage
\section{Proofs of Lemmas~\ref{lm:f_bnd_inxt_2},~\ref{lm:subproblem_monotone}}\label{app:monotone aux lemmas}
In this section, we let $L := L_1$.
\begin{customlemma}{\ref{lm:f_bnd_inxt_2}}
    Let Assumptions~\ref{as:smooth_2} and \ref{as:inexact_jac} hold. Then, for any $x, v \in \XCal$ 
    \begin{equation}\label{eq:inexact smoothness}
        \|F(x) - \Psi_v(x)\| \leq \tfrac{L}{2}\|x - v\|^2 + \delta\|x-v\|.
    \end{equation}
\end{customlemma}

\begin{proof}
    For any $x,y \in \XCal$
    \begin{gather*}
         \|F(x) - \Psi_v(x)\| \leq \|F(x) - \Phi_v(x)\| + \|\Phi_v(x) - \Psi_v(x)\| \\ \stackrel{\eqref{eq:f_bound_2}}{\leq} \frac{L}{2}\|x - v\|^2 + \|(\nabla F(v) - J(v))[x - v]\| \leq \frac{L}{2}\|x - v\|^2 + \delta\|x - v\|.
    \end{gather*}
\end{proof}

\begin{customlemma}{\ref{lm:subproblem_monotone}}
    Let Assumptions~\eqref{as:monotone},~\eqref{as:smooth_2},~\eqref{as:inexact_jac} hold. Then for any $x,v \in \XCal$ VI~\eqref{eq:subproblem} is monotone
    \begin{equation*}
        \tfrac{1}{2} \ls \nabla \Omega_{v} (x) +  \nabla \Omega_{v}(x)^T \rs \succeq 4L_1\|x - v\| I_{d \times d} + 5L_1\tfrac{(x-v)(x-v)^T}{\|x-v\|} + (\eta - 1)\delta I_{d \times d}. 
    \end{equation*}
\end{customlemma}
\begin{proof}
    For all $x,~v \in \XCal$
    \begin{gather*}
        \tfrac{1}{2} \ls \nabla \Omega_{v} (x) +  \nabla \Omega_{v} (x)^T \rs \\ 
        \stackrel{\eqref{eq:model}}{=}
        \tfrac{1}{2} \ls J(v) + J(v)^T \rs + \eta \delta I_{d \times d} + 5L\|x - v\| I_{d \times d} + 5L\frac{(x-v)(x-v)^T}{\|x - v\|} \\
        \stackrel{\eqref{eq:inexact smoothness}}{\succeq}
        \tfrac{1}{2} \ls \nabla F(x) + \nabla F(x)^T \rs + 4L\|x - v\| I_{d \times d} + 5L\tfrac{(x-v)(x-v)^T}{\|x-v\|} + (\eta - 1)\delta I_{d \times d} \\
        \stackrel{\eqref{eq:monotone}}{\succeq} 4L\|x - v\| I_{d \times d} + 5L\tfrac{(x-v)(x-v)^T}{\|x-v\|} + (\eta - 1)\delta I_{d \times d}. 
    \end{gather*}
\end{proof}
\section{Proofs of Theorems~\ref{thm:monotone},~\ref{thm:monotone_exact}}\label{app:monotone convergence}

\subsection{Proof of Theorem~\ref{thm:monotone}}

In this section, we let $L := L_1$. To show the convergence of Algorithm~\ref{alg:main}, we define the following Lyapunov function
\begin{equation}
\label{eq:lyapunov}
    \ECal_k = \max_{v \in \XCal} \ \langle s_k, v - x_0\rangle - \tfrac{1}{2}\|v - x_0\|^2.
\end{equation}

\begin{lemma}
    \label{lm:DE-descent}
    Let Assumption~\ref{as:smooth_2},~\ref{as:inexact_jac} hold. Then, for every integer $T \geq 1$, we have
    \begin{equation*}
        \sum_{k=1}^T \lambda_k \langle F(x_k), x_k - x\rangle \leq \ECal_0 - \ECal_T + \langle s_T, x - x_0\rangle -\tfrac{1}{8}\left(\sum_{k=1}^T \|x_k - v_k\|^2\right), \quad \textnormal{for all } x \in \XCal. 
    \end{equation*}
\end{lemma}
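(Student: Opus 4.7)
The plan is to adapt the standard dual-extrapolation telescoping (as in Perseus~\cite{lin2024perseus}) to the inexact-Jacobian setting. Define $\psi_k(v) := \langle s_k, v-x_0\rangle - \tfrac{1}{2}\|v-x_0\|^2$, so that $\ECal_k = \max_{v \in \XCal}\psi_k(v)$ with unique maximizer $v_{k+1}$. Since $\psi_k$ is $1$-strongly concave, $\psi_k(v) \le \ECal_k - \tfrac{1}{2}\|v-v_{k+1}\|^2$ for every $v \in \XCal$. Combining this with the identity $\psi_{k+1}(v) = \psi_k(v) - \lambda_{k+1}\langle F(x_{k+1}), v-x_0\rangle$, taking $\max_{v\in\XCal}$, and splitting $v-x_0 = (v-x_{k+1})+(x_{k+1}-x_0)$ yields the one-step inequality
\[
\ECal_{k+1}-\ECal_k+\lambda_{k+1}\langle F(x_{k+1}), x_{k+1}-x_0\rangle \le \Lambda_{k+1},
\]
with $\Lambda_{k+1} := \max_{v \in \XCal}\bigl\{-\tfrac{1}{2}\|v-v_{k+1}\|^2 - \lambda_{k+1}\langle F(x_{k+1}), v-x_{k+1}\rangle\bigr\}$.

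The crux of the argument is to prove $\Lambda_{k+1}\le -\tfrac{1}{8}\|x_{k+1}-v_{k+1}\|^2$. Using definition~\eqref{eq:model}, I would decompose
\[
F(x_{k+1}) = \Omega^{\eta}_{v_{k+1}}(x_{k+1}) + \bigl[F(x_{k+1})-\Psi_{v_{k+1}}(x_{k+1})\bigr] - \eta\delta(x_{k+1}-v_{k+1}) - 5L_1\|x_{k+1}-v_{k+1}\|(x_{k+1}-v_{k+1})
\]
and bound the four contributions separately: the $\Omega^{\eta}$-piece via the subproblem accuracy~\eqref{eq:subproblem_cnd}; the Taylor remainder via Cauchy--Schwarz and Lemma~\ref{lm:f_bnd_inxt_2}; and the two regularization pieces jointly via the polar identity $\langle x_{k+1}-v_{k+1}, v-x_{k+1}\rangle = \tfrac{1}{2}(\|v-v_{k+1}\|^2 - \|v-x_{k+1}\|^2 - \|x_{k+1}-v_{k+1}\|^2)$. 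The regularization contributes a coefficient $\rho/2$ on $\|v-v_{k+1}\|^2$ with $\rho := \lambda_{k+1}(\eta\delta + 5L_1\|x_{k+1}-v_{k+1}\|)$. With $\eta = 10$ and the step-size window $\tfrac{1}{32} \le \lambda_{k+1}(\tfrac{L_1}{2}\|x_{k+1}-v_{k+1}\|+\delta) \le \tfrac{1}{22}$, one checks that $\rho \in [\tfrac{5}{16}, \tfrac{10}{11}]$, so the resulting coefficient $(1-\rho)/2$ on $\|v-v_{k+1}\|^2$ stays strictly positive. Finishing with Young's inequality on the Taylor cross term $\lambda_{k+1}\langle \Psi_{v_{k+1}}(x_{k+1})-F(x_{k+1}), v-x_{k+1}\rangle$ and the elementary identity $\max_{v \in \R^d}\{-a\|v-v_{k+1}\|^2 - b\|v-x_{k+1}\|^2\} = -\tfrac{ab}{a+b}\|x_{k+1}-v_{k+1}\|^2$ yields the $-\tfrac{1}{8}$ constant uniformly over admissible $\rho$.

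Summing the one-step inequality over $k=0, \dots, T-1$ telescopes $\ECal_{k+1}-\ECal_k$ to $\ECal_T-\ECal_0$ and produces
\[
\sum_{k=1}^T \lambda_k\langle F(x_k), x_k-x_0\rangle + \ECal_T - \ECal_0 \le -\tfrac{1}{8}\sum_{k=1}^T \|x_k - v_k\|^2.
\]
Since $s_0 = 0$, unrolling the recursion $s_{k+1}=s_k-\lambda_{k+1}F(x_{k+1})$ gives $\langle s_T, x-x_0\rangle = -\sum_{k=1}^T \lambda_k \langle F(x_k), x-x_0\rangle$, so adding this to both sides converts the $x_0$-referenced sum into the target sum $\sum_{k=1}^T \lambda_k\langle F(x_k), x_k - x\rangle$, delivering the lemma. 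The principal obstacle is the bound on $\Lambda_{k+1}$: simultaneously absorbing all four error sources while retaining an explicit negative multiple of $\|x_{k+1}-v_{k+1}\|^2$ is only possible because the algorithmic constants---$\eta=10$, the regularization weight $5L_1$, and the step-size window $[\tfrac{1}{32},\tfrac{1}{22}]$---are tuned so that the resulting concave quadratic in $v$ has enough margin even at the worst admissible $\rho = \tfrac{5}{16}$ to produce the factor $\tfrac{1}{8}$.
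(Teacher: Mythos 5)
Your proposal is correct and follows essentially the same route as the paper's proof: telescoping the Lyapunov function, decomposing $F(x_{k+1})$ through the model $\Omega^{\eta}_{v_{k+1}}$, invoking the subproblem condition~\eqref{eq:subproblem_cnd} and Lemma~\ref{lm:f_bnd_inxt_2}, and closing with the step-size window and $\eta=10$; the only difference is that you bound the per-step error uniformly over $v\in\XCal$ (via polarization and Young), whereas the paper evaluates the same quantity at $v=v_{k+2}$ and bounds the resulting sum, which is a bookkeeping rather than conceptual distinction. One harmless slip: with $\eta=10$ the regularization weight is $\rho=\lambda_{k+1}(10\delta+5L_1\|x_{k+1}-v_{k+1}\|)=10\,\lambda_{k+1}\bigl(\delta+\tfrac{L_1}{2}\|x_{k+1}-v_{k+1}\|\bigr)\in\bigl[\tfrac{5}{16},\tfrac{5}{11}\bigr]$, not $\bigl[\tfrac{5}{16},\tfrac{10}{11}\bigr]$, and your uniform $-\tfrac{1}{8}$ bound indeed holds on this (smaller) range.
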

\begin{proof}
    By the definition of Lyapunov function~\eqref{eq:lyapunov} and Step 2 of Algorithm~\ref{alg:main}, we have
    \begin{equation*}
        \ECal_k = \langle s_k, v_{k+1} - x_0\rangle - \tfrac{1}{2}\|v_{k+1} - x_0\|^2. 
    \end{equation*}
    Then, we have
    \begin{equation}
    \label{eq:DE-descent-first}
        \begin{gathered}
            \ECal_{k+1} - \ECal_k = \langle s_{k+1}, v_{k+2} - x_0\rangle - \langle s_k, v_{k+1} - x_0\rangle - \tfrac{1}{2}\left(\|v_{k+2} - x_0\|^2 - \|v_{k+1} - x_0\|^2\right) \\
            = \langle s_{k+1} - s_k, v_{k+1} - x_0\rangle + \langle s_{k+1}, v_{k+2} - v_{k+1}\rangle - \tfrac{1}{2}\left(\|v_{k+2} - x_0\|^2 - \|v_{k+1} - x_0\|^2\right). 
        \end{gathered}
    \end{equation}
   By the update formula for $v_{k+1}$, we get
    \begin{equation*}
        \langle x - v_{k+1}, s_k - v_{k+1} + x_0\rangle \leq 0, \quad \textnormal{for all } x \in \XCal. 
    \end{equation*}
    Letting $x = v_{k+2}$ in this inequality and using $\langle a, b\rangle = \frac{1}{2}(\|a+b\|^2 - \|a\|^2 - \|b\|^2)$, we have
    \begin{equation}\label{eq:DE-descent-second}
        \langle s_k, v_{k+2} - v_{k+1}\rangle \leq \langle v_{k+1} - x_0, v_{k+2} - v_{k+1}\rangle = \tfrac{1}{2}\left(\|v_{k+2} - x_0\|^2 - \|v_{k+1} - x_0\|^2 - \|v_{k+2} - v_{k+1}\|^2\right). 
    \end{equation}
    Plugging Eq.~\eqref{eq:DE-descent-second} into Eq.~\eqref{eq:DE-descent-first} and using Step 5 of Algorithm~\ref{alg:main}, we obtain:

    \begin{equation*}
        \begin{gathered}
            \ECal_{k+1} - \ECal_k \stackrel{\eqref{eq:DE-descent-second}}{\leq} \langle s_{k+1} - s_k, v_{k+1} - x_0\rangle + \langle s_{k+1} - s_k, v_{k+2} - v_{k+1}\rangle - \tfrac{1}{2}\|v_{k+2} - v_{k+1}\|^2 \\
            = \langle s_{k+1} - s_k, v_{k+2} - x_0\rangle - \tfrac{1}{2}\|v_{k+2} - v_{k+1}\|^2 \leq \lambda_{k+1}\langle F(x_{k+1}), x_0 - v_{k+2}\rangle - \tfrac{1}{2}\|v_{k+2} - v_{k+1}\|^2 \\
            = \lambda_{k+1} \langle F(x_{k+1}), x_0 - x\rangle + \lambda_{k+1} \langle F(x_{k+1}), x - x_{k+1}\rangle + \lambda_{k+1} \langle F(x_{k+1}), x_{k+1} - v_{k+2}\rangle - \tfrac{1}{2}\|v_{k+2} - v_{k+1}\|^2,
        \end{gathered}
    \end{equation*}
    for any $x \in \XCal$.
    Summing up this inequality over $k = 0, 1, \ldots, T-1$ and changing the counter $k+1$ to $k$ yields that 
    \begin{equation}\label{eq:DE-descent-third}
        \sum_{k=1}^T \lambda_k \langle F(x_k), x_k - x\rangle \leq \ECal_0 - \ECal_T + \underbrace{\sum_{k=1}^T \lambda_k \langle F(x_k), x_0 - x\rangle}_{\textbf{I}} + \underbrace{\sum_{k=1}^T \lambda_k \langle F(x_k), x_k - v_{k+1}\rangle - \tfrac{1}{2}\|v_k - v_{k+1}\|^2}_{\textbf{II}}. 
    \end{equation}
    Using the update formula for $s_{k+1}$ and letting $s_0 = 0_d \in \br^d$, we have
    \begin{equation}\label{inequality:DE-descent-fourth}
        \textbf{I} = \sum_{k=1}^T \langle \lambda_k F(x_k), x_0 - x\rangle = \sum_{k=1}^T \langle s_{k-1} - s_k, x_0 - x\rangle = \langle s_0 - s_T, x_0 - x\rangle = \langle s_T, x - x_0\rangle.
    \end{equation}
    Since $x_{k+1} \in \XCal$ satisfies~\eqref{eq:subproblem_cnd}, we have 
    \begin{equation}\label{eq:DE-main-update}
        \langle \Omega^\eta_{v_k}(x_k), x - x_k\rangle \geq -\tfrac{L}{2}\|x_k - v_k\|^{3} - \delta \|x_k - v_k\|^2, \quad \textnormal{for all } x \in \XCal,
    \end{equation}
    where $\Omega^\eta_v(x): \br^d \rightarrow \br^d$ is defined in~\eqref{eq:model}.
    Letting $x = v_{k+1}$ in~\eqref{eq:DE-main-update}, we have
    \begin{equation}\label{eq:DE-descent-fifth}
         \langle \Omega^\eta_{v_k}(x_k), x_k - v_{k+1}\rangle \leq \tfrac{L}{2}\|x_k - v_k\|^{3} + \delta \|x_k - v_k\|^2. 
    \end{equation}

    Then,
    \begin{equation*}
        \begin{gathered}
            \langle F(x_k), x_k - v_{k+1}\rangle \\
             =  \langle F(x_k) - \Omega^\eta_{v_k}(x_k) + \eta \delta(x_k - v_k) + 5L\|x_k - v_k\|(x_k - v_k), x_k - v_{k+1}\rangle \\ 
             + \langle \Omega^\eta_{v_k}(x_k), x_k - v_{k+1}\rangle - 5L\|x_k - v_k\| \langle x_k - v_k, x_k - v_{k+1}\rangle - \eta \delta \langle x_k - v_k, x_k - v_{k+1}\rangle \\
             \stackrel{Lem.~\eqref{lm:f_bnd_inxt_2},~\eqref{eq:DE-descent-fifth}}{\leq}  \tfrac{L}{2}\|x_k - v_k\|^2\|x_k - v_{k+1}\| + \delta\|x_k - v_k\|\|x_k - v_{k+1}\| + \tfrac{L}{2}\|x_k - v_k\|^3 + \delta \|x_k - v_k\|^2 \\ 
            - 5L\|x_k - v_k\| \langle x_k - v_k, x_k - v_{k+1}\rangle - \eta \delta \langle x_k - v_k, x_k - v_{k+1}\rangle
        \end{gathered}
    \end{equation*}

    Next, using 
    $\langle x_k - v_k, x_k - v_{k+1} \rangle \geq \|x_k - v_k\|^2 - \|x_k - v_k\|\|v_k - v_{k+1}\|$ and $\|x_k - v_{k+1}\| \leq \|x_k - v_k\| + \|v_k - v_{k+1}\|$, we get

    \begin{equation*}
        \begin{gathered}
            \langle F(x_k), x_k - v_{k+1}\rangle\\
            \leq  \tfrac{L}{2}\|x_k - v_k\|^3 + \tfrac{L}{2}\|x_k - v_k\|^2\|v_k - v_{k+1}\| + \delta\|x_k - v_k\|^2 + \delta\|x_k - v_k\|\|v_k - v_{k+1}\|  \\
            + \tfrac{L}{2}\|x_k - v_k\|^3 + \delta \|x_k - v_k\|^2 - 5L\|x_k - v_k\|^3 + 5L\|x_k - v_k\|^2\|v_k - v_{k+1}\| - \eta \delta \|x_k - v_k\|^2 \\ 
            + \eta \delta \|x_k - v_k\|\|v_k - v_{k+1}\| \\
            = \tfrac{11L}{2}\|x_k - v_k\|^2\|v_k - v_{k+1}\| - 4L\|x_k - v_k\|^3 + (\eta + 1)\delta\|x_k - v_k\|\|v_k - v_{k+1}\| - (\eta - 2)\delta\|x_k - v_k\|^2
        \end{gathered}
    \end{equation*}

    Next, 
    \begin{equation*}
        \begin{gathered}
            \textbf{II} 
            \leq \sum \limits_{k=1}^T \left ( \tfrac{11\lambda_k L}{2}\|x_k - v_k\|^2\|v_k - v_{k+1}\| - 4\lambda_k L\|x_k - v_k\|^3 \right. \\
            \left .+ (\eta + 1)\delta\lambda_k \|x_k -  v_k\|\|v_k - v_{k+1}\| - (\eta - 2)\delta\lambda_k \|x_k - v_k\|^2  - \tfrac{1}{2}\|v_k - v_{k+1}\|^2\right ) \\
            \leq \sum \limits_{k=1}^T \left( \tfrac{1}{2}\|x_k - v_k\|\|v_k - v_{k+1}\| - \tfrac{1}{4}\|x_k - v_k\|^2 - \tfrac{1}{2}\|v_k - v_{k+1}\|^2 \right),
        \end{gathered}
    \end{equation*}
    where the last inequality is due to the following choice of $\eta=10$ and $\lambda: \tfrac{1}{32} \leq \lambda_k \ls \tfrac{L}{2} \|x_k - v_k\| + \delta\rs \leq \tfrac{1}{22}$.
    Then,
    \begin{equation}
    \label{inequality:DE-descent-sixth}
        \begin{gathered}
            \textbf{II} 
            \leq \sum_{k=1}^T \left( \tfrac{1}{2}\|x_k - v_k\|\|v_k - v_{k+1}\| - \tfrac{1}{4}\|x_k - v_k\|^2 - \tfrac{1}{2}\|v_k - v_{k+1}\|^2 \right)\\
            \leq - \tfrac{1}{8} \ls \textstyle{\sum}_{k=1}^T \|x_k - v_k\|^2\rs.
        \end{gathered}
    \end{equation}

    Plugging~\eqref{inequality:DE-descent-fourth} and ~\eqref{inequality:DE-descent-sixth} into~\eqref{eq:DE-descent-third} yields that 
    \begin{equation*}
        \sum_{k=1}^T \lambda_k \langle F(x_k), x_k - x\rangle \leq \ECal_0 - \ECal_T + \langle s_T, x - x_0\rangle -\tfrac{1}{8}\left(\sum_{k=1}^T \|x_k - v_k\|^2 \right). 
    \end{equation*}
\end{proof}

\begin{lemma}\label{lm:DE-error}
    Let Assumptions~\ref{as:smooth_2},~\ref{as:minty},~\ref{as:inexact_jac} hold and let $x \in \XCal$. For every integer $T \geq 1$, we have
    \begin{equation}\label{eq:DE-error}
        \sum_{k=1}^T \lambda_k \langle F(x_k), x_k - x\rangle \leq \tfrac{1}{2}\|x - x_0\|^2, \qquad \sum_{k=1}^T \|x_k - v_k\|^2 \leq 4\|x^* - x_0\|^2, 
    \end{equation}
    where $x^* \in \XCal$ denotes the weak solution to the VI. 
\end{lemma}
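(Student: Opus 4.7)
The plan is to derive both inequalities in Lemma~\ref{lm:DE-error} from the descent inequality already established in Lemma~\ref{lm:DE-descent}, by analyzing the Lyapunov function $\ECal_k$ at its two endpoints $k=0$ and $k=T$. The key structural observation is that $\ECal_k$ is defined as a maximum of a concave quadratic over $\XCal$, so at $k=0$ with $s_0 = 0$ the maximum is attained at $v = x_0$ and gives $\ECal_0 = 0$, while at $k=T$ the defining maximum provides a pointwise lower bound on $\ECal_T$ in terms of any test point $x \in \XCal$.

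For the first inequality, I would instantiate the definition of $\ECal_T$ at the test point $x$ to get
\begin{equation*}
    \ECal_T \;\geq\; \langle s_T, x - x_0\rangle - \tfrac{1}{2}\|x - x_0\|^2,
\end{equation*}
which rearranges to $\langle s_T, x - x_0\rangle - \ECal_T \leq \tfrac{1}{2}\|x - x_0\|^2$. Plugging this, together with $\ECal_0 = 0$, into Lemma~\ref{lm:DE-descent} and discarding the nonpositive term $-\tfrac{1}{8}\sum_k \|x_k - v_k\|^2$ yields the first claim immediately.

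For the second inequality, I would apply the descent inequality with the specific choice $x = x^*$, where $x^*$ is a weak (Minty) solution to the VI. By the definition of a weak solution in~\eqref{eq:minty_problem}, applied at each iterate $x_k \in \XCal$, we have $\langle F(x_k), x_k - x^*\rangle \geq 0$ for every $k$, and since the stepsizes $\lambda_k$ are positive (by the adaptive rule in Algorithm~\ref{alg:main}), the weighted sum on the left of Lemma~\ref{lm:DE-descent} is nonnegative. Combining this with the bound $\ECal_0 - \ECal_T + \langle s_T, x^* - x_0\rangle \leq \tfrac{1}{2}\|x^* - x_0\|^2$ from the first part gives
\begin{equation*}
    0 \;\leq\; \tfrac{1}{2}\|x^* - x_0\|^2 - \tfrac{1}{8}\sum_{k=1}^T \|x_k - v_k\|^2,
\end{equation*}
which rearranges to the claimed bound $\sum_{k=1}^T \|x_k - v_k\|^2 \leq 4\|x^* - x_0\|^2$.

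There is no real obstacle here; the proof is essentially a two-line consequence of Lemma~\ref{lm:DE-descent} once one recognizes that (i) the Lyapunov function provides a Fenchel-type upper bound on $\langle s_T, x - x_0\rangle$, and (ii) the Minty condition turns the running error $\sum_k \lambda_k \langle F(x_k), x_k - x^*\rangle$ into a nonnegative quantity, so that the quadratic regularization term in the descent inequality can be bounded in absolute value. The only point that needs care is to verify that $\lambda_k > 0$ so that the sign of the weighted sum is preserved, which follows from the adaptive selection rule $\tfrac{1}{32} \leq \lambda_{k+1}(\tfrac{L_1}{2}\|x_{k+1}-v_{k+1}\| + \beta_{k+1})$.
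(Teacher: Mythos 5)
Your proof is correct and follows essentially the same route as the paper's: use $s_0=0$ to get $\ECal_0=0$, bound $\langle s_T,x-x_0\rangle-\ECal_T$ by $\tfrac12\|x-x_0\|^2$ via the defining maximum in $\ECal_T$, substitute into Lemma~\ref{lm:DE-descent} for the first claim, and then set $x=x^*$ and invoke the Minty condition together with $\lambda_k>0$ to make the weighted sum nonnegative and extract the second claim. The extra remark about positivity of $\lambda_k$ from the adaptive rule is a welcome detail the paper leaves implicit.
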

\begin{proof}
    For any $x \in \XCal$, we have
    \begin{equation*}
        \ECal_0 - \ECal_T + \langle s_T, x - x_0\rangle = \ECal_0 - \left(\max_{v \in \XCal} \ \langle s_T, v - x_0\rangle - \tfrac{1}{2}\|v - x_0\|^2\right) + \langle s_T, x - x_0\rangle. 
    \end{equation*}
    Since $s_0 = 0_d$, we have $\ECal_0 = 0$ and 
    \begin{equation*}
        \ECal_0 - \ECal_T + \langle s_T, x - x_0\rangle \leq - \left(\langle s_T, x - x_0\rangle - \tfrac{1}{2}\|x - x_0\|^2\right) + \langle s_T, x - x_0\rangle = \tfrac{1}{2}\|x - x_0\|^2. 
    \end{equation*}
    This together with Lemma~\ref{lm:DE-descent} yields that 
    \begin{equation*}
        \sum_{k=1}^T \lambda_k \langle F(x_k), x_k - x\rangle + \tfrac{1}{8}\left(\sum_{k=1}^T \|x_k - v_k\|^2\right) \leq \tfrac{1}{2}\|x - x_0\|^2, \quad \textnormal{for all } x \in \XCal, 
    \end{equation*}
    which implies the first inequality. Since the VI satisfies the Minty condition, there exists $x^* \in \XCal$ such that $\langle F(x_k), x_k - x^*\rangle \geq 0$ for all $k \geq 1$. Letting $x = x^*$ in the above inequality yields the second inequality. 
\end{proof}

\begin{lemma}\label{lm:DE-control}
    Let Assumptions~\ref{as:smooth_2},~\ref{as:minty},~\ref{as:inexact_jac} hold. For every integer $T \geq 1$, we have
    \begin{equation}\label{eq:DE-control}
        \frac{1}{\ls \sum_{k=1}^T \lambda_k \rs^2} \leq \frac{2048 L^2 \|x^* - x_0\|^2}{T^3} + \frac{2048 \delta^2}{T^2}  
    \end{equation}
    where $x^* \in \XCal$ denotes the weak solution to the VI. 
\end{lemma}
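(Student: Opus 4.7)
The plan is to lower bound $\sum_{k=1}^T \lambda_k$ by combining the algorithm's adaptive rule with two applications of Cauchy--Schwarz and the $\ell^2$-type bound on the steps $\|x_k - v_k\|$ from Lemma~\ref{lm:DE-error}. Throughout we use $\beta_k = \delta$ as in Theorem~\ref{thm:monotone}.

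First I would invert the upper bound in Step~6 of Algorithm~\ref{alg:main}: the rule $\lambda_k(\tfrac{L}{2}\|x_k-v_k\| + \delta) \leq \tfrac{1}{22}$ implies
\[
    \tfrac{1}{\lambda_k} \;\leq\; 32\!\left(\tfrac{L}{2}\|x_k-v_k\| + \delta\right) \;=\; 16L\,\|x_k - v_k\| + 32\,\delta.
\]
Summing gives
\[
    \textstyle\sum_{k=1}^T \tfrac{1}{\lambda_k} \;\leq\; 16L \textstyle\sum_{k=1}^T \|x_k - v_k\| + 32\,T\,\delta,
\]
and a Cauchy--Schwarz step together with the second inequality in~\eqref{eq:DE-error} turns the first term into $\sqrt{T}$: specifically,
\[
    \textstyle\sum_{k=1}^T \|x_k - v_k\| \;\leq\; \sqrt{T}\,\Bigl(\textstyle\sum_{k=1}^T \|x_k - v_k\|^2\Bigr)^{1/2} \;\leq\; 2\sqrt{T}\,\|x^* - x_0\|.
\]
Hence $\sum_{k=1}^T \tfrac{1}{\lambda_k} \leq 32L\sqrt{T}\,\|x^*-x_0\| + 32T\,\delta$.

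Next I would apply the Cauchy--Schwarz inequality in the reciprocal form
\[
    T^2 \;=\; \Bigl(\textstyle\sum_{k=1}^T \sqrt{\lambda_k}\cdot\tfrac{1}{\sqrt{\lambda_k}}\Bigr)^{\!2} \;\leq\; \Bigl(\textstyle\sum_{k=1}^T \lambda_k\Bigr)\Bigl(\textstyle\sum_{k=1}^T \tfrac{1}{\lambda_k}\Bigr),
\]
which yields
\[
    \tfrac{1}{\sum_{k=1}^T \lambda_k} \;\leq\; \tfrac{1}{T^2}\textstyle\sum_{k=1}^T \tfrac{1}{\lambda_k} \;\leq\; \tfrac{32 L\,\|x^* - x_0\|}{T^{3/2}} + \tfrac{32\,\delta}{T}.
\]
Finally, squaring and using the elementary inequality $(a+b)^2 \leq 2a^2 + 2b^2$ produces
\[
    \tfrac{1}{(\sum_{k=1}^T \lambda_k)^2} \;\leq\; 2\!\left(\tfrac{32 L\,\|x^*-x_0\|}{T^{3/2}}\right)^{\!2} \!+\! 2\!\left(\tfrac{32\,\delta}{T}\right)^{\!2} \;=\; \tfrac{2048\,L^2 \|x^*-x_0\|^2}{T^3} + \tfrac{2048\,\delta^2}{T^2},
\]
which is exactly~\eqref{eq:DE-control}. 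There is no real obstacle here; the only subtlety is recognizing that the two error terms in the statement come from the two contributions ($L\|x_k-v_k\|$ and $\delta$) in the adaptive rule, and that Cauchy--Schwarz is the right vehicle to convert the $\ell^2$ bound on $\|x_k - v_k\|$ from Lemma~\ref{lm:DE-error} into the $\sqrt{T}$ factor visible in the $T^{3/2}$ denominator.
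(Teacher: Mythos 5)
Your argument is correct and yields exactly the bound \eqref{eq:DE-control}, but it travels a slightly different road than the paper. The paper works with $\sum_{k=1}^T \lambda_k^{-2}$: it squares the lower half of the adaptive rule, bounds $\sum_k (\tfrac{L}{2}\|x_k-v_k\|+\delta)^2 \leq 2L^2\|x^*-x_0\|^2 + 2T\delta^2$ via Lemma~\ref{lm:DE-error}, and then applies H\"older with exponents $(3,\,3/2)$ in the form $T = \sum_k (\lambda_k^{-2})^{1/3}\lambda_k^{2/3} \leq (\sum_k \lambda_k^{-2})^{1/3}(\sum_k\lambda_k)^{2/3}$. You instead work with $\sum_k \lambda_k^{-1}$, using one Cauchy--Schwarz to convert $\sum_k\|x_k-v_k\|$ into $\sqrt{T}\,(\sum_k\|x_k-v_k\|^2)^{1/2}$ and a second (harmonic-mean) Cauchy--Schwarz $T^2 \leq (\sum_k\lambda_k)(\sum_k\lambda_k^{-1})$; remarkably both routes give the same constant $2048$. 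Your version is arguably more elementary (only Cauchy--Schwarz), while the paper's single H\"older step with exponent $2/(p-1)$ is the one that generalizes verbatim to the tensor case in Appendix~\ref{app:tensor}, which is presumably why the authors phrase it that way. One small slip to fix: you attribute the inequality $\tfrac{1}{\lambda_k} \leq 32\bigl(\tfrac{L}{2}\|x_k-v_k\|+\delta\bigr)$ to the upper half of the rule, $\lambda_k(\tfrac{L}{2}\|x_k-v_k\|+\delta)\leq\tfrac{1}{22}$, but that half gives the reverse bound $\tfrac{1}{\lambda_k}\geq 22(\cdots)$; the inequality you actually use (with the factor $32$) follows from the lower half, $\tfrac{1}{32}\leq\lambda_k(\tfrac{L}{2}\|x_k-v_k\|+\delta)$. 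The stated inequality and all subsequent steps are correct, so this is a mis-citation rather than a gap.
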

\begin{proof}
    Without loss of generality, we assume that $x_0 \neq x^*$. We have
    \begin{gather*}
        \sum_{k=1}^T (\lambda_k)^{-2}(\tfrac{1}{32})^{2} \leq \sum_{k=1}^T (\lambda_k)^{-2}\ls\lambda_k\ls \tfrac{L}{2}\|x_k - v_k\| + \delta\rs\rs^{2} = \sum_{k=1}^T \ls \tfrac{L}{2}\|x_k - v_k\|  + \delta \rs^2 \\
        \leq \sum_{k=1}^T \tfrac{L^2}{2}\|x_k - v_k\|^2 + 2T\delta^2 \overset{\text{Lemma}~\ref{lm:DE-error}}{\leq} 2L^2\|x^* - x_0\|^2 + 2T\delta^2.  
    \end{gather*}
    By the H\"{o}lder inequality, we have
    \begin{equation*}
        \sum_{k=1}^T 1 = \sum_{k=1}^T \left((\lambda_k)^{-2}\right)^{1/3} (\lambda_k)^{2/3} \leq \left(\sum_{k=1}^T (\lambda_k)^{-2}\right)^{1/3}\left(\sum_{k=1}^T \lambda_k\right)^{2/3}.
    \end{equation*}
    Putting these pieces together yields that 
    \begin{equation*}
        T \leq 32^{2/3} (2L^2\|x^* - x_0\|^2 + 2\delta^2T)^{\frac{1}{3}} \ls\sum_{k=1}^T \lambda_k\rs^{2/3} ,
    \end{equation*}
    Plugging this into the above inequality yields that 
    \begin{equation*}
        \frac{1}{\ls \sum_{k=1}^T \lambda_k \rs^2} \leq \frac{2048 L^2 \|x^* - x_0\|^2}{T^3} + \frac{2048\delta^2}{T^2}   
    \end{equation*}
\end{proof}

\begin{customthm}{\ref{thm:monotone}}
    Let Assumptions~\ref{as:smooth_2},~\ref{as:monotone},~\ref{as:inexact_jac}. Then, after $T \geq 1$ iterations of \algo\ with parameters $\beta = \delta, ~\eta = 10, ~ \textsf{opt}=0$, we get the following bound
    \begin{equation*}
        \textsc{gap}(\tilde{x}_T) = \sup_{x \in \XCal} \ \langle F(x), \tilde{x}_T - x\rangle \leq \tfrac{16\sqrt{2}LD^3}{T^{3/2}} + \tfrac{16\sqrt{2}\delta D^2}{T}.
    \end{equation*}
\end{customthm}
\begin{proof}
    Letting $x \in \XCal$, we derive from the monotonicity of $F$ and the definition of $\tilde{x}_T$ (i.e., $\textsf{opt} = 0$) that
    \begin{equation*}
        \langle F(x), \tilde{x}_T - x\rangle = \tfrac{1}{\sum_{k=1}^T \lambda_k}\left(\sum_{k=1}^T \lambda_k \langle F(x), x_k - x\rangle\right) . 
    \end{equation*}
    Combining this inequality with the first inequality in Lemma~\ref{lm:DE-error} yields that
    \begin{equation*}
        \langle F(x), \tilde{x}_T - x\rangle \leq \tfrac{\|x - x_0\|^2}{2(\sum_{k=1}^T \lambda_k)}, \quad \textnormal{for all } x \in \XCal. 
    \end{equation*}
    Since $x_0 \in \XCal$, we have $\|x - x_0\| \leq D$ and hence
    \begin{equation*}
        \langle F(x), \tilde{x}_T - x\rangle \leq \tfrac{D^2}{2(\sum_{k=1}^T \lambda_k)}, \quad \textnormal{for all } x \in \XCal. 
    \end{equation*}
    Then, we combine Lemma~\ref{lm:DE-control} and the fact that $\|x^* - x_0\| \leq D$ to obtain that 
    \begin{equation*}
        \langle F(x), \tilde{x}_T - x\rangle \leq \frac{D^2}{2}\sqrt{\frac{2048 L^2D^2}{T^3} + \frac{2048 \delta^2}{T^2}} \leq \frac{16\sqrt{2}LD^3}{T^{3/2}} + \frac{16\sqrt{2}\delta D^2}{T}, \quad \textnormal{for all } x \in \XCal. 
    \end{equation*}
    By the definition of a gap function.~\eqref{eq:cc-gap}, we have
    \begin{equation}\label{inequality:monotone-global-main}
        \textsc{gap}(\tilde{x}_T) = \sup_{x \in \XCal} \ \langle F(x), \tilde{x}_T - x\rangle \leq \tfrac{16\sqrt{2}LD^3}{T^{3/2}} + \tfrac{16\sqrt{2}\delta D^2}{T}. 
    \end{equation}
\end{proof}

\subsection{Proof of Theorem~\ref{thm:monotone_exact}}
    We directly follow the steps of the proof of Theorem~\ref{thm:monotone}. Lemmas~\ref{lm:DE-descent},~\ref{lm:DE-error} remain the same. Because of the choice of $\beta_{k+1} = \tfrac{L_1}{2}\|x_{k+1} - v_{k+1}\|$ adaptive strategy for $\lambda_{k+1}$ looks as follows: $\frac{1}{32} \leq L\|x_{k+1} - v_{k+1}\| \leq \frac{1}{22}$. Next Lemma is a counterpart of Lemma~\ref{lm:DE-control}.
    
    \begin{lemma}\label{lm:DE-control-exact}
        Let Assumptions~\ref{as:smooth_2},~\ref{as:minty},~\ref{as:inexact_jac} hold. For every integer $T \geq 1$, we have
        \begin{equation}\label{eq:DE-control-exact}
            \frac{1}{\sum_{k=1}^T \lambda_k } \leq \frac{64 L \|x^* - x_0\|}{T^{3/2}} 
        \end{equation}
        where $x^* \in \XCal$ denotes the weak solution to the VI. 
    \end{lemma}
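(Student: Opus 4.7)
The plan is to mirror the proof of Lemma~\ref{lm:DE-control}, exploiting the fact that when $\beta_k=\tfrac{L}{2}\|x_k-v_k\|$, the adaptive rule in Step~5 of Algorithm~\ref{alg:main} collapses to the single-term bound $\tfrac{1}{32}\leq \lambda_k L\|x_k-v_k\|\leq \tfrac{1}{22}$. This means the upper bound on $\lambda_k$ no longer contains an additive $\delta$ piece, so the Hölder step that produced the mixed rate $L^2/T^3+\delta^2/T^2$ in Lemma~\ref{lm:DE-control} will instead yield a single clean rate of order $L^2/T^3$, which after taking square roots is exactly the claimed $1/\sum_k\lambda_k\leq 64 L\|x^*-x_0\|/T^{3/2}$.

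First I would verify that Lemmas~\ref{lm:DE-descent} and \ref{lm:DE-error} continue to hold in this regime, so that $\sum_{k=1}^{T}\|x_k-v_k\|^2\leq 4\|x^*-x_0\|^2$ is still available. Condition~\eqref{eq:delta_exact} tightens Lemma~\ref{lm:f_bnd_inxt_2} into $\|F(x_k)-\Psi_{v_k}(x_k)\|\leq L\|x_k-v_k\|^2$, which absorbs the linear $\delta\|x-v\|$ term into the quadratic one. With the new $\beta_k$, every intermediate estimate in the proof of Lemma~\ref{lm:DE-descent} becomes homogeneous of degree three in $\|x_k-v_k\|$, and a quick bookkeeping check of the coefficients $\tfrac{11L}{2}$, $4L$, and the $(\eta\pm c)\delta$-terms shows that the descent quantity $\mathbf{II}\leq -\tfrac18\sum_k\|x_k-v_k\|^2$ still goes through (absorbing any remaining $\delta$ via $\delta_k\leq \tfrac{L}{2}\|x_k-v_k\|$).

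Once Lemma~\ref{lm:DE-error} is in hand, I would invert the lower end of the new adaptive rule to obtain $(\lambda_k)^{-1}\leq 32L\|x_k-v_k\|$, hence $(\lambda_k)^{-2}\leq 1024 L^2\|x_k-v_k\|^2$. Summing over $k=1,\dots,T$ and plugging in the bound from Lemma~\ref{lm:DE-error} gives $\sum_{k=1}^T (\lambda_k)^{-2}\leq 4096 L^2\|x^*-x_0\|^2$. Then the same Hölder manipulation as in Lemma~\ref{lm:DE-control}, namely $T=\sum_k((\lambda_k)^{-2})^{1/3}(\lambda_k)^{2/3}\leq (\sum_k(\lambda_k)^{-2})^{1/3}(\sum_k\lambda_k)^{2/3}$, yields $T^3\leq 4096 L^2\|x^*-x_0\|^2\bigl(\sum_k\lambda_k\bigr)^2$. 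Taking square roots produces the stated bound.

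The main obstacle I anticipate is the bookkeeping in the first step: one needs to reconfirm that the constants used in the proof of Lemma~\ref{lm:DE-descent}, calibrated for $\beta_k=\delta$ and $\eta=10$, still give a non-positive $\mathbf{II}$ with the margin $-\tfrac{1}{8}\sum_k\|x_k-v_k\|^2$ under the substitution $\beta_k=\tfrac{L}{2}\|x_k-v_k\|$. Once that coefficient check is done, everything else reduces to the straightforward Hölder computation above, and the key structural payoff — losing the $\delta^2/T^2$ summand from Lemma~\ref{lm:DE-control} — is automatic from the single-term form of the adaptive rule.
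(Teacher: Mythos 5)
Your proof is correct and follows essentially the same route as the paper: invert the lower end of the step rule $\tfrac{1}{32}\leq \lambda_k L\|x_k-v_k\|$ to get $\sum_k \lambda_k^{-2}\leq 4096\,L^2\|x^*-x_0\|^2$ via Lemma~\ref{lm:DE-error}, then apply the same H\"older inequality to conclude. The paper simply asserts that Lemmas~\ref{lm:DE-descent} and~\ref{lm:DE-error} carry over under $\beta_k=\tfrac{L}{2}\|x_k-v_k\|$ and $\delta_k\leq\tfrac{L}{2}\|x_k-v_k\|$, which is the same coefficient check you describe.
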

    \begin{proof}
        Without loss of generality, we assume that $x_0 \neq x^*$. We have
        \begin{gather*}
            \sum_{k=1}^T (\lambda_k)^{-2}(\tfrac{1}{32})^{2} \leq \sum_{k=1}^T (\lambda_k)^{-2}\ls\lambda_k\ls L\|x_k - v_k\|\rs\rs^{2} 
            \leq \sum_{k=1}^T L^2\|x_k - v_k\|^2 \overset{\text{Lemma}~\ref{lm:DE-error}}{\leq} 4L^2\|x^* - x_0\|^2 .  
        \end{gather*}
        By the H\"{o}lder inequality, we have
        \begin{equation*}
            \sum_{k=1}^T 1 = \sum_{k=1}^T \left((\lambda_k)^{-2}\right)^{1/3} (\lambda_k)^{2/3} \leq \left(\sum_{k=1}^T (\lambda_k)^{-2}\right)^{1/3}\left(\sum_{k=1}^T \lambda_k\right)^{2/3}.
        \end{equation*}
        Putting these pieces together yields that 
        \begin{equation*}
            T \leq 32^{2/3} (4L^2\|x^* - x_0\|^2)^{\frac{1}{3}} \ls\sum_{k=1}^T \lambda_k\rs^{2/3} .
        \end{equation*}
        Plugging this into the above inequality yields that 
        \begin{equation*}
            \frac{1}{ \sum_{k=1}^T \lambda_k } \leq \frac{64 L \|x^* - x_0\|}{T^{3/2}} .
        \end{equation*}
    \end{proof}
    
    Then, by following the rest of the proof of Theorem~\ref{thm:monotone}, we get
    \begin{customthm}{\ref{thm:monotone_exact}}
        Let Assumptions~\ref{as:smooth_2},~\ref{as:monotone} hold. Let $\{x_k, v_k\}$ be iterates generated by Algorithm~\ref{alg:main} and
        \begin{equation*}
            \|(\nabla F(v_k) - J(v_k))[z_k - v_k]\| \leq \delta_k\|z_k - v_k\|, \quad \delta_k \leq \tfrac{L_1}{2}\|x_k - v_k\|.
        \end{equation*}
        Then, after $T \geq 1$ iterations of Algorithm~\ref{alg:main} with parameters $\beta_k= \tfrac{L_1}{2}\|x_k - v_k\|,~ \eta = 10, ~ \textsf{opt}=0$, we get the following bound
        \begin{equation*}
            \textsc{gap}(\tilde{x}_T) = \sup_{x \in \XCal} \ \langle F(x), \tilde{x}_T - x\rangle \leq \tfrac{32 L D^3}{T^{3/2}}.
        \end{equation*}
    \end{customthm}
    
\section{Proofs of Theorems \ref{thm:nonmonotone},~ \ref{thm:nonmonotone_exact}}
    In this section, we let $L := L_1$.
    \begin{customthm}{\ref{thm:nonmonotone}}
        Let Assumptions~\ref{as:smooth_2},~\ref{as:minty},~\ref{as:inexact_jac} hold.
        Then after $T \ge 1$ iterations of Algorithm~\ref{alg:main} with parameters $\beta_k = \delta, \eta=10, \textsf{opt}=2$ we get the following bound
        \begin{equation}
            \res(\hat x) := \sup_{x \in \XCal} \la F(\hat x_T), \hat x_T - x \ra = O\left(\frac{L D^3}{T} + \frac{\delta D^2}{\sqrt T} \right).
        \end{equation}
    \end{customthm}
    \begin{proof}
        Since we consider $\textsf{opt} = 2$, then
        \[
            \res(\hat x) = \res(x_{k_T}) = \sup_{x \in \XCal} \la F(x_{k_T}), x_{k_T} - x \ra.
        \]

        From \eqref{eq:DE-descent-fifth} we get
        \begin{equation}
            \label{eq:nonmonotone:first estimation}
            \begin{gathered}
                \la F(x_k), x_k - x \ra 
                = \la F(x_k) - \Omega^\eta_{v_k}(x_k), x_k - x \ra + \la \Omega^\eta_{v_k}(x_k), x_k - x \ra  \\
                \stackrel{\eqref{eq:DE-descent-fifth}}{\le} \|F(x_k) - \Omega^\eta_{v_k}(x_k)\| \|x_k - x\| + \frac{L}{2} \|x_k - v_k\|^3 + \delta \|x_k - v_k\|^2. 
            \end{gathered}
        \end{equation}

        Next, from triangle inequality we have
        \begin{gather*}
            \|F(x_k) - \Psi_{v_k}(x_k)\| = \left\|F(x_k) - \Omega_{v_k}^\eta(x_k) + \eta \delta (x_k - v_k) + \frac{5L}{2}\|x_k - v_k\|(x_k - v_k) \right\| \\
            \ge \|F(x_k) - \Omega_{v_k}^\eta(x_k)\| - \eta \delta \|x_k - v_k\| - \frac{5L}{2}\|x_k - v_k\|^2.
        \end{gather*}
        From this and \eqref{eq:inexact smoothness} we get
        \begin{equation}
        \label{eq:nonmonotone:func - approx upper bound}
            \begin{gathered}
                \|F(x_k) - \Omega^\eta_{v_k}(x_k) \| \le \frac{L}{2} \|x_k - v_k\|^2 + \delta \|x_k - v_k\| + \eta \delta \|x_k - v_k\| + \frac{5L}{2}\|x_k - v_k\|^2 \notag \\
                = 3L \|x_k - v_k\|^2 + \delta (\eta + 1) \|x_k - v_k\|. 
            \end{gathered}
        \end{equation}

        Now, we can return to \eqref{eq:nonmonotone:first estimation}:
        \begin{gather*}
            \la F(x_k), x_k - x \ra \\
            \stackrel{\eqref{eq:nonmonotone:first estimation}, \eqref{eq:nonmonotone:func - approx upper bound}}{\le} 3L \|x_k - v_k\|^2 \|x_k - x\| + \delta (\eta + 1) \|x_k - v_k\| \|x_k - x\| + \frac{L}{2}\|x_k - v_k\|^3 + \delta \|x_k - v_k\|^2 \\
            = L \|x_k - v_k\|^2 \left(3 \|x_k - x\| + \frac{1}{2}\|x_k - v_k\|\right) + \delta \|x_k - v_k\| \left( (\eta + 1) \|x_k - x\| + \|x_k - v_k\| \right).
        \end{gather*}
        Since $D := \max_{x, y \in \XCal} \|x - y\|$,
        \begin{equation}\label{eq:nonmonotone:last estimation}
            \la F(x_k), x_k - x \ra \le \frac{7}{2} LD \|x_k - v_k\|^2 + \delta(\eta +2) \|x_k - v_k\|.
        \end{equation}

        Next, from second inequality from \eqref{eq:DE-error} and from definition of $x_{k_T}$ in Algorithm \ref{alg:main} we obtain
        \begin{equation}\label{eq:nonmonotone:from DE-error}
            \|x_{k_T} - v_{k_T}\|^2 \equiv \min_{1 \le k \le T} \|x_k - v_k\|^2 \overset{\eqref{eq:DE-error}}{\le} \frac{1}{T} \sum_{k=1}^T \|x_k - v_k\|^2 \le \frac{4 \|x^\ast - x_0\|^2}{T}.
        \end{equation}
        Since \eqref{eq:nonmonotone:last estimation} holds for any $x \in \XCal$, we get final result
        \begin{gather*}
            \res(x_{k_T}) := \sup_{x \in \XCal} \la F(x_{k_t}), x_{k_T} - x \ra \overset{\eqref{eq:nonmonotone:last estimation}, \eqref{eq:nonmonotone:from DE-error}}{\le} \frac{14LD \|x^\ast - x_0\|^2}{T} + \frac{2 \delta (\eta + 2) D \|x^\ast - x_0\|}{\sqrt{T}} \\
            \le \frac{14LD^3}{T} + \frac{2\delta(\eta + 2)D^2}{\sqrt{T}} = \frac{14LD^3}{T} + \frac{24 D^2}{\sqrt{T}}.
        \end{gather*}
    \end{proof}

    \begin{customthm}{\ref{thm:nonmonotone_exact}}
        Let Assumptions~\ref{as:smooth_2},~\ref{as:minty} hold. Let $\{x_k, v_k\}$ be iterates generated by Algorithm~\ref{alg:main} that satisfy~\eqref{eq:delta_exact}.
        Then after $T \ge 1$ iterations of Algorithm~\ref{alg:main} with parameters $\beta_k=\tfrac{L}{2}\|x_k - v_k\|, ~\eta=10, \textsf{opt}=2$ we get the following bound
        \begin{equation}
            \res(\hat x) := \sup_{x \in \XCal} \la F(\hat x_T), \hat x_T - x \ra = \tfrac{38L_1 D^3}{T}.
        \end{equation}
    \end{customthm}
    The proof of this theorem repeats the proof of Theorem~\ref{thm:nonmonotone} with $\delta \leq \tfrac{L}{2}\|x_k - v_k\|$.
\section{Proof of Theorem~\ref{thm:lower_bound}}
\begin{customthm}{\ref{thm:lower_bound}}
        Let some first-order method $\mathcal{M}$ satisfy Assumption \ref{as:lower_bound} and have access only $\delta$-inexact first-order oracle~\ref{eq:oracle}. Assume the method $\mathcal{M}$ ensures for any $L_0$-zero-order smooth and $L_1$-first-order smooth monotone operator $F$ the following convergence rate 
        \begin{equation*}
           \textsc{gap}(\hat{x})
           \leq O(1) \max \lb \tfrac{\delta D^{2}}{\Xi_1(T)};  
            \tfrac{L_1D^{3}}{\Xi_2(T)}\rb.
        \end{equation*}     
        Then for all $T\geq 1$ we have
        \begin{equation*}
            \Xi_1 (T) \leq T, \qquad \Xi_2 (T) \leq T^{3/2}.
        \end{equation*}
\end{customthm}
\begin{proof}
    We prove this Theorem by contradiction. Assume the existence of a method $\mathcal{M}$ that satisfies the conditions of Theorem \ref{thm:lower_bound} and achieves faster rate in one of the terms~\eqref{eq:lower_bound_convergence_powers}.\\
    First, suppose $\Xi_1 (T) > T$. Consider the first-order lower bound from~\cite{ouyang2021lower}, which is established using a quadratic min-max problem as the worst-case function. In this scenario, the operator has a $0$-Lipschitz continuous Jacobian. Applying first-order method $\mathcal{M}$ to this lower bound and using an inexact Jacobian $J(x) = L_0 I_{d \times d}$, yields the rate $O \ls \tfrac{L_0D^2}{\Xi_1(T)}\rs, ~\Xi_1(T)>T$, which is faster than the lower bound $\Omega \ls \tfrac{L_0 D^2}{T} \rs$, contradicting our assumption.\\
    Secondly, let us assume $\Xi_2 (T) > T^{3/2}$. The lower bound for exact second-order methods is $\Omega\ls \tfrac{L_1 D^3}{T^{3/2}}\rs$~\cite{lin2024perseus}. By taking exact Jacobian in the method $\mathcal{M}$ ($\delta = 0$), we place $\mathcal{M}$ in the class of exact second-order methods. Consequently, we obtain a contradiction with the lower bound.
\end{proof}

\section{Proof of Theorem \ref{thm:broyd}}
\begin{proof}
The proof is common for both formulas \eqref{eq:l-broyd} and \eqref{eq:l-broyd_damped}, where $\alpha=1$ for classical L-Broyden approximation and $\alpha = m+1$ for Damped L-Broyden approximation. First, from $L_0$-zero-order smoothness, get
\begin{equation*}
    \|\nabla F(x) - J_x\|_\op \leq \| \nabla F(x)\|_{op} + \|J_x\|_{op} \leq L_0 + \|J_x\|_{op}
\end{equation*}
Now, we upper-bound $\|J_x\|_{op}=\|J^m\|_{op}$ by induction:
\begin{gather*}
    \|J^{i+1}\|_{op} \leq \|J^{i} +  \tfrac{(y_i - J^{i} s_i) s_i^{\top}}{\alpha s_i^{\top} s_i} \|_{op} \leq \|J^{i}\ls I - \tfrac{s_i s_i^{\top}}{\alpha s_i^{\top} s_i}\rs  +  \tfrac{y_i s_i^{\top}}{\alpha s_i^{\top} s_i} \|_{op} \\
    \leq \|J^{i}\ls I - \tfrac{s_i s_i^{\top}}{\alpha s_i^{\top} s_i}\rs\|_{op}  +  \|\tfrac{y_i s_i^{\top}}{\alpha s_i^{\top} s_i} \|_{op}
    \leq \|J^{i}\|_{op} \| I - \tfrac{s_i s_i^{\top}}{\alpha s_i^{\top} s_i} \|_{op} + \tfrac{ \|y_i\| \|s_i\| }{\alpha s_i^{\top} s_i} \leq \|J^{i}\|_{op} + \tfrac{L_0}{\alpha},
\end{gather*}
where the last inequality is coming from $L_0$-zero-order smoothness for operator difference or JVP.
By summing up the previous inequality for $i$ in $0,\ldots, m-1$, we get $\|J^{m}\|_{op}\leq \|J^{0}\|_{op} + \tfrac{m L_0}{\alpha}$. Finally, we prove the result of Theorem \ref{thm:broyd}.
\end{proof}
\section{Proof of Theorem \ref{thm:strongly monotone restarts}}
    In this section, we let $L := L_1$.
    \begin{customthm}{\ref{thm:strongly monotone restarts}}
        Let Assumptions~\ref{as:smooth_2},~\ref{as:inexact_jac},~\ref{as:strongly monotone} hold. Then the total number of iterations of Algorithm~\ref{alg:main restarted} to reach desired accuracy $\|z_s - x^*\| \le \e$  is
        \[
            O \ls \ls \tfrac{LD}{\mu} \rs^\frac{2}{3} + \ls \tfrac{\delta}{\mu} + 1 \rs \nrestarts \rs.
        \]
    \end{customthm}
    \begin{proof}
        From the definition of $\tilde x_T$, Jensen inequality, strong monotonicity \eqref{eq:strongly monotone}, definition of strong minty problem \eqref{eq:strong_problem} and Lemmas \ref{lm:DE-error}, \ref{lm:DE-control} we get
        \[
        \begin{gathered}
            \mu \norm{\tilde x_T - x^*}^2 = \mu \left\| \frac{1}{\sum_{k=1}^T \lambda_t} \sum_{k=1}^T \ls\lambda_k x_k - \lambda_k x^*\rs \right\|^2 \\
            \le \frac{\mu}{\sum_{k=1}^T \lambda_t} \sum_{k=1}^T \lambda_k \|x_k - x^*\|^2 \\
            \overset{\eqref{eq:strongly monotone}}{\le} \frac{1}{\sum_{k=1}^T \lambda_t} \sum_{k=1}^T \lambda_k \la F(x_k) - F(x^*), x_k - x^* \ra \\
            \overset{\eqref{eq:strong_problem}}{\le} \frac{1}{\sum_{k=1}^T \lambda_t} \sum_{k=1}^T \lambda_k \la F(x_k), x_k - x^* \ra \\
            \overset{\eqref{eq:DE-error}, \eqref{eq:DE-control}}{\le} \left( \frac{2048 L^2 \norm{x_0 - x^*}^2}{T^3} + \frac{2048 \delta^2}{T^2} \right)^\frac{1}{2} \cdot \frac{1}{2} \norm{x_0 - x^*}^2 \\
            \le \left( 2 \max \left\{ \frac{2048 L^2 \|x_0 - x^*\|^2}{T_i^3}, \frac{2048 \delta^2}{T_i^2} \right\} \right)^\frac{1}{2} \cdot \frac{1}{2} \|x_0 - x^*\|^2.
        \end{gathered}
        \]

        Denote $R := D,\ R_i = \frac{R}{2^{i}},\ i \ge 1$.
        Now we run Algorithm \ref{alg:main} in cycle for $i=1,..., n$ and restart it every time its distance to the solution becomes at least twice less than $R_{i-1}$. 
        Thus, let $T_i$ be number of iterations we run Algorithm \ref{alg:main} inside cycle of Algorithm \ref{alg:main restarted}.
        In other words, let $T_i$ be such that $\left\|\tilde{x}_{T_{i-1}}-x^*\right\| \le \frac{R_{i-1}}{2}$ where $\tilde{x}_{T_{i+1}}$ is the point, where we restart Algorithm \ref{alg:main}. 
        Then the number of iterations before the $i$-th restart is
        \begin{equation}\label{eq:restarts:distance upper bound}
        \begin{gathered}
            \mu \norm{\tilde x_{T_i} - x^*}^2 \le \left( 2 \max \left\{ \frac{2048 L^2 R_{i-1}^2}{T_i^3}, \frac{2048 \delta^2}{T_i^2} \right\} \right)^\frac{1}{2} \cdot \frac{1}{2} R_{i-1}^2  \\
            \le \frac{\mu R_{i-1}^2}{4} \Leftrightarrow
        \end{gathered}
        \end{equation}
        \[
            \Leftrightarrow \left( \max \left\{ \frac{2048 L^2 R_{i-1}^2}{T_i^3}, \frac{2048 \delta^2}{T_i^2} \right\} \right)^\frac{1}{2} \le \frac{\mu}{2 \sqrt{2}}.
        \]
        Deriving $T_i$ for each case under $\max$, we get
        \[
        \begin{cases}
            T_i \ge \frac{2^\frac{14}{3} L^\frac{2}{3} R_{i-1}^\frac{2}{3}}{\mu^\frac{2}{3}} \\
            T_i \ge \frac{2^{7} \delta}{\mu}.
        \end{cases}
        \]
        Thus,
        \begin{equation}\label{eq:restarts:T_i}
            T_i = \left\lceil 
                \max \left\{  
                    \frac{2^\frac{14}{3} L^\frac{2}{3} R_{i-1}^\frac{2}{3}}{\mu^\frac{2}{3}},
                    \frac{2^7 \delta}{\mu}
                \right\} 
            \right\rceil
        \end{equation}

        Now we calculate the total number of restarts to reach $\|\tilde x_{T_n} - x^*\| \le \e$.
        From \eqref{eq:restarts:distance upper bound} we can get that
        \[
        \begin{gathered}
            \|\tilde x_{T_n} - x^*\| 
            \le \sqrt{\frac{1}{\mu} \left( 
                2 \max \left\{ 
                    \frac{2048 L^2 R^2_{n-1}}{T_n^3}, 
                    \frac{2048 \delta^2}{T_n^2}
                \right\} 
            \right)^\frac{1}{2} \cdot \frac{1}{2} R^2_{n-1}} \\
            \overset{\eqref{eq:restarts:T_i}}{\le} \frac{R_{n-1}}{\sqrt{2 \mu}} \left( 2 \max \left\{ \frac{\mu^2}{8}, \frac{\mu^2}{8} \right\} \right)^\frac{1}{4} \\
            = \frac{R_{n-1}}{2} \\
            = R \cdot 2^{-(n-1) - 1}\le \e.
        \end{gathered}
        \]
        Deriving $n$ from last inequality, we get
        \begin{equation}\label{eq:restarts:n}
            n = \left\lceil  \log \frac{R}{\e} \right\rceil.
        \end{equation}

        Now we provide auxiliary estimation, that we will need next:
        \begin{equation}\label{eq:restarts:sum of R estimation}
        \begin{gathered}
            \sum_{i=1}^n R_i^\frac{2}{3} 
            = \sum_{i=1}^n \frac{\|x_0 - x^*\|^\frac{2}{3}}{2^\frac{2(i-1)}{3}} \\
            = \|x_0 - x^*\|^\frac{2}{3} \frac{1 - 2^\frac{-2(n-1)}{3}}{2^\frac{1}{3}} \\
            \le \|x_0 - x^*\|^\frac{2}{3} 2^{-\frac{1}{3}} \\
            \le 2^{-\frac{1}{3}} D^\frac{2}{3}. 
        \end{gathered}
        \end{equation}
        Finally, we can get the total number of iterations of Algorithm \ref{alg:main} inside Algorithm \ref{alg:main restarted}:
        \[
        \begin{gathered}
            \sum_{i=1}^n T_i 
            \overset{\eqref{eq:restarts:T_i}}{=} \sum_{i=1}^n 
            \left\lceil 
                \max \left\{  
                    \frac{2^\frac{14}{3} L^\frac{2}{3} R_{i-1}^\frac{2}{3}}{\mu^\frac{2}{3}},
                    \frac{2^7 \delta}{\mu}
                \right\} 
            \right\rceil \\
            \le \sum_{i=1}^n \frac{2^\frac{14}{3}  L^\frac{2}{3} R_{i-1}^\frac{2}{3}}{\mu^\frac{2}{3}} + \frac{2^7  \delta}{\mu} n + n \\
            = \frac{2^\frac{14}{3} L^\frac{2}{3}}{\mu^\frac{2}{3}} \sum_{i=1}^n R_{i-1}^\frac{2}{3} + \frac{2^7  \delta}{\mu} n + n \\
            \overset{\eqref{eq:restarts:sum of R estimation}, \eqref{eq:restarts:n}}{\le} \frac{2^\frac{13}{3} L^\frac{2}{3} D^\frac{2}{3}}{\mu^\frac{2}{3}} + \left( \frac{2^7  \delta}{\mu} + 1 \right) \nrestarts 
        \end{gathered}
        \]
        Thus,
        \[
            \sum_{i=1}^n T_i = O \left( \left( \frac{LD}{\mu} \right)^\frac{2}{3} + \left( \frac{\delta}{\mu} + 1 \right) \nrestarts \right).
        \]   
        This completes the proof.
    \end{proof}

\section{Tensor generalization with more details}\label{app:tensor}
\subsection{Preliminaries}

\begin{algorithm}[H]
    \begin{algorithmic}\caption{\textsf{\algotensor}}
    \label{alg:tensor:main}
        \STATE \textbf{Input:} initial point $x_0 \in \XCal$, parameters $L_1$, $\eta$, sequence $\{\delta_i\}_{i=1}^{p-1}$, and $\textsf{opt} \in \{0, 1, 2\}$. 
        \STATE \textbf{Initialization:} set $s_0 = 0 \in \br^d$.
        \FOR{$k = 0, 1, 2, \ldots, T$} 
        \STATE Compute $v_{k+1} = \argmax_{v \in \XCal} \{\langle s_k, v - x_0\rangle - \frac{1}{2}\|v - x_0\|^2\}$. 
        \STATE Compute $x_{k+1} \in \XCal$ such that condition~\eqref{eq:tensor:subproblem} holds true. 
        \STATE  Compute $\lambda_{k+1}$ such that 
        \[
            \tfrac{1}{4(5p-2)} \le \lambda_k \left( \tfrac{\Lp}{p!} \| x_k - v_k \|^{p-1} + \sum_{i=1}^{p-1} \tfrac{\delta_i}{i!} \| x_k - v_k \|^{i-1} \right) \le \tfrac{1}{2(5p + 1)}.
        \]
        \STATE Compute $s_{k+1} = s_k - \lambda_{k+1} F(x_{k+1})$. 
        \ENDFOR
        \STATE \textbf{Output:} $\hat{x} = \left\{
            \begin{array}{cl}
                \tilde{x}_T = \frac{1}{\sum_{k=1}^T \lambda_k}\sum_{k=1}^T \lambda_k x_k, & \textnormal{if } \textsf{opt} = 0, \\
                x_T, & \textnormal{else if } \textsf{opt} = 1, \\ 
                x_{k_T} \textnormal{ for } k_T = \argmin_{1 \leq k \leq T} \|x_k - v_k\|, & \textnormal{else if } \textsf{opt} = 2.
            \end{array}
            \right. $
    \end{algorithmic}
\end{algorithm}

In this section, we provide more details on the generalization of Algorithm \ref{alg:main} with high-order derivatives.
We provide the pseudocode of the resulting method in Algorithm \ref{alg:tensor:main}.
To show the convergence of Algorithm~\ref{alg:tensor:main}, we use the Lyapunov function \eqref{eq:lyapunov}.

Define the $(p-1)$-th order approximations of the $F$
\begin{gather}
    \Phi_{p, v}(x) = F(v) + \sum_{i=1}^{p-1} \frac{1}{i!} \nabla^i F(v)[x - v]^i \label{eq:tensor:taylor} \\
    \Psi_{p, v}(x) = F(v) + \sum_{i=1}^{p-1} \frac{1}{i!} G_i(v)[x - v]^i \label{eq:tensor:taylor_inexact}.
\end{gather}

On each step, our method solves the following subproblem:
\begin{equation}\label{eq:tensor:subproblem}
    \sup_{x \in \XCal} \left\langle \Omega_{p, v_k}(x_k), x_k - x \right\rangle \le \tfrac{\Lp}{p!} \| x_k - v_k \|^{p+1} + \sum_{i=1}^{p-1} \tfrac{\delta_i}{i!} \| x_k - v_k \|^{i+1}.
\end{equation}

Additionally, we will need an auxiliary result from \cite{jiang2022generalized}, based on Assumption~\ref{as:smooth_p}.
The authors show, that this Assumption allows to control the quality of approximation of operator $F$ by its high-order Taylor polynomial:
\begin{equation}\label{eq:f_bound_p}
    \| F(v) - \Phi_{p, v}(x) \| \le \tfrac{\Lp}{p!} \| x-v \|^p.
\end{equation}

\subsection{Auxiliary lemmas}

First of all, we provide high-order generalizations of auxiliary lemmas for second-order case from Section \ref{app:monotone aux lemmas}.

\begin{lemma}\label{lm:tensor:f_bound_inexact_p}
    Let Assumptions~\ref{as:tensor:inexact} and \ref{as:smooth_p} with $i=p-1$ hold.
    Then, for any $x, v \in \XCal$
    \begin{equation}\label{eq:tensor:f_bound_inexact_p}
        \| F(v) - \Psi_{p,v}(x) \| \le \tfrac{\Lp}{p!} \| x - v \|^p + \sum_{i=1}^{p-1} \tfrac{1}{i!} \delta_i \| x - v \|^i.
    \end{equation}
\end{lemma}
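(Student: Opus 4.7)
The statement parallels Lemma~\ref{lm:f_bnd_inxt_2}, where the error of the inexact first-order Taylor model splits into the classical Taylor remainder plus a term produced by the Jacobian inexactness. I would prove the high-order case in exactly the same spirit, following three short steps.

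\textbf{Step 1 (triangle inequality).} Introduce the exact $(p-1)$-th-order Taylor polynomial $\Phi_{p,v}(x)$ as defined in \eqref{eq:tensor:taylor} as an intermediate term, so that
\[
    \| F(x) - \Psi_{p,v}(x) \| \;\le\; \| F(x) - \Phi_{p,v}(x) \| \;+\; \| \Phi_{p,v}(x) - \Psi_{p,v}(x) \|.
\]
(Note that the statement has a typographical $F(v)$ on the left-hand side; the intended quantity is the approximation error $\|F(x)-\Psi_{p,v}(x)\|$, matching both \eqref{eq:f_bound_p} and Lemma~\ref{lm:f_bnd_inxt_2}.)

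\textbf{Step 2 (exact Taylor remainder).} The first summand is bounded directly by the result \eqref{eq:f_bound_p} quoted from \cite{jiang2022generalized}, yielding $\tfrac{\Lp}{p!}\|x-v\|^p$, which matches the leading term on the right-hand side.

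\textbf{Step 3 (inexactness contribution).} Subtracting the definitions \eqref{eq:tensor:taylor} and \eqref{eq:tensor:taylor_inexact} gives
\[
    \Phi_{p,v}(x) - \Psi_{p,v}(x) \;=\; \sum_{i=1}^{p-1} \tfrac{1}{i!}\bigl(\nabla^i F(v) - G_i(v)\bigr)[x-v]^i.
\]
By the triangle inequality and the definition of the operator norm $\|\cdot\|_{\op}$ from the Notation section, each term satisfies
\[
    \bigl\|\bigl(\nabla^i F(v) - G_i(v)\bigr)[x-v]^i\bigr\| \;\le\; \bigl\|\bigl(\nabla^i F(v) - G_i(v)\bigr)[x-v]^{i-1}\bigr\|\cdot \|x-v\| \;\le\; \delta_i \|x-v\|^{i},
\]
where the last inequality applies Assumption~\ref{as:tensor:inexact} directly. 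Summing over $i=1,\ldots,p-1$ and combining with Step~2 gives the claimed bound.

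\textbf{Main obstacle.} The only non-routine point is the interplay between the operator-style inexactness bound of Assumption~\ref{as:tensor:inexact}, which is stated after contraction with $i-1$ copies of $(x-v)$, and the Taylor polynomial, which contracts with $i$ copies. Reconciling the two by one additional use of the operator-norm inequality (as in Step~3) is the substantive content; everything else is a verbatim high-order analogue of the proof of Lemma~\ref{lm:f_bnd_inxt_2}.
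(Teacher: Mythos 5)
Your proposal is correct and follows the paper's proof essentially verbatim: the same triangle-inequality split through the exact Taylor polynomial $\Phi_{p,v}(x)$, the same use of \eqref{eq:f_bound_p} for the remainder, and the same one extra operator-norm contraction to pass from the $(i-1)$-fold contracted bound of Assumption~\ref{as:tensor:inexact} to the $\delta_i\|x-v\|^i$ terms. Your remark about the typographical $F(v)$ versus $F(x)$ is also accurate; the paper's statement and proof carry the same typo.
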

\begin{proof}
    \begin{gather*}
        \| F(v) - \Psi_{p, v}(x) \| 
        \le \| F(v) - \Phi_{p, v}(x) \| + \| \Phi_{p, v}(x) - \Psi_{p, v}(x) \| \\
        \overset{\eqref{eq:f_bound_p}}{\le} \tfrac{\Lp}{p!} \| x-v \|^p + \sum_{i-1}^{p-1} \tfrac{1}{i!} \| (\nabla^i F(v) - G_i(v))[x-v]^{i-1} \| \| x-v \| \\
        \overset{\eqref{eq:tensor:inexact}}{\le} \tfrac{\Lp}{p!} \| x-v \|^p + \sum_{i=1}^{p-1} \tfrac{\delta_i}{i!} \| x-v \|^i.
    \end{gather*} 
\end{proof}

\begin{lemma}
    Let Assumptions~\ref{as:monotone}, \ref{as:tensor:inexact} and \ref{as:smooth_p} with $i=p-1$ hold.
    Then for any $x,v_{k+1} \in \XCal$ VI~\eqref{eq:subproblem} is relatively strongly monotone if $\eta_i \ge p$
    \begin{gather*}
        \tfrac{1}{2} \left( \nabla \Omega_{p, v}(x) + \nabla \Omega_{p, v}(x)^T \right) \\
        \tfrac{4 \Lp}{(p-1)!} \left( \| x-v \|^{p-1} I_{n \times n}  + \| x-v \|^{p-3}(x-v)(x-v)^T\right) \\
        + \sum_{i=1}^{p-1} \tfrac{\delta_i}{i!} \| x-v \|^{i-3} \left( (\eta_i - i) \| x-v \|^2 I_{n \times n} + \eta_i (i-1) (x - v) (x - v)^T \right).
    \end{gather*}
\end{lemma}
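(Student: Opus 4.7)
The plan is to generalize the proof of Lemma~\ref{lm:subproblem_monotone}: differentiate $\Omega_{p,v}$ term by term, control the Jacobian of $\Psi_{p,v}$ via an operator-norm Taylor remainder, and then absorb the negative contribution into the regularization Jacobians. First, I would compute $\nabla \Omega_{p,v}(x)$ directly from \eqref{eq:tensor:model}. For each term of the form $\|x-v\|^{i-1}(x-v)$, the product rule yields $\|x-v\|^{i-1} I + (i-1)\|x-v\|^{i-3}(x-v)(x-v)^T$, which is already symmetric, so no symmetrization is needed and the structural $I$-plus-rank-one shape on the right-hand side of the claim appears immediately. The Jacobian of the inexact Taylor polynomial is $\nabla \Psi_{p,v}(x) = \sum_{i=1}^{p-1} \tfrac{1}{(i-1)!}G_i(v)[x-v]^{i-1}$.

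Next, the key step is to compare $\nabla \Psi_{p,v}(x)$ with $\nabla F(x)$ in operator norm. Observe that $\nabla \Phi_{p,v}(x) = \sum_{j=0}^{p-2} \tfrac{1}{j!}\nabla^{j+1}F(v)[x-v]^j$ is precisely the $(p-2)$-th order Taylor polynomial of $\nabla F$ around $v$. Since Assumption~\ref{as:smooth_p} with $i=p-1$ says that $\nabla^{p-1} F$ is $L_{p-1}$-Lipschitz, the standard Taylor remainder argument (integrated along the segment from $v$ to $x$) gives $\|\nabla F(x) - \nabla \Phi_{p,v}(x)\|_\op \le \tfrac{L_{p-1}}{(p-1)!}\|x-v\|^{p-1}$. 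Combining with the termwise operator-norm bound $\|\nabla \Phi_{p,v}(x) - \nabla \Psi_{p,v}(x)\|_\op \le \sum_{i=1}^{p-1} \tfrac{\delta_i}{(i-1)!}\|x-v\|^{i-1}$, which follows from Assumption~\ref{as:tensor:inexact} applied to each matrix $(\nabla^i F(v) - G_i(v))[x-v]^{i-1}$, and using monotonicity of $F$ (so that $\tfrac{1}{2}(\nabla F(x) + \nabla F(x)^T) \succeq 0$), I obtain the master bound
\[
    \tfrac{1}{2}\bigl(\nabla \Psi_{p,v}(x) + \nabla \Psi_{p,v}(x)^T\bigr) \succeq -\tfrac{L_{p-1}}{(p-1)!}\|x-v\|^{p-1} I - \sum_{i=1}^{p-1} \tfrac{\delta_i}{(i-1)!}\|x-v\|^{i-1} I.
\]

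Finally, I would add the symmetric regularization Jacobians from step one to this lower bound and check each coefficient. For the $\Lp$ piece, the coefficient in front of $\|x-v\|^{p-1} I$ becomes $\tfrac{5\Lp}{(p-1)!} - \tfrac{\Lp}{(p-1)!} = \tfrac{4\Lp}{(p-1)!}$, and the rank-one coefficient is $\tfrac{5\Lp(p-1)}{(p-1)!} = \tfrac{5\Lp}{(p-2)!} \ge \tfrac{4\Lp}{(p-1)!}$ for $p \ge 2$, matching the claim. For each $i$, the coefficient of $\|x-v\|^{i-1} I$ becomes $\tfrac{\eta_i \delta_i}{i!} - \tfrac{\delta_i}{(i-1)!} = \tfrac{\delta_i(\eta_i - i)}{i!}$, which is nonnegative precisely because $\eta_i \ge p > i$; the rank-one coefficient $\tfrac{\eta_i \delta_i(i-1)}{i!}$ passes through unchanged. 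The main obstacle I anticipate is the second step: carefully establishing the Jacobian-level Taylor remainder bound for $\nabla F$ in operator norm, since the excerpt only states the vector-level smoothness estimate \eqref{eq:f_bound_p}; this requires a separate one-line integration of the Lipschitz estimate for $\nabla^{p-1} F$ along the segment $[v, x]$.
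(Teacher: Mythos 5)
Your proposal is correct and follows essentially the same route as the paper: differentiate the model term by term, lower-bound the symmetrized Jacobian of the inexact Taylor part by $\tfrac{1}{2}(\nabla F(x)+\nabla F(x)^T)$ minus the remainder and inexactness terms, drop the monotone part, and collect coefficients (including the observation that the rank-one $\Lp$-coefficient $\tfrac{5\Lp}{(p-2)!}$ dominates $\tfrac{4\Lp}{(p-1)!}$). The one point where you improve on the paper is that you explicitly derive the derivative-level remainder bound $\|\nabla F(x)-\nabla\Phi_{p,v}(x)\|_\op \le \tfrac{\Lp}{(p-1)!}\|x-v\|^{p-1}$ together with the termwise operator-norm inexactness bound, whereas the paper justifies this matrix inequality by citing only the vector-level estimate \eqref{eq:tensor:f_bound_inexact_p}.
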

\begin{proof}
    \begin{gather*}
        \tfrac{1}{2} \left( \nabla \Omega_{p, v}(x) + \nabla \Omega_{p, v}(x)^T \right) \\
        \stackrel{\eqref{eq:tensor:model}}{=} \tfrac{1}{2} \left( \sum_{i=1}^{p-1} \tfrac{1}{(i-1)!} \left( G_i(v)[x-v]^{i-1} + \left( G_i(v)[x-v]^{i-1} \right)^T \right) \right)  \\
        +\sum_{i=1}^{p-1} \tfrac{\eta_i \delta_i}{i!} \left( \| x-v \|^{i-1} I_{n \times n} + (i-1) \| x-v \|^{i-3} (x - v)(x-v)^T \right) \\
        + \tfrac{5 \Lp}{(p-1)!} \left( \| x-v \|^{p-1} I_{n \times n} \right) \\
        \stackrel{\eqref{eq:tensor:f_bound_inexact_p}}{\succeq} \tfrac{1}{2} \left( \nabla F(x) + \nabla F(x)^T \right) \\
        - \tfrac{\Lp}{(p-1)!} \| x - v \|^{p-1} I_{n \times n} - \sum_{i=1}^{p-1} \tfrac{1}{(i-1)!} \delta_i \| x - v \|^{i-1} I_{n \times n} \\
        + \sum_{i-1}^{p-1} \tfrac{\eta_i \delta_i}{i!} \left( \| x-v \|^{i-1} I_{n \times n} + (i-1) \| x-v \|^{i-3} (x-v) (x-v)^T \right) \\
        + \tfrac{5 \Lp}{(p-1)!} \left( \| x-v \|^{p-1} I_{n \times n} + (p-1) \| x-v \|^{p-3} (x-v)(x-v)^T \right) \\
        = \tfrac{1}{2} \left( \nabla F(x) + \nabla F(x)^T \right) \\
        + \tfrac{4 \Lp}{(p-1)!} \| x-v \|^{p-1} I_{n \times n} + \tfrac{5 \Lp}{(p-2)!} \| x-v \|^{p-3} (x - v)(x - v)^T \\
        + \sum_{i=1}^{p-1} \tfrac{\delta_i}{i!} (\eta_i - i) \| x-v \|^{i-1} I_{n \times n} + \sum_{i=1}^{p-1} \tfrac{\eta_i \delta_i (i-1)}{i!} \| x-v \|^{i-3} (x - v) (x - v)^T.
    \end{gather*}
    From monotonicity of $F$ we know that $\tfrac{1}{2} (F(x) - F(x)^T) \succeq 0$.
    Thus,
    \begin{gather*}
        \tfrac{1}{2} \left( \nabla \Omega_{p, v}(x) + \nabla \Omega_{p, v}(x)^T \right) \\
        \succeq \tfrac{4 \Lp}{(p-1)!} \| x-v \|^{p-1} I_{n \times n} + \tfrac{5 \Lp}{(p-2)!} \| x-v \|^{p-3} (x - v)(x - v)^T \\
         + \sum_{i=1}^{p-1} \tfrac{\delta_i}{i!} (\eta_i - i) \| x-v \|^{i-1} I_{n \times n} + \sum_{i=1}^{p-1} \tfrac{\eta_i \delta_i (i-1)}{i!} \| x-v \|^{i-3} (x - v) (x - v)^T.
    \end{gather*}
    By rearranging the terms we get
    \begin{gather*}
        \tfrac{1}{2} \left( \nabla \Omega_{p, v}(x) + \nabla \Omega_{p, v}(x)^T \right) \\
        \tfrac{4 \Lp}{(p-1)!} \left( \| x-v \|^{p-1} I_{n \times n}  + \| x-v \|^{p-3}(x-v)(x-v)^T\right) \\
        + \sum_{i=1}^{p-1} \tfrac{\delta_i}{i!} \| x-v \|^{i-3} \left( (\eta_i - i) \| x-v \|^2 I_{n \times n} + \eta_i (i-1) (x - v) (x - v)^T \right).
    \end{gather*}
    Thus, if $\eta_i \ge p$, we get that $\tfrac{1}{2} \left( \nabla \Omega_{p, v_{k+1}}(x) + \nabla \Omega_{p, v_{k+1}}(x)^T \right)$ is relatively strongly monotone.
\end{proof}

\subsection{Convergence in monotone case}

In this subsection we provide theoretical results directly connected to convergence rate of Algorithm \ref{alg:tensor:main}.
Firstly, we need to introduce additional technical lemmas, that generalize corresponding lemmas in Section \ref{app:monotone convergence}.

\begin{lemma}
    Let Assumption~\ref{as:monotone} hold and $\eta_i = 5p$.
    Then, for every $T \ge 1$, we have
    \begin{equation}\label{eq:tensor:lemma B2}
        \sum_{k=1}^{T} \left\langle \lambda_k F(x_k), x_0 - x \right\rangle \le \mathcal{E}_0 - \mathcal{E}_T + \left\langle s_t, x - x_0 \right\rangle - \tfrac{1}{8} \sum_{k=1}^{T} \| x_k - v_k \|^2.
    \end{equation}
\end{lemma}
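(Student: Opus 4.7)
The plan is to mirror the structure of the proof of Lemma~\ref{lm:DE-descent} almost verbatim, with the high-order model $\Omega_{p,v}$ and the high-order inexact smoothness bound (Lemma~\ref{lm:tensor:f_bound_inexact_p}) in place of their second-order counterparts. First, using the definition of $\mathcal{E}_k$ together with the update rule for $v_{k+1}$, I would express $\mathcal{E}_k = \langle s_k, v_{k+1}-x_0\rangle - \tfrac{1}{2}\|v_{k+1}-x_0\|^2$, compute $\mathcal{E}_{k+1}-\mathcal{E}_k$, and combine the first-order optimality condition for $v_{k+2}$ (applied at $v_{k+1}$) with the identity $\langle a,b\rangle = \tfrac{1}{2}(\|a+b\|^2-\|a\|^2-\|b\|^2)$ and the update $s_{k+1}=s_k-\lambda_{k+1}F(x_{k+1})$. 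This yields the one-step inequality
\[
\mathcal{E}_{k+1}-\mathcal{E}_k \le \lambda_{k+1}\langle F(x_{k+1}), x_0 - v_{k+2}\rangle - \tfrac{1}{2}\|v_{k+1}-v_{k+2}\|^2,
\]
exactly as in~\eqref{eq:DE-descent-second}--\eqref{eq:DE-descent-third}.

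Next, for an arbitrary $x\in\XCal$, I would decompose $\langle F(x_{k+1}), x_0-v_{k+2}\rangle = \langle F(x_{k+1}), x_0-x\rangle + \langle F(x_{k+1}), x-x_{k+1}\rangle + \langle F(x_{k+1}),x_{k+1}-v_{k+2}\rangle$, sum over $k=0,\dots,T-1$, reindex, and use $s_0=0$ together with $s_{k}-s_{k-1}=-\lambda_k F(x_k)$ to telescope the first piece into $\langle s_T, x-x_0\rangle$. After this step it remains to prove
\[
\mathbf{II} := \sum_{k=1}^{T}\Bigl(\lambda_k\langle F(x_k), x_k - v_{k+1}\rangle - \tfrac{1}{2}\|v_k - v_{k+1}\|^2\Bigr) \le -\tfrac{1}{8}\sum_{k=1}^T\|x_k-v_k\|^2.
\]

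To bound $\mathbf{II}$ I would write $F(x_k) = [F(x_k)-\Omega_{p,v_k}(x_k)] + \Omega_{p,v_k}(x_k)$, invoke the subproblem condition~\eqref{eq:tensor:subproblem} with $x=v_{k+1}$ to bound $\langle \Omega_{p,v_k}(x_k), x_k-v_{k+1}\rangle$, and invoke Lemma~\ref{lm:tensor:f_bound_inexact_p} together with the definition of $\Omega_{p,v_k}$~\eqref{eq:tensor:model} to bound
\[
\|F(x_k)-\Omega_{p,v_k}(x_k)\| \le \tfrac{\Lp}{p!}\|x_k-v_k\|^p + \sum_{i=1}^{p-1}\tfrac{\delta_i}{i!}\|x_k-v_k\|^i + \sum_{i=1}^{p-1}\tfrac{\eta_i\delta_i}{i!}\|x_k-v_k\|^i + \tfrac{5\Lp}{(p-1)!}\|x_k-v_k\|^p.
\]
Applying Cauchy--Schwarz with $\|x_k-v_{k+1}\|\le \|x_k-v_k\|+\|v_k-v_{k+1}\|$ and expanding $\langle x_k-v_k, x_k-v_{k+1}\rangle \ge \|x_k-v_k\|^2 - \|x_k-v_k\|\|v_k-v_{k+1}\|$ (exactly as in the second-order proof) separates each summand into a ``good'' part that multiplies $\lambda_k\bigl(\tfrac{\Lp}{p!}\|x_k-v_k\|^{p+1}+\sum_i\tfrac{\delta_i}{i!}\|x_k-v_k\|^{i+1}\bigr)$ and a cross term proportional to $\lambda_k\bigl(\tfrac{\Lp}{p!}\|x_k-v_k\|^p+\sum_i\tfrac{\delta_i}{i!}\|x_k-v_k\|^i\bigr)\|v_k-v_{k+1}\|$.

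The main technical step will be the bookkeeping in the final inequality: using $\eta_i=5p$ together with the adaptive choice
\[
\tfrac{1}{4(5p-2)} \le \lambda_k\Bigl(\tfrac{\Lp}{p!}\|x_k-v_k\|^{p-1}+\sum_{i=1}^{p-1}\tfrac{\delta_i}{i!}\|x_k-v_k\|^{i-1}\Bigr)\le \tfrac{1}{2(5p+1)},
\]
one normalizes the good and cross terms so each $\lambda_k$-factor in front of $\|x_k-v_k\|^{j+1}$ becomes at most $\tfrac{1}{2(5p+1)}/\|x_k-v_k\|^{j-1}$ times the corresponding denominator. The coefficients $5p-2$ and $5p+1$ are chosen precisely so that after collecting terms the contribution of $\|x_k-v_k\|^2$ is at most $-\tfrac{1}{4}\|x_k-v_k\|^2$ and the mixed terms can be absorbed via Young's inequality $\tfrac{1}{2}\|x_k-v_k\|\|v_k-v_{k+1}\| \le \tfrac{1}{8}\|x_k-v_k\|^2+\tfrac{1}{2}\|v_k-v_{k+1}\|^2$; cancelling the $\tfrac{1}{2}\|v_k-v_{k+1}\|^2$ against the Lyapunov term then yields the desired $-\tfrac{1}{8}\|x_k-v_k\|^2$. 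This constant-tracking is the hard part, since unlike the $p=2$ case one must verify the inequality simultaneously for every power $i=1,\dots,p-1$; I expect to handle it by bounding each $\delta_i$ summand by the same universal constant $\tfrac{1}{2(5p+1)}$ using the upper bound in the stepsize rule, which was already tuned so that the sum of $p-1$ such contributions plus the $\Lp$-contribution still leaves a margin of at least $\tfrac{1}{8}$ for the $\|x_k-v_k\|^2$ term.
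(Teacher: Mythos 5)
The skeleton of your argument (Lyapunov recursion, optimality of $v_{k+2}$, telescoping term $\textbf{I}$ into $\langle s_T, x-x_0\rangle$, and the final Young-inequality bookkeeping giving $-\tfrac18\sum_k\|x_k-v_k\|^2$) is exactly the paper's, but the central step bounding $\textbf{II}$ has a genuine gap. You decompose $F(x_k)=[F(x_k)-\Omega_{p,v_k}(x_k)]+\Omega_{p,v_k}(x_k)$ and then bound $\|F(x_k)-\Omega_{p,v_k}(x_k)\|$ by the all-positive sum $\tfrac{\Lp}{p!}\|x_k-v_k\|^p+\sum_i\tfrac{\delta_i}{i!}\|x_k-v_k\|^i+\sum_i\tfrac{\eta_i\delta_i}{i!}\|x_k-v_k\|^i+\tfrac{5\Lp}{(p-1)!}\|x_k-v_k\|^p$, followed by Cauchy--Schwarz. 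Once you have taken norms in this way, every contribution to $\langle F(x_k),x_k-v_{k+1}\rangle$ is nonnegative (the subproblem slack from \eqref{eq:tensor:subproblem} is also nonnegative), so $\textbf{II}$ is bounded by $\sum_k\bigl(\lambda_k\cdot(\text{positive terms})-\tfrac12\|v_k-v_{k+1}\|^2\bigr)$; taking $v_{k+1}=v_k$ shows such a bound can never be $\le-\tfrac18\sum_k\|x_k-v_k\|^2$. The negative coefficients $-(5p-2)$ and $-(\eta_i-2)$ on $\|x_k-v_k\|^{p+1}$ and $\|x_k-v_k\|^{i+1}$, which are what make the stepsize window $\tfrac{1}{4(5p-2)}\le\lambda_k(\cdot)\le\tfrac{1}{2(5p+1)}$ work, come precisely from the regularization terms of $\Omega_{p,v_k}$ kept as \emph{signed inner products}, i.e.\ from writing $\langle F(x_k),x_k-v_{k+1}\rangle=\langle F(x_k)-\Psi_{p,v_k}(x_k),x_k-v_{k+1}\rangle+\langle\Omega_{p,v_k}(x_k),x_k-v_{k+1}\rangle-\sum_i\tfrac{\eta_i\delta_i}{i!}\|x_k-v_k\|^{i-1}\langle x_k-v_k,x_k-v_{k+1}\rangle-\tfrac{5\Lp}{(p-1)!}\|x_k-v_k\|^{p-1}\langle x_k-v_k,x_k-v_{k+1}\rangle$ and only then using $\langle x_k-v_k,x_k-v_{k+1}\rangle\ge\|x_k-v_k\|^2-\|x_k-v_k\|\,\|v_k-v_{k+1}\|$. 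Your own mention of expanding this inner product is inconsistent with your decomposition: after absorbing the regularizers into the norm bound, no such inner product remains.

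The all-positive bound you wrote is the one the paper uses in the \emph{nonmonotone} residual analysis (where one only needs an upper bound on the model error), not in this descent lemma. To repair your proof, apply Lemma~\ref{lm:tensor:f_bound_inexact_p} only to $F(x_k)-\Psi_{p,v_k}(x_k)$, keep the $\eta_i\delta_i$ and $5\Lp$ terms of the model explicit with their minus signs, and then your constant-tracking with $\eta_i=5p$ and the stated $\lambda_k$-window goes through exactly as in the paper, yielding $\textbf{II}\le\sum_k\bigl(\tfrac12\|x_k-v_k\|\|v_k-v_{k+1}\|-\tfrac14\|x_k-v_k\|^2-\tfrac12\|v_k-v_{k+1}\|^2\bigr)\le-\tfrac18\sum_k\|x_k-v_k\|^2$.
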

\begin{proof}
    Since the first steps in the proof of this Lemma are the same as in  Lemma \ref{lm:DE-descent}, we start our reasoning from \eqref{eq:DE-descent-third}:
    \begin{equation}\label{eq:DE-descent-third-again}
        \sum_{k=1}^T \lambda_k \langle F(x_k), x_k - x\rangle \leq \ECal_0 - \ECal_T + \underbrace{\sum_{k=1}^T \lambda_k \langle F(x_k), x_0 - x\rangle}_{\textbf{I}} + \underbrace{\sum_{k=1}^T \lambda_k \langle F(x_k), x_k - v_{k+1}\rangle - \tfrac{1}{2}\|v_k - v_{k+1}\|^2}_{\textbf{II}}. 
    \end{equation}
    Using the update formula for $s_{k+1}$ and letting $s_0 = 0_d \in \R^d$, we have
    \begin{equation}
        \textbf{I} = \sum_{k=1}^{T} \left\langle \lambda_k F(x_k), x_0 - x \right\rangle = \sum_{k=1}^{T} \left\langle s_{k-1} - s_k, x_0 - x \right\rangle = \left\langle s_T, x - x_0 \right\rangle.
    \end{equation}

    Now consider \textbf{II}.
    Using \eqref{eq:tensor:model} we get
    \begin{gather*}
        \left\langle F(x_k), x_k - v_{k+1} \right\rangle \\
        \stackrel{\eqref{eq:tensor:model}}{=} \left\langle F(x_k) - \Psi_{p, v_k}(x_k), x_k - v_{k + 1} \right\rangle + \left\langle \Omega_{p, v_k}(x_k), x_k  - v_{k + 1} \right\rangle \\
        - \sum_{i=1}^{p-1} \tfrac{\eta_i \delta_i}{i!} \| x_k - v_k \|^{i-1} \left\langle x_k - v_k, x_k - v_{k + 1} \right\rangle \\
        - \tfrac{5 \Lp}{(p-1)!} \| x_k - v_k \|^{p-1} \left\langle x_k - v_k, x_k - v_{k + 1} \right\rangle \\
        \stackrel{\eqref{eq:tensor:f_bound_inexact_p}}{\le} \tfrac{\Lp}{p!} \| x_k - v_k \|^p \| x_k - v_{k+1} \| + \sum_{i=1}^{p-1} \tfrac{1}{i!} \delta_i \| x_k - v_k \|^i \| x_k - v_{k+1} \| + \left\langle \Omega_{p, v_k}(x_k), x_k - v_{k + 1} \right\rangle \\
        - \sum_{i=1}^{p-1} \tfrac{\eta_i \delta_i}{i!} \| x_k - v_k \|^{i-1} \left\langle x_k - v_k, x_k - v_{k + 1} \right\rangle \\ 
        - \tfrac{5 \Lp}{(p-1)!} \| x_k - v_k \|^{p-1} \left\langle x_k - v_k, x_k - v_{k + 1} \right\rangle.
    \end{gather*}
    Next, using 
    $\langle x_k - v_k, x_k - v_{k+1} \rangle \geq \|x_k - v_k\|^2 - \|x_k - v_k\|\|v_k - v_{k+1}\|$ and $\|x_k - v_{k+1}\| \leq \|x_k - v_k\| + \|v_k - v_{k+1}\|$, we get
    \begin{gather*}
        \left\langle F(x_k), x_k - v_{k+1} \right\rangle \\
        \le \tfrac{\Lp}{p!} \left( \| x_k - v_k \|^{p+1} + \| x_k - v_k \|^p \| v_k - v_{k+1} \| \right) \\
        + \sum_{i=1}^{p-1} \tfrac{1}{i!} \delta_i \left( \| x_k - v_k \|^{i+1} \| x_k - v_k \|^i \| v_k - v_{k+1} \| \right) \\
        - \sum_{i=1}^{p-1} \tfrac{\eta_i \delta_i}{i!} \left( \| x_k - v_k \|^{i+1} - \| x_k - v_k \|^i \| v_k - v_{k+1} \| \right) \\
        - \tfrac{5 \Lp}{(p-1)!} \left( \| x_k - v_k \|^{p+1} - \| x_k - v_k \|^p \| v_k - v_{k+1} \| \right) \\ 
        + \left\langle \Omega_{p,v_k}(x_k), x_k - v_{k + 1} \right\rangle.
    \end{gather*}
    From definition of the subproblem \eqref{eq:tensor:subproblem} we have that 
    \begin{equation*}
        \left\langle \Omega_{p,v_k}(x_k), x_k - v_{k + 1} \right\rangle \le 
        \sup_{x \in \XCal} \left\langle \Omega_{p, v_k}(x_k), x_k - x \right\rangle \le \tfrac{\Lp}{p!} \| x_k - v_k \|^{p+1} + \sum_{i=1}^{p-1} \tfrac{\delta_i}{i!} \| x_k - v_k \|^{i+1}.
    \end{equation*}
    Thus, 
    \begin{gather*}
        \left\langle F(x_k), x_k - v_{k+1} \right\rangle \\
        \le \tfrac{\Lp ( 1 + 5p)}{p!} \| x_k - v_k \|^p \| v_k - v_{k + 1} \|  - \tfrac{\Lp (5p - 2)}{p!} \| x_k - v_k \|^{p+1} \\
        + \sum_{i=1}^{p-1} \tfrac{\delta_i (1 + \eta_i)}{i!} \| x_k - v_k \|^i \| v_k - v_{k + 1} \| - \sum_{i=1}^{p-1} \tfrac{\delta_i (\eta_i - 2)}{i!} \| x_k - v_k \|^{i + 1} \\
    \end{gather*}
    From this we get
    \begin{gather*}
        \textbf{II} = \sum_{k=1}^{T} \lambda_k \left\langle F(x_k), x_k - v_{k + 1} \right\rangle - \tfrac{1}{2} \| v_k - v_{k + 1} \|^2 \\
        \le \sum_{k-1}^{T} \Big[ \lambda_k \tfrac{\Lp (1 + 5p)}{p!} \| x_k - v_k \|^p \| v_k - v_{k + 1} \| - \tfrac{\lambda_k \Lp (5p-2)}{p!} \| x_k - v_k \|^{p + 1} \\
        + \lambda_k \sum_{i=1}^{p-1} \tfrac{\delta_i(1 + \eta_i)}{i!} \| x_k - v_k \|^i \| v_k - v_{k + 1} \| 
        - \lambda_k \sum_{i=1}^{p - 1} \tfrac{\delta_i (\eta_i - 2)}{i!} \| x_k - v_k \|^{i + 1} 
        - \tfrac{1}{2} \| v_k - v_{k + 1} \|^2 \Big] \\
        \stackrel{\eta_i=5p}{=} \sum_{k=1}^{T} \Big[ \lambda_k \left( \tfrac{\Lp}{p!} \| x_k - v_k \|^{p-1} + \sum_{i=1}^{p-1} \tfrac{\delta_i}{i!} \| x_k - v_k \|^{i-1} \right) (5p + 1) \| x_k - v_k \| \| v_k - v_{k + 1} \| \\
        - \lambda_k \left( \tfrac{\Lp}{p!} \| x_k - v_k \|^{p-1} + \sum_{i=1}^{p-1} \tfrac{\delta_i}{i!} \| x_k - v_k \|^{i-1} \right) (5p - 2) \| x_k - v_k \|^2 - \tfrac{1}{2} \| v_k - v_{k + 1} \|^2 \Big].
    \end{gather*}
    Now, if we choose $\lambda_k$ in a such way that 
    \begin{equation}\label{eq:tensor:lambda choice}
        \tfrac{1}{4(5p-2)} \le \lambda_k \left( \tfrac{\Lp}{p!} \| x_k - v_k \|^{p-1} + \sum_{i=1}^{p-1} \tfrac{\delta_i}{i!} \| x_k - v_k \|^{i-1} \right) \le \tfrac{1}{2(5p + 1)},
    \end{equation}
    we get the following
    \begin{gather*}
        \textbf{II} \stackrel{\eqref{eq:tensor:lambda choice}}{\le} \sum_{k=1}^{T} \left[ \tfrac{1}{2} \| x_k - v_k \| \| v_k - v_{k + 1} \| - \tfrac{1}{4} \| x_k - v_k \|^2 - \tfrac{1}{2} \| v_k - v_{k + 1} \|^2 \right] \\
        \le \sum_{k=1}^{T} \left[ \max_\gamma \left\{ \tfrac{1}{2} \gamma \| x_k - v_k \| - \tfrac{1}{2} \gamma^2 \right\} - \tfrac{1}{4} \| x_k - v_k \|^2 \right] \\
        \le \sum_{k=1}^{T} \left[ \tfrac{1}{4} \| x_k - v_k \|^2 - \tfrac{1}{8} \| x_k - v_k \|^2 - \tfrac{1}{4} \| x_k - v_k \|^2 \right] \\
        = -\tfrac{1}{8} \sum_{k=1}^{T} \| x_k - v_k \|^2.
    \end{gather*}

    Finally, combining estimations of \textbf{I} and \textbf{II} with \eqref{eq:DE-descent-third-again}, we get
    \[
        \sum_{t=1}^{T} \lambda_k \left\langle F(x_k), x_k - x \right\rangle \le \ECal_0 - \ECal_T + \left\langle s_T, x - x_0 \right\rangle - \tfrac{1}{8} \sum_{k=1}^{T} \| x_k - v_k \|^2.
    \]
\end{proof}

\begin{lemma}
    Let Assumptions~\ref{as:smooth_p}, \ref{as:minty} hold. For every $T \ge 1$ we have
    \begin{equation}\label{eq:tensor:lemma B3}
        \frac{1}{\sum_{k=1}^{T} \lambda_k} \le \frac{2^p p^\frac{p - 1}{2} (20p - 8) \tfrac{\Lp}{p!} \| x^* - x_0 \|^{p - 1}}{T^\frac{p+1}{2}} 
        + 2^\frac{3p}{2} p^\frac{p-1}{2} (20p - 8) \sum_{i=1}^{p - 1} \frac{\delta_i \| x^* - x_0 \|^{i-1}}{i! T^\frac{i + 1}{2}}.
    \end{equation}
\end{lemma}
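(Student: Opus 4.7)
The proof follows the template of Lemma~\ref{lm:DE-control} (the second-order analogue), with the Hölder exponent adjusted from $2$ to $q := 2/(p-1)$ to accommodate the highest-degree term $\|x_k-v_k\|^{p-1}$ appearing in the adaptive rule for $\lambda_k$. First, I would derive the summable $\ell_2$ budget $\sum_{k=1}^T \|x_k-v_k\|^2 \le 4\|x^*-x_0\|^2$ by repeating the argument of Lemma~\ref{lm:DE-error} on top of the tensor Lyapunov inequality~\eqref{eq:tensor:lemma B2}: take $x=x^*$ (the Minty point from Assumption~\ref{as:minty}), use $\mathcal{E}_0=0$ together with the Young inequality $-\mathcal{E}_T + \langle s_T, x^*-x_0\rangle \le \tfrac{1}{2}\|x^*-x_0\|^2$, and discard the nonnegative term $\sum_k \lambda_k\langle F(x_k), x_k-x^*\rangle$ by Minty.

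Second, the adaptive rule for $\lambda_k$ gives $\lambda_k^{-1} \le 4(5p-2)\,S_k$, where $S_k := \tfrac{L_{p-1}}{p!}\|x_k-v_k\|^{p-1} + \sum_{i=1}^{p-1} \tfrac{\delta_i}{i!}\|x_k-v_k\|^{i-1}$. Raising this to the power $q=2/(p-1)$ and distributing via a universal power-sum inequality $(\sum_{j=1}^p a_j)^q \le p^{\max(q,1)} \sum_j a_j^q$ converts $S_k^q$ into a linear combination of the terms $\|x_k-v_k\|^2$ and $\|x_k-v_k\|^{2(i-1)/(p-1)}$ for $i=1,\dots,p-1$. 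Summing over $k$, the leading term is controlled by the $\ell_2$ budget above, and each fractional power is bounded by the elementary Hölder estimate $\sum_k \|x_k-v_k\|^{\alpha} \le (\sum_k \|x_k-v_k\|^2)^{\alpha/2}\, T^{1-\alpha/2}$ for $0\le \alpha \le 2$, producing a $T^{(p-i)/(p-1)}$ factor on the $i$-th term.

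Third, I would apply the standard Hölder coupling $T = \sum_k 1 \le (\sum_k \lambda_k^{-q})^{1/(1+q)} (\sum_k \lambda_k)^{q/(1+q)}$, which with $q = 2/(p-1)$ rearranges to $1/\sum_k \lambda_k \le (\sum_k \lambda_k^{-2/(p-1)})^{(p-1)/2}/T^{(p+1)/2}$. Substituting the bound from the previous step and once more distributing the outer power $(p-1)/2$ over the $p$ summands inside, the exponents collapse cleanly: the identity $q\cdot (p-1)/2 = 1$ pulls $L_{p-1}/p!$ and $\delta_i/i!$ linearly out of the power; the factor $(4\|x^*-x_0\|^2)^{(i-1)/(p-1)}$ becomes $2^{i-1}\|x^*-x_0\|^{i-1}$; and the time factor on the $i$-th term becomes $T^{(p-i)/2}/T^{(p+1)/2} = T^{-(i+1)/2}$, while the main term's time factor is $T^{-(p+1)/2}$. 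Using $\|x^*-x_0\| \le D$ and collecting the numerical constants (in particular $4(5p-2) = 20p-8$ and the two factors of $p^{\cdot}$) produces the stated inequality.

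The main obstacle is the double bookkeeping of exponents: the exponent $q=2/(p-1)$ must be chosen precisely so that the outer Hölder gives exactly $T^{(p+1)/2}$ on the main term, while the inner Hölder on $\sum_k \|x_k-v_k\|^\alpha$ produces the $T^{(p-i)/(p-1)}$ factors that, after being raised to $(p-1)/2$ and divided by $T^{(p+1)/2}$, yield precisely the advertised $T^{-(i+1)/2}$ for each $\delta_i$ term. Tracking the multiplicative constants $p^{(p-1)/2}$, $2^p$, $2^{3p/2}$, and $20p-8$ through the two power-sum steps is somewhat tedious but routine once the exponent structure is fixed.
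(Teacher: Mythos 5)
Your proposal is correct and follows essentially the same route as the paper's proof: bound $\lambda_k^{-2/(p-1)}$ via the adaptive rule, distribute the power $2/(p-1)$ over the $p$ summands, control the fractional powers of $\|x_k-v_k\|$ by the intermediate H\"older inequality and the $\ell_2$ budget inherited from the tensor analogue of Lemma~\ref{lm:DE-error}, apply the outer H\"older coupling with exponent $2/(p-1)$ to get $1/\sum_k\lambda_k \le (\sum_k\lambda_k^{-2/(p-1)})^{(p-1)/2}/T^{(p+1)/2}$, and then distribute the outer power $(p-1)/2$. The only deviations are cosmetic constant choices (your $p^{\max(q,1)}$ power-sum bound is slightly looser than the paper's $p\cdot\sum a_j^q$ for $p=2$), which do not affect the $O(\cdot)$ statement.
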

\begin{proof}
    From H\"older inequality
    \[
        \sum_{k=1}^{T} \lambda_k \ge \frac{T^\frac{p + 1}{2}}{\left( \sum_{k=1}^{T} \lambda_k^{-\frac{2}{p-1}} \right)^\frac{p-1}{2}}
    \]
    Consider denominator:
    \begin{gather*}
        \sum_{k=1}^{T} \lambda_k^{-\frac{2}{p-1}} 
        \stackrel{\eqref{eq:tensor:lambda choice}}{\le} \sum_{k=1}^{T} \lambda_k^{-\frac{2}{p-1}} \left( 4(5p - 2) \right)^\frac{2}{p-1} \lambda_k^\frac{2}{p-1} \left( \frac{\Lp}{p!} \| x_k - v_k \|^{p-1} + \sum_{i=1}^{p-1} \frac{\delta_i}{i!} \| x_k - v_K \|^{i-1} \right)^\frac{2}{p-1} \\
        = \sum_{k=1}^{T} (20p - 8)^\frac{2}{p-1} \left( \frac{\Lp}{p!} \| x_k - v_k \|^{p-1} + \sum_{i=1}^{p-1} \frac{\delta_i}{i!} \| x_k - v_k \|^{i-1} \right)^\frac{2}{p-1}.
    \end{gather*}

    For $p \ge 2$ we have
    \[
        \begin{cases}
            \tfrac{2}{p - 1} &= 2,\quad p = 2, \\
            \tfrac{2}{p - 1} &\le 1,\quad p \ge 3.
        \end{cases}
    \]
    Consider some nonnegative sequence $\{a_i | a_i \ge 0,\ \forall i \in \overline{1,n}\}$. For $p = 2$ from Jensen inequality we know that $\left( \sum_{i=1}^{n} a_i \right)^2 \le \sum_{i=1}^{n} n a_i^2$. 
    From Lemma 7 of \cite{li2019convergence} we know that $\forall q \in [0, 1], x, y > 0 \rightarrow (x + y)^q \le x^q + y^q$.
    Thus, for $p \ge 3$ we can come to the same conclusion: $\left( \sum_{i=1}^{n} a_i \right)^\frac{2}{p-1} \le \sum_{i=1}^{n} a_i^\frac{2}{p-1} \le \sum_{i=1}^{n} n a_i^\frac{2}{p-1}$.
    In other words,
    \begin{equation}\label{eq:sequence sum estimation}
        \left( \sum_{i=1}^{n} a_i \right)^\frac{2}{p-1} \le \sum_{i=1}^{n} a_i^\frac{2}{p-1} \le \sum_{i=1}^{n} n a_i^\frac{2}{p-1},\quad \forall p \ge 2, a_i \ge 0.
    \end{equation}
    Using this inequality, we get
    \begin{gather*}
        \sum_{k=1}^{T} \lambda_k^{-\frac{2}{p-1}}
        \stackrel{\eqref{eq:sequence sum estimation}}{\le} p(20p - 8)^\frac{2}{p-1} \sum_{k=1}^{T} \left( \frac{\Lp}{p!} \right)^\frac{2}{p-1} \| x_k - v_k \|^2 + p (20p - 8)^\frac{2}{p-1} \sum_{k=1}^{T} \sum_{i=1}^{p-1} \left( \frac{\delta_i}{i!} \right)^\frac{2}{p-1} \| x_k - v_k \|^\frac{2(i- 1)}{p - 1} \\
        \stackrel{\eqref{eq:DE-error}}{\le} 4p (20p - 8)^\frac{2}{p-1} \left( \frac{\Lp}{p!} \right)^\frac{2}{p-1} \| x^* - x_0 \|^2 + p (20p - 8)^\frac{2}{p-1} \sum_{k=1}^{T} \sum_{i=1}^{p-1} \left( \frac{\delta_i}{i!} \right)^\frac{2}{p-1} \| x_k - v_k \|^\frac{2(i- 1)}{p - 1}
    \end{gather*}
    Consider the second factor in this inequality.
    If we denote $a_k = \| x_k - v_k \|^\frac{2(i-1)}{p-1},\ b_k = 1,\ c = \tfrac{p-1}{i-1},\ d = \frac{p-1}{p-i}$, then we can use H\"older inequality:
    \[
        \sum_{k=1}^{n} |a_k b_k| \le \left( \sum_{k=1}^{n} |a_k|^c \right)^\frac{1}{c} \left( \sum_{i=1}^{n} |b_k|^d \right)^\frac{1}{d},\quad \frac{1}{c} + \frac{1}{d} = 1.
    \]
    From this we get
    \begin{gather*}
        \sum_{k=1}^{T} \| x_k - v_k \|^\frac{2(i-1)}{p-1} \le \left( \sum_{k=1}^{T} \| x_k - v_k \|^2 \right)^\frac{i-1}{p-1} \left( \sum_{k=1}^{T} 1 \right)^\frac{p-i}{p-1} \\
        \stackrel{\eqref{eq:DE-error}}{\le} 4 \| x^* - x_0 \|^\frac{2(i-1)}{p - 1} T^\frac{p-i}{p-1}.
    \end{gather*}
    Thus,
    \begin{gather*}
        \sum_{k=1}^{T} \lambda_k^{-\frac{2}{p-1}}
        \le 4p (20p - 8)^\frac{2}{p-1} \left( \frac{\Lp}{p!} \right)^\frac{2}{p-1} \| x^* - x_0 \|^2 + 4p (20p - 8)^\frac{2}{p-1} \sum_{i=1}^{p-1} \left( \frac{\delta_i}{i!} \right)^\frac{2}{p-1} \| x^* - x_0 \|^\frac{2(i-1)}{p-1} T^\frac{p-i}{p-1} \\
    \end{gather*}
    
    Consider $(a + b)^\frac{p-1}{2},\ p \ge 2$.
    For $p=2$ from Lemma 7 in \cite{li2019convergence} we get $(a + b)^\frac{1}{2} \le a^\frac{1}{2} + b^\frac{1}{2} < 2 (a^\frac{1}{2} + b^\frac{1}{2})$. For $p \ge 3$ we have from Jensen inequality $(a + b)^\frac{p - 1}{2} \le 2^{\frac{p - 1}{2} - 1} \left( a^\frac{p-1}{2} + b^\frac{p-1}{2} \right) < 2^\frac{p}{2} \left( a^\frac{p-1}{2} + b^\frac{p-1}{2} \right)$.
    In other words,
    \begin{equation}\label{eq:power of (a+b) estimation}
        (a + b)^\frac{p-1}{2} \le 2^\frac{p}{2} \left( a^\frac{p-1}{2} + b^\frac{p-1}{2} \right),\quad \forall p \ge 2.
    \end{equation}

    Thus,
    \begin{gather*}
        \left( \sum_{k=1}^{T} \lambda_k^{-\frac{2}{p-1}} \right)^\frac{p-1}{2} \\
        \stackrel{\eqref{eq:power of (a+b) estimation}}{\le} 2^\frac{p}{2} \left[ (4p)^\frac{p-1}{2} (20p - 8) \frac{\Lp}{p!} \| x^* - x_0 \|^{p-1} + (4p)^\frac{p-1}{2} (20p - 8) \left( \sum_{i=1}^{p-1} \left( \frac{\delta_i}{i!} \right)^\frac{2}{p-1} \| x^* - x_0 \|^\frac{2(i-1)}{p-1} T^\frac{p-i}{p-1} \right)^\frac{p-1}{2} \right] \\
        \le 2^p p^\frac{p-1}{2}(20p - 8) \left[ \frac{\Lp}{p!} \| x^* - x_0 \|^{p-1} + 2^\frac{p}{2} \sum_{i=1}^{p-1} \frac{\delta_i}{i!} \| x^* - x_0 \|^{i-1} T^\frac{p-i}{2} \right] \\
        = 2^p p^\frac{p-1}{2} (20p - 8) \frac{\Lp}{p!} \| x^* - x_0 \|^{p-1} + 2^\frac{3p}{2} p^\frac{p-1}{2} (20p - 8) \sum_{i=1}^{p-1} \frac{\delta_i}{i!} \| x^* - x_0 \|^{i-1} T^\frac{p-i}{2}.
    \end{gather*}

    Now, we can get the final result
    \begin{gather*}
        \frac{1}{\sum_{k=1}^{T} \lambda_k} \le \frac{\left( \sum_{k=1}^{T} \lambda_k^{-\frac{2}{p - 1}} \right)^\frac{p-1}{2}}{T^\frac{p+1}{2}} \\
        \le \frac{2^p p^\frac{p-1}{2} (20p - 8) \frac{\Lp}{p!} \| x^* - x_0 \|^{p-1}}{T^\frac{p+1}{2}} + 2^\frac{3p}{2} p^\frac{p-1}{2} (20p - 8) \sum_{i=1}^{p-1} \frac{\delta_i \| x^* - x_0 \|^{i-1}}{i! T^\frac{i + 1}{2}}
    \end{gather*}
\end{proof}

Finally, we provide the convergence rate of Algorithm \ref{alg:tensor:main} in monotone case.
\begin{customthm}{\ref{thm:tensor:monotone}}
    Let Assumptions~\ref{as:monotone},~\ref{as:smooth_p} with $i=p-1$, and~\ref{as:tensor:inexact}  hold.
    Then, after $T \geq 1$ iterations of \algotensor\ with parameters $\eta_i=5p$, $\myopt=0$, we get the following bound
    \[
        \textsc{gap}(\tilde{x}_T) = \sup_{x \in \XCal} \ \langle F(x), \tilde{x}_T - x\rangle \leq 
        O \ls \tfrac{\Lp D^{p+1}}{T^\frac{p + 1}{2}} + \textstyle{\sum}_{i=1}^{p-1} \tfrac{\delta_i D^{i+1}}{T^\frac{i+1}{2}} \rs.
    \]
\end{customthm}
\begin{proof}
    Consider $\forall x \in \XCal$, $\opt = 0$.
    \begin{gather*}
        \left\langle F(x), \tilde x_T - x \right\rangle \\
        = \frac{1}{\sum_{k=1}^{T} \lambda_k} \sum_{k-1}^{T} \lambda_k \left\langle F(x_k), x_k - x \right\rangle 
        \stackrel{\eqref{eq:tensor:lemma B2}}{\le} \frac{1}{\sum_{k=1}^{T} \lambda_k} \frac{1}{2} \| x - x_0 \|^2 \le \frac{D^2}{2 \sum_{i=1}^{T} \lambda_k} \\
        \stackrel{\eqref{eq:tensor:lemma B3}}{\le} 2^{p-1} p^\frac{p-1}{2} (20p - 8) \frac{\Lp D^{p + 1}}{p! T^\frac{p + 1}{2}} + 2^{\frac{3p}{2} - 1} p^\frac{p-1}{2} (20p - 8) \sum_{i=1}^{p-1} \frac{\delta_i D^{i+1}}{i! T^\frac{i+1}{2}}.
    \end{gather*}
    Thus, 
    \[
        \textsc{GAP}(\tilde x_T) = \sup_{x \in \XCal} \left\langle F(x), \tilde x_T - x \right\rangle = O \left( \frac{\Lp D^{p + 1}}{T^\frac{p + 1}{2}} + \sum_{i=1}^{p-1} \frac{\delta_i D^{i+1}}{T^\frac{i+1}{2}} \right).
    \] 
\end{proof}

\subsection{Convergence in nonmonotone case}
To make our paper more self-contained, we provide convergence rate of Algorithm \ref{alg:tensor:main} in nonmonotone case.
\begin{customthm}{\ref{thm:tensor:nonmonotone}}
    Let Assumptions~\ref{as:smooth_p}, \ref{as:minty} and \ref{as:tensor:inexact} with $i = p-1$ hold.
    Then after $T \ge 1$ iterations of Algorithm \ref{alg:tensor:main} with parameters $\eta = 5p,\ \opt=2$ we get the following bound
    \[
        \textsc{RES}(\hat x) := \sup_{x \in \XCal} \left\langle F(\hat x_t), \hat x_t - x \right\rangle = O \left( \tfrac{\Lp D^{p + 1}}{T^\frac{p}{2}} + \textstyle{\sum}_{i=1}^{p-1} \tfrac{\delta_i D^{i + 1}}{T^\frac{i}{2}} \right).
    \]
\end{customthm}
\begin{proof}
    Consider $\left\langle F(x_k), x_k - x \right\rangle$.
    \begin{gather*}
        \left\langle F(x_k), x_k - x \right\rangle = \left\langle F(x_k) - \Omega_{p, v_k}(x_k), x_k - x \right\rangle +\left\langle \Omega_{p, v_k}(x_k), x_k - x \right\rangle \\
        \stackrel{\eqref{eq:tensor:subproblem}}{\le} \| F(x_k) - \Omega_{p, v_k}(x_k) \| \| x_k - x \| + \frac{\Lp}{p!} \| x_k - v_k \|^{p+1} + \sum_{i=1}^{p-1} \frac{\delta_i}{i!} \| x_k - v_k \|^{i+1}.
    \end{gather*}
    From definition of $\Psi_{p, v_k}(x_k)$ and triangle inequality we get
    \begin{gather*}
        \| F(x_k) - \Psi_{p, v_k}(x_k) \| = \| F(x_k) - \Omega_{p, v_k}(x_k)\| - \frac{5 \Lp}{(p-1)!}\| x_k - v_k \|^p - \sum_{i=1}^{p-1} \frac{\eta_i \delta_i}{i!} \| x_k - v_k \|^i.
    \end{gather*}
    From this and \eqref{eq:tensor:f_bound_inexact_p} we get
    \begin{gather*}
        \| F(x_k) - \Omega_{p, v_k}(x_k) \| \stackrel{\eqref{eq:tensor:f_bound_inexact_p}}{\le} \frac{\Lp}{p!} \| x_k - v_k \|^p + \sum_{i=1}^{p-1} \frac{1}{i!} \delta_i \| x_k - v_k \|^i + \frac{5 \Lp}{(p-1)!} \| x_k - v_k \|^p + \sum_{i=1}^{p-1} \frac{\eta_i \delta_i}{i!} \| x_k - v_k \|^i \\
        = \frac{\Lp}{p!} (5p + 1) \| x_k - v_k \|^p + \sum_{i=1}^{p-1} \frac{\delta_i}{i!} (\eta_i + 1) \| x_k - v_k \|^i.
    \end{gather*}
    Thus, 
    \begin{gather*}
        \left\langle F(x_k), x_k - x \right\rangle \\
        \le \frac{\Lp}{p!} (5p + 1) \| x_k - v_k \|^p \| x_k - x \| + \sum_{i=1}^{p-1} \frac{\delta_i}{i!} (\eta_i + 1) \| x_k - v_k \|^i \| x_k - x \| \\
        + \frac{\Lp}{p!} \| x_k - v_k \|^{p + 1} + \sum_{i=1}^{p-1} \frac{\delta_i}{i!} \| x_k - v_k \|^{i + 1} \\
        \le \frac{\Lp}{p!} \| x_k - v_k \|^p (5p + 2) D + \sum_{i=1}^{p-1} \frac{\delta_i}{i!} \| x_k - v_k \|^i (\eta_i + 2) D.
    \end{gather*}
    From second inequality of \eqref{eq:DE-error}  and definition of $x_{k_T}$ we get
    \begin{gather*}
        \| x_{k_T} - v_{k_t} \|^2 \equiv \min_{1 \le k \le T} \| x_k - v_k \|^2 \le \frac{1}{T} \sum_{k=1}^{T} \| x_k - v_k \|^2 \le \frac{4}{T} \| x^* - x_0 \|^2 \le \frac{4D^2}{T} \\
        \Leftrightarrow \| x_{k_T} - v_{k_T} \| \le \frac{4^\frac{i}{2} D^i}{T^\frac{i}{2}}.
    \end{gather*}

    Combining all these results, we get
    \begin{gather*}
        \textsc{RES}(\hat x) = \sup_{x \in \XCal} \left\langle F(x_k), x_k - x \right\rangle \le \frac{\Lp}{p!} \| x_k - v_k \|^p (5p+2)D + \sum_{i=1}^{p-1} \frac{\delta_i}{i!} \| x_k - v_k \|^i (5p + 2)D \\
        \le \frac{2^p (5p + 2)}{p!} \frac{\Lp D^{p + 1}}{T^\frac{p}{2}} + \sum_{i=1}^{p-1} \frac{2^i \delta_i}{i!} (\eta_i + 2) \frac{D^{i + 1}}{T^\frac{i}{2}} \\
        = O \left( \frac{\Lp D^{p + 1}}{T^\frac{p}{2}} + \sum_{i=1}^{p-1} \frac{\delta_i D^{i+1}}{T^\frac{i}{2}} \right)
    \end{gather*}
\end{proof}
\section{Experiment details}
\label{app:exp}

We consider the cubic regularized bilinear min-max problem of the form:
\begin{equation}
\label{eq:exp_problem}
    \min_{x\in \R^d} \max_{y\in \R^d} f(x,y)=  y^{\top}(Ax-b) + \tfrac{\rho}{6}\|x\|^3,
\end{equation}
where $\rho>0$, $b = [1,0,\ldots,0] \in \R^d$, and $A \in \R^{d \times d}$ such that
\[
A = \begin{bmatrix}
1 & -1 & 0 & \cdots & 0 \\
0 & 1 & -1 & \ddots &\vdots \\
\vdots & \ddots & \ddots & \ddots & 0\\
0 & \cdots & 0 & 1 & -1\\
0 & \cdots & 0 & 0 & 1\\.
\end{bmatrix}
\]
To reformulate it as variational inequality, we define $F(x) = [\nabla_x f(x,y), -\nabla_y f(x,y)]$.
Following \cite{lin2022explicit}, we plot the restricted primal-dual gap \eqref{def:restr_gap}, written in a closed form:
\begin{align*}
    \mathrm{gap}(z, \beta)= \frac{\rho}{6}\|x\|^3 + \beta\|Ax - b\| + \frac{2}{3}\sqrt{\frac{2}{\rho}}\|A^\top y\|^{\frac{3}{2}} + b^\top y,
\end{align*}
where $z = (x, y)$.

\textbf{Setup.}
All methods and experiments were performed using Python 3.11.5, PyTorch 2.1.2, numpy 1.24.3 on a 16-inch MacBook Pro 2023 with an Apple M2 Pro and 32GB memory.

\textbf{Parameters.}
For Figure \ref{fig:main}, the dimension of the problem $d=50$. The regularizer $\rho$ is set to $\rho = 1e-3$. The starting point $(x_0, y_0) = (0,0)$ is all zeroes. We present the results of the last iteration of each method or opt=1 from Algorithm \ref{alg:main}. The diameter $\beta$ for the gap is set as $\beta=1$. For QN methods, we set memory size $m=r=20$. We perform a total of $100000$ iterations for all methods except Perseus2 with $1000$ iterations. We plot each $500$ iterations for better visualization. In Figure \ref{fig:main}, the left plot refers to the gap decrease per iteration and the right plot refers to the gap decrease per JVP/operator computations. EG, Perseus1, VIQA computes $2$ operators per iteration and Perseus2 computes $d+1$ JVP/operators. 

We finetuned the learning rate and presented the run with the best results. For EG1: $lr = 0.5$. For Perseus1: $L_0=0.2334$. For Perseus2: $L_1= 0.0001$. For VIQA Broyden: $L_1=0.001$, $\delta = J^0 = 0.4$. For VIQA Damped Broyden: $L_1=0.001$, $\delta = J^0 = 0.22$. Note, that tuned $L_1=0.001$ is actually a theoretical smoothness constant for the problem \eqref{eq:exp_problem} and can be chosen with such value without finetuning. Also, Perseus2 is working with a smaller constant than theoretical.
\section{Application to minmax problems}
\label{app:minmax}

\subsection{Preliminaries}
    In this section, we consider the problem of finding a global saddle point of the following min-max optimization problem:
    \begin{equation}\label{eq:minmax}
        \min_{x \in \br^m} \max_{y \in \br^n} \ f(x, y), 
    \end{equation}
    i.e., a tuple $(x^*, y^*) \in \br^m \times \br^n$ such that
    \begin{equation*}
        f(x^*, y) \leq f(x^*, y^*) \leq f(x, y^*), \quad \textnormal{for all } x \in \br^m,\ y \in \br^n,
    \end{equation*}
    where the continuously differentiable objective function $f$ is convex-concave: $f(x, y)$ is convex in $x$ for all $y \in \br^n$ and concave in $y$ for all $x \in \br^m$.
    
    \begin{assumption}
        \label{as:minmax_smooth}
        The function $f(x, y) \in \mathcal{C}^2$ has $L$-Lipschitz-continious second-order derivative if 
        \begin{equation*}
        \label{eq:minmax_smooth}
            \|\nabla^2 f(z) - \nabla^2(v)\| \leq L\|z - v\|, \quad \textnormal{for all } z, v \in \R^{n+m}.
        \end{equation*}
    \end{assumption}
    
    Following~\cite{lin2022explicit}, we define the restricted gap function to measure the optimality of point $\hat{z} = (\hat{x}, \hat{y})$ 
    \begin{equation}\label{def:restr_gap}
        \mathrm{gap}(\hat{z}, \beta) = \max_{y: \|y - y^*\| \leq \beta} f(\hat{x}, y) - \min_{x: \|x - x^*\| \leq \beta} f(x, \hat{y})
    \end{equation}
    where $\beta$ is sufficiently large such that $\|\hat{z} - z^*\| \leq \beta$.
    
    Problem~\eqref{eq:minmax} is a special case of VI defined by the following operator
    \begin{equation}\label{def:grad}
    F(\z) = \begin{bmatrix} \gradx f(x, y) \\ - \grady f(x, y) \end{bmatrix}. 
    \end{equation}
    The Jacobian of $F$ is defined as follows 
    \begin{equation}\label{def:Jacobian}
        \nabla^2 F(z) = \begin{bmatrix} 
        \nabla^2_{xx} f(x, y) & \nabla^2_{xy} f(x, y)\\ 
        -\nabla^2_{xy} f(x, y)& -\nabla^2_{yy} f(x, y) \end{bmatrix}. 
    \end{equation}
    
    The following lemma~\cite{nemirovski2004prox, lin2022explicit} provides properties of operator $F$.
    \begin{lemma}
        \label{lm:minimax}
        Let Assumption~\ref{as:minmax_smooth} hold. Then
        \begin{enumerate}[noitemsep,topsep=0pt,leftmargin=12pt]
            \item The operator $F$ is monotone, i.e. satisfies Assumption~\ref{as:monotone}.
            \item The operator $F$ is $L$-smooth, i.e. satisfies Assumption~\ref{as:smooth_2}.
            \item $F(z^*) = 0$ for any global saddle point $z^* \in \R^{n+m}$ of the function $f$.
        \end{enumerate}
    \end{lemma}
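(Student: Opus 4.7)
The plan is to prove the three items of Lemma~\ref{lm:minimax} in sequence, leveraging the block structure of $\nabla F(z)$ in~\eqref{def:Jacobian} and the key identity $\nabla F(z) = D\, \nabla^2 f(z)$, where $D = \diag(I_m, -I_n)$. This identity cleanly transfers the regularity of $f$ from Assumption~\ref{as:minmax_smooth} over to $F$, and the convex-concave structure manifests itself transparently in the symmetric part of $\nabla F$.

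For item 1 (monotonicity), I would first compute the symmetric part $\tfrac{1}{2}(\nabla F(z) + \nabla F(z)^\top)$. Since $\nabla^2_{xy} f(z) = (\nabla^2_{yx} f(z))^\top$ by the symmetry of mixed partials, the off-diagonal blocks of the symmetrized Jacobian cancel out, leaving $\diag(\nabla^2_{xx} f(z),\, -\nabla^2_{yy} f(z))$. Convexity of $f$ in $x$ and concavity in $y$ then imply $\nabla^2_{xx} f \succeq 0$ and $-\nabla^2_{yy} f \succeq 0$, so the symmetric part is positive semidefinite everywhere. Writing $\langle F(z_1) - F(z_2), z_1 - z_2\rangle = \int_0^1 (z_1 - z_2)^\top \nabla F(z_2 + t(z_1 - z_2))(z_1 - z_2)\, dt$ and observing that only the symmetric part contributes to this quadratic form then yields~\eqref{eq:monotone}.

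For item 2 (smoothness), the identity $\nabla F(z) - \nabla F(v) = D(\nabla^2 f(z) - \nabla^2 f(v))$ combined with $\|D\|_\op = 1$ immediately gives $\|\nabla F(z) - \nabla F(v)\|_\op \leq \|\nabla^2 f(z) - \nabla^2 f(v)\|_\op \leq L\|z - v\|$ by Assumption~\ref{as:minmax_smooth}, which is exactly Assumption~\ref{as:smooth_2} with $L_1 = L$. For item 3, at any global saddle point $(x^*, y^*)$ the first-order optimality conditions for the inner minimization in $x$ and maximization in $y$ force $\nabla_x f(x^*, y^*) = 0$ and $\nabla_y f(x^*, y^*) = 0$, so the definition~\eqref{def:grad} yields $F(z^*) = 0$.

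This is a fairly standard calculation with no serious obstacle; the only subtle point worth flagging is that monotonicity crucially exploits the vanishing of the off-diagonal blocks in the symmetrized Jacobian, which in turn relies on the Schwarz symmetry of the second-order partials of $f$. Everything else reduces to elementary convexity/concavity arguments and the fact that the sign-flipping operator $D$ is an isometry in operator norm.
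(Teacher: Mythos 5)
Your proof is correct; note that the paper itself does not prove Lemma~\ref{lm:minimax} but simply cites \cite{nemirovski2004prox, lin2022explicit}, and your argument is exactly the standard one behind those references: the symmetrized Jacobian $\tfrac{1}{2}(\nabla F + \nabla F^\top) = \diag(\nabla^2_{xx} f, -\nabla^2_{yy} f) \succeq 0$ together with the integral representation gives monotonicity, the identity $\nabla F = D\,\nabla^2 f$ with the isometry $D = \diag(I_m,-I_n)$ gives $L$-smoothness, and unconstrained first-order optimality at the saddle point gives $F(z^*)=0$. No gaps; the only implicit prerequisite (that $f \in \mathcal{C}^2$ justifies both the Schwarz symmetry and the integral formula for $F$) is supplied by Assumption~\ref{as:minmax_smooth}.
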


    We assume that inexact approximation of Jacobian satisfies Assumption~\ref{as:inexact_jac}.

\subsection{The method}
    Since we consider unconstrained minmax optimization, Step 2 of \algo\ simplifies to $v_{k+1} = z_0 + s_k$ and the subproblem~\eqref{eq:subproblem} changes to 
    \begin{equation*}
        \textnormal{find } z_{k+1} \in \R^{n+m} \textnormal{ such that } \Omega^\eta_{v_{k+1}}(z_{k+1}) = 0.
    \end{equation*}
    Usually, to find the solution of this subproblem it is necessary to run some addition subroutine. Following the work~\cite{lin2022explicit}, we introduce the following approximate condition:
    \begin{equation}
        \label{eq:subproblem_cnd_minmax}
        \|\Omega^\eta_{v_{k+1}}(z_{k+1})\| \leq \tau \min \lb   \tfrac{L}{2}\|z_{k+1} - v_{k+1}\|^2 + \delta\|z_{k+1} - v_{k+1}\|, \|F(v_{k+1})\|\rb,§
    \end{equation}
    where $\tau \in (0, 1)$ is a tolerance parameter.

    The version of \algo\ for unconstrained min-max problems is referred to as \algominmax\ and is detailed in Algorithm~\ref{alg:minmax}. This subproblem can solved by strategy proposed in~\cite{lin2022explicit}, resulting in $O\ls (n+m)^\omega\e^{-2/3} + (n+m)^2\e^{-2/3}\log\log(1/\e)\rs$ complexity, where $\omega \approx 2.3728$ is the matrix multiplication constant.

    \begin{algorithm}[!t]
        \begin{algorithmic}\caption{\textsf{\algominmax}}
        \label{alg:minmax}
            \STATE \textbf{Input:} initial point $z_0 \in \XCal$, parameters $L_1$, $\delta$, $\eta$, $\tau$. 
            \STATE \textbf{Initialization:} set $s_0 = 0 \in \br^d$.
            \FOR{$k = 0, 1, 2, \ldots, T$} 
            \STATE Set $v_{k+1} = z_0 + s_{k+1}$. 
            \STATE Compute $z_{k+1} \in \R^{n+m}$ such that condition~\eqref{eq:subproblem_cnd_minmax} holds true. 
            \STATE Compute $\lambda_{k+1}$ such that $\tfrac{1}{16} \leq \lambda_{k+1} \ls \tfrac{L}{2}\|z_{k+1} - v_{k+1}\| + \delta\rs \leq \tfrac{1}{12}$. 
            \STATE Compute $s_{k+1} = s_k - \lambda_{k+1} F(z_{k+1})$. 
            \ENDFOR
            \STATE \textbf{Output:} $ \tilde{z}_T = \frac{1}{\sum_{k=1}^T \lambda_k}\sum_{k=1}^T \lambda_k z_k.$
        \end{algorithmic}
    \end{algorithm}

\subsection{Convergence analysis}
The following theorem provides convergence rate for Algorithm~\ref{alg:minmax}.
\begin{theorem}\label{thm:minmax}
    Let Assumptions~\ref{as:minmax_smooth},~\ref{as:inexact_jac} hold. Then, after $T \geq 1$ iterations of \algominmax\ with parameters $\eta = 5$, we get the following bound
    \begin{equation*}
        \mathrm{gap}(\tilde{z}_T, \beta)   
        \leq \tfrac{1152\sqrt{2}L\|z_0 - z^*\|^3}{T^{3/2}} +   \tfrac{576\sqrt{2}\delta\|z_0 - \tilde{z}^*\|^2}{T},
    \end{equation*}
    where $\beta = 5\|z_0 - z^*\|,~z^*=(x^*, y^*)$.
\end{theorem}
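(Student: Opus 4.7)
\textbf{Proof proposal for Theorem~\ref{thm:minmax}.} The plan is to mirror the proof of Theorem~\ref{thm:monotone} while adapting three structural differences: (i) the feasible set is unbounded, so iterates' sizes must be controlled by the optimality measure itself; (ii) Step~2 of Algorithm~\ref{alg:minmax} simplifies to $v_{k+1} = z_0 + s_k$ so the Lyapunov identity is sharper; (iii) the inexact subproblem condition~\eqref{eq:subproblem_cnd_minmax} is stated in norm form, which one converts to the inner-product form used in the monotone analysis via Cauchy--Schwarz: for any $z$, $\langle \Omega^\eta_{v_{k+1}}(z_{k+1}), z - z_{k+1} \rangle \geq -\|\Omega^\eta_{v_{k+1}}(z_{k+1})\|\cdot \|z - z_{k+1}\|$, and the right-hand side is controlled by $\tau(\tfrac{L}{2}\|z_{k+1}-v_{k+1}\|^2 + \delta\|z_{k+1}-v_{k+1}\|)\|z-z_{k+1}\|$.

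\textbf{Step 1 (Descent lemma).} By Lemma~\ref{lm:minimax}, $F$ is monotone and $L$-smooth, so Assumption~\ref{as:monotone} and Assumption~\ref{as:smooth_2} hold. Using the Lyapunov function $\mathcal{E}_k$ in~\eqref{eq:lyapunov} on $\R^{n+m}$ (so $v_{k+1} = z_0 + s_k$), I would reproduce the telescoping argument of Lemma~\ref{lm:DE-descent}, with the revised adaptive rule $\tfrac{1}{16}\le \lambda_{k+1}(\tfrac{L}{2}\|z_{k+1}-v_{k+1}\|+\delta)\le\tfrac{1}{12}$ and $\eta=5$ replacing the constants $\tfrac{1}{32},\tfrac{1}{22},10$ (the constants get absorbed; the appropriate choice of $\tau$ in~\eqref{eq:subproblem_cnd_minmax} ensures the residual term gets absorbed as well). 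This yields, for every $z\in\R^{n+m}$,
\begin{equation*}
\sum_{k=1}^T \lambda_k \langle F(z_k), z_k - z\rangle \le \tfrac{1}{2}\|z-z_0\|^2 - \tfrac{1}{16}\sum_{k=1}^T \|z_k - v_k\|^2.
\end{equation*}

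\textbf{Step 2 (Size control).} Setting $z=z^*$ and using monotonicity with $F(z^*)=0$ gives $\langle F(z_k), z_k - z^*\rangle \geq 0$, so $\sum_{k=1}^T \|z_k - v_k\|^2 \le 8\|z_0 - z^*\|^2$. I would also argue $\|v_k - z^*\|\le 2\|z_0 - z^*\|$ and hence $\|z_k - z^*\|\le 3\|z_0 - z^*\|$ (using the triangle inequality and the fact that $\lambda_k F(z_k) = s_{k-1}-s_k$ with $v_{k+1}-z^* = z_0 - z^* + s_k$ telescopes against the descent bound). This is the crux of handling unboundedness.

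\textbf{Step 3 (Lower bound on $\sum\lambda_k$).} Mimicking Lemma~\ref{lm:DE-control}: from the adaptive rule, $\lambda_k^{-2}\cdot\tfrac{1}{256}\le (\tfrac{L}{2}\|z_k-v_k\|+\delta)^2 \le \tfrac{L^2}{2}\|z_k-v_k\|^2 + 2\delta^2$; summing and using Step~2 yields $\sum \lambda_k^{-2} \le C_1 L^2\|z_0-z^*\|^2 + C_2 T\delta^2$. Hölder's inequality $T = \sum (\lambda_k^{-2})^{1/3}\lambda_k^{2/3} \le (\sum\lambda_k^{-2})^{1/3}(\sum\lambda_k)^{2/3}$ then gives
\begin{equation*}
\tfrac{1}{(\sum_{k=1}^T \lambda_k)^2} \le \tfrac{C_3 L^2 \|z_0-z^*\|^2}{T^3} + \tfrac{C_4 \delta^2}{T^2}.
\end{equation*}

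\textbf{Step 4 (Restricted gap).} By convex-concavity of $f$ and the form~\eqref{def:grad}, for any $z$ with $\|z-z^*\|\le\beta$ one has $f(\tilde{x}_T, y) - f(x, \tilde{y}_T) \le \langle F(z), \tilde{z}_T - z\rangle$. Apply monotonicity of $F$ to get $\langle F(z), \tilde{z}_T - z\rangle \le \tfrac{1}{\sum\lambda_k}\sum \lambda_k \langle F(z_k), z_k - z\rangle \le \tfrac{\|z-z_0\|^2}{2\sum\lambda_k}$. With $\beta = 5\|z_0-z^*\|$, the triangle inequality gives $\|z-z_0\|\le 6\|z_0-z^*\|$, so $\mathrm{gap}(\tilde{z}_T,\beta)\le \tfrac{18\|z_0-z^*\|^2}{\sum\lambda_k}$. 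Plugging in Step~3 and using $\sqrt{a+b}\le\sqrt{a}+\sqrt{b}$ yields the claimed rate with explicit constants.

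\textbf{Main obstacle.} The delicate point is Step~2: since the domain is unbounded, one cannot rely on a diameter to bound $\|z_k - z^*\|$, yet this bound is needed to make Step~3 quantitative and to justify the choice $\beta = 5\|z_0-z^*\|$. The key is to exploit $F(z^*)=0$ together with the sign from monotonicity to simultaneously control $\sum\|z_k-v_k\|^2$ and the individual displacements from $z^*$; verifying that the constant $5$ in $\beta$ is compatible with these bounds (rather than some larger multiple) is where care is needed.
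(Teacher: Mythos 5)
Your proposal follows essentially the same route as the paper: a descent lemma adapted from Lemma~\ref{lm:DE-descent} with the norm-form condition~\eqref{eq:subproblem_cnd_minmax} converted via Cauchy--Schwarz and $\tau<1$, the error bounds of Lemma~\ref{lm:DE-error}, iterate-size control via $F(z^*)=0$ and monotonicity to replace the missing diameter (the paper gets $\|v_k-z_0\|\le 2\|z_0-z^*\|$ from the Lyapunov bound plus Young's inequality, hence $\|z_k-z^*\|\le 5\|z_0-z^*\|=\beta$; your constants $2$ and $3$ are slightly optimistic but compatible with $\beta$), and a H\"older-type bound on $\sum_k\lambda_k$ exactly as in Lemma~\ref{lm:DE-control}. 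Up to constant bookkeeping these steps match the paper's proof.

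The one genuine flaw is the first inequality of your Step~4: convex-concavity does \emph{not} give $f(\tilde{x}_T,y)-f(x,\tilde{y}_T)\le\langle F(z),\tilde{z}_T-z\rangle$ with $F$ evaluated at the comparison point $z=(x,y)$. The inequality that convex-concavity yields evaluates the operator at the candidate point, $f(\tilde{x},y)-f(x,\tilde{y})\le\langle F(\tilde{z}),\tilde{z}-z\rangle$, and monotonicity then runs in the wrong direction to recover your claim ($\langle F(z),\tilde{z}-z\rangle$ is a \emph{lower} bound for $\langle F(\tilde{z}),\tilde{z}-z\rangle$). The correct bridge—and the one the paper uses, citing Proposition~2.9 of~\cite{lin2022explicit}—is the per-iterate bound $f(x_k,y)-f(x,y_k)\le\langle F(z_k),z_k-z\rangle$ followed by Jensen over the weights $\lambda_k$, which gives
\begin{equation*}
    f(\tilde{x}_T,y)-f(x,\tilde{y}_T)\;\le\;\tfrac{1}{\sum_{k=1}^T\lambda_k}\sum_{k=1}^T\lambda_k\,\langle F(z_k),z_k-z\rangle .
\end{equation*}
Since this is exactly the quantity you feed into Steps~1--3, the repair is local: replace your first two inequalities of Step~4 by this averaging argument and the rest of your chain (bounding $\|z-z_0\|\le\|z_0-z^*\|+\beta$ and plugging in the H\"older estimate) goes through as in the paper.
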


Now, let us introduce additional assumption on $\delta$, to obtain the convergence with optimal rate $O(\e^{-2/3})$.

\begin{theorem}\label{thm:minmax_exact}
    Let Assumptions~\ref{as:minmax_smooth} hold. Let
    \begin{equation*}
        \|(\nabla F(v_k) - J(v_k))[z_k - v_k]\| \leq \delta_k\|z_k - v_k\|
    \end{equation*}
    hold for iterates $\{z_k, v_k\}$ generated by Algorithm~\ref{alg:minmax}, where 
    \begin{equation}
        \label{eq:delta_bound_minmax}
        \delta_k \leq L\|z_k - v_k\|.
    \end{equation}
    Then, after $T \geq 1$ iterations of \algominmax\ with parameters $\eta = 5$, we get the following bound
    \begin{equation*}
        \mathrm{gap}(\tilde{z}_T, \beta) 
        \leq \tfrac{1944\sqrt{2}L\|z_0 - z^*\|^3}{T^{3/2}},
    \end{equation*}
    where $\beta = 5\|z_0 - z^*\|,~z^*=(x^*, y^*)$.
\end{theorem}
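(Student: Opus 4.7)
The plan is to mirror the proof of Theorem~\ref{thm:minmax} (the inexact version), tracking where the generic Assumption~\ref{as:inexact_jac} bound is used and replacing it with the sharper condition~\eqref{eq:delta_bound_minmax}. Concretely, \algominmax\ is the unconstrained specialization of \algo, so I would first establish the unconstrained analog of Lemma~\ref{lm:DE-descent}: a one-step Lyapunov contraction giving
\[
\textstyle\sum_{k=1}^T \lambda_k \langle F(z_k), z_k - z\rangle \le \mathcal{E}_0 - \mathcal{E}_T + \langle s_T, z - z_0\rangle - \tfrac{1}{8}\textstyle\sum_{k=1}^T \|z_k - v_k\|^2,
\]
using the subproblem tolerance~\eqref{eq:subproblem_cnd_minmax} in place of~\eqref{eq:subproblem_cnd}. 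The argument is identical to the constrained case up to replacing $\argmax$ over $\XCal$ by the linear shift $v_{k+1}=z_0+s_{k+1}$, and telescoping $s_k$.

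The crucial modification comes from the step-size rule. Under~\eqref{eq:delta_bound_minmax}, we have $\tfrac{L}{2}\|z_k - v_k\| + \delta_k \le \tfrac{3L}{2}\|z_k - v_k\|$, so the adaptive interval forces
\[
\lambda_k \ge \tfrac{1}{24 L \|z_k - v_k\|}.
\]
Squaring and summing then gives, by the inexact descent lemma applied at $z=z^*$ (i.e., the analog of Lemma~\ref{lm:DE-error}), $\sum_{k=1}^T \lambda_k^{-2} \le O(L^2 \|z_0 - z^*\|^2)$. Applying Hölder's inequality exactly as in Lemma~\ref{lm:DE-control-exact}, I would conclude
\[
\tfrac{1}{\sum_{k=1}^T \lambda_k} \le \tfrac{c\, L\|z_0 - z^*\|}{T^{3/2}},
\]
for an explicit constant $c$. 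In particular the $\delta T^{-1}$ contribution present in Theorem~\ref{thm:minmax} disappears because $\delta_k$ is now controlled geometrically by $\|z_k - v_k\|$, and it is already absorbed into the $L$-dependent term.

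To convert this into a bound on the restricted primal-dual gap $\mathrm{gap}(\tilde z_T, \beta)$, I would invoke the min-max specific step in the proof of Theorem~\ref{thm:minmax}: the descent inequality, combined with the second bound of (the unconstrained analog of) Lemma~\ref{lm:DE-error}, yields $\|v_k - z_0\| \le 2\|z^* - z_0\|$ and $\|z_k - z_0\| \le 4\|z^*-z_0\|$, so any test point $z$ with $\|z-z^*\|\le \beta = 5\|z_0 - z^*\|$ satisfies $\|z - z_0\| \le 6\|z_0-z^*\|$. Then
\[
\mathrm{gap}(\tilde z_T,\beta) \le \sup_{\|z-z^*\|\le \beta} \tfrac{\|z-z_0\|^2}{2\sum_{k=1}^T \lambda_k} \le \tfrac{36\|z_0 - z^*\|^2}{2\sum_{k=1}^T \lambda_k},
\]
which together with the preceding bound on $1/\sum_k \lambda_k$ produces the $O(L\|z_0-z^*\|^3/T^{3/2})$ rate with the stated constant $1944\sqrt{2}$ after tracking the numerical factors.

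The main obstacle I anticipate is the unconstrained boundedness argument for the restricted gap: we do not have a diameter $D$ of a compact $\XCal$, so the bound $\|z_k - z_0\|\le O(\|z_0-z^*\|)$ has to be extracted \emph{from} the Lyapunov telescoping itself rather than imposed. This is the same subtle point that appears in~\cite{lin2022explicit} and in the proof of Theorem~\ref{thm:minmax}, and the main calculation will be verifying that the tighter step-size lower bound from~\eqref{eq:delta_bound_minmax} does not invalidate that boundedness certificate; everything else is bookkeeping.
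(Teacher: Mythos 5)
Your proposal is correct and follows the paper's own route: the paper reuses the unconstrained descent inequality (Lemma~\ref{lm:DE-descent_minmax}) and the error bound (Lemma~\ref{lm:DE-error}), replaces the step-size control with the exact-information analog of Lemma~\ref{lm:DE-control} (using $\delta_k \le L\|z_k - v_k\|$ to absorb $\delta$ into the $L\|z_k - v_k\|$ term before H\"older), and then reruns the unconstrained boundedness argument of Theorem~\ref{thm:minmax} verbatim to bound the restricted gap. Your remark that the $\|z_k - z_0\| = O(\|z_0 - z^*\|)$ certificate must be extracted from the Lyapunov telescope rather than from a compact domain is precisely the subtlety the paper handles, and the sharper lower bound on $\lambda_k$ does not affect it since $\sum_k\|z_k - v_k\|^2 \le 4\|z^* - z_0\|^2$ is unchanged.
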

Note, that it is also possible to change adaptive strategy for $\lambda_k$ in this case to $\tfrac{1}{16} \leq \tfrac{3}{2}L\|z_k - v_k\| \leq \tfrac{1}{12}$ (by introducing parameter $\beta$ as in Algorithm~\ref{alg:main}) to eliminate the dependence on $\delta$ from the step size while achieving the same convergence rate.

Finally, let us show, that we mathch the convergence of ~\cite{lin2022explicit} under the same assumptions on Jacobian's inexactness and subproblem's solution.

\begin{assumption}[\cite{lin2022explicit}]
    Let
    \begin{equation*}
        \|(\nabla F(v_k) - J(v_k))[z_k - v_k]\| \leq \delta_k\|z_k - v_k\|, \quad \|J(v_k)\| \leq \kappa
    \end{equation*}
    hold for iterates $\{z_k, v_k\}$ generated by Algorithm~\ref{alg:minmax}, where 
    \begin{equation}
        \label{eq:delta_bound_lin_minmax}
        \delta_k \leq \min \lb  \tau_0, \tfrac{L(1 - \tau)}{4 \kappa + 6L}\|F(v_k)\| \rb,
    \end{equation}
    where $\tau_0 < \tfrac{L}{4}$.
\end{assumption}

Next, we show, that~\eqref{eq:delta_bound_minmax} follows from~\eqref{eq:delta_bound_lin_minmax}.

Let us consider two cases. If $\|z_k - v_k\| \leq 1$, then from \eqref{eq:delta_bound_lin_minmax}, we get $\delta_k \leq  \tau_0\|z_k - v_k\| \leq \tfrac{L}{2}\|z_k - v_k\|$. Otherwise, if $\|z_k - v_k\| \geq 1$, we obtain from~\eqref{eq:subproblem_cnd_minmax}
\begin{gather*}
    \|F(v_k)\| - \|J(v_k)\|\|z_k - v_k\| - \delta\|z_k - v_k\| - 5L\|z_k - v_k\| \leq \|\nabla \Omega_{v_k}(z_k)\| \leq \tau \|F(v_k)\|.
\end{gather*}
Using our assumptions, we get
\begin{gather*}
    (1 - \tau)\|F(v_k)\| \leq \kappa\|z_k - v_k\| + \delta\|z_k - v_k\| + 5L\|z_k - v_k\|^2 \leq (\kappa + \delta + 5L)\|z_k - v_k\| \\
    \leq (\kappa + \tfrac{21}{4}L)\|z_k - v_k\|.
\end{gather*}
Next, 
\begin{gather*}
    \delta \leq \tfrac{L(1 - \tau)}{4 \kappa + 6L}\|F(v_k)\| \leq L \|z_k - v_k\|.
\end{gather*}

Thus, the assumptions of Theorem~\ref{thm:minmax_exact} hold, and Algorithm~\ref{alg:minmax} achieves $O(\e^{-2/3})$ convergence. 

\subsection{Proof of Theorem~\ref{thm:minmax}}
Again, as in the proof of Theorem~\ref{thm:monotone}, we introduce the Lyapunov function 
\begin{equation}
\label{eq:lyapunov_minmax}
    \ECal_k = \max_{v \in \R^{n+m}} \ \langle s_k, v - z_0\rangle - \tfrac{1}{2}\|v - z_0\|^2.
\end{equation}
Note that the scalar product $\langle s_k, v - z_0\rangle$ can be omitted (see the proof of~\cite[Theorems 3.1, 4.1]{lin2022explicit}), which would also eliminate the somewhat redundant Step 2 of Algorithm~\ref{alg:minmax}. However, we chose to retain it, as it does not affect the method's performance. We retain 

\begin{lemma}
    \label{lm:DE-descent_minmax}
    Let Assumptions~\ref{as:minmax_smooth},~\ref{as:inexact_jac} hold. Then, for every integer $T \geq 1$, we have
    \begin{equation*}
        \sum_{k=1}^T \lambda_k \langle F(z_k), z_k - z\rangle \leq \ECal_0 - \ECal_T + \langle s_T, z - z_0\rangle -\tfrac{1}{8}\left(\sum_{k=1}^T \|z_k - v_k\|^2\right), \quad \textnormal{for all } z \in \R^{n+m}. 
    \end{equation*}
\end{lemma}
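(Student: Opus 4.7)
The statement is the unconstrained min-max counterpart of Lemma~\ref{lm:DE-descent}, so I would follow its three-stage template: (i) telescope the Lyapunov increments $\ECal_{k+1}-\ECal_k$ using the identity $\ECal_k=\langle s_k,v_{k+1}-z_0\rangle-\tfrac{1}{2}\|v_{k+1}-z_0\|^2$; (ii) rearrange so that the $\langle s_T,z-z_0\rangle$ boundary term appears; (iii) bound the leftover per-step dissipation $\textbf{II}=\sum_k\bigl(\lambda_k\langle F(z_k),z_k-v_{k+1}\rangle-\tfrac{1}{2}\|v_{k+1}-v_k\|^2\bigr)$ from above by $-\tfrac{1}{8}\sum_k\|z_k-v_k\|^2$. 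Stages (i) and (ii) transfer verbatim, because the steps \eqref{eq:DE-descent-first}--\eqref{inequality:DE-descent-fourth} of the proof of Lemma~\ref{lm:DE-descent} use only the Euclidean prox structure of the dual extrapolation subproblem, an unconstrained instance of which is Step 2 of Algorithm~\ref{alg:minmax} (so that $v_{k+1}=z_0+s_k$ is the unique, interior maximizer).

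Stage (iii) is where the two settings diverge. I would rewrite
\[
F(z_k)=\bigl[F(z_k)-\Psi_{v_k}(z_k)\bigr]+\Omega^\eta_{v_k}(z_k)-\eta\delta(z_k-v_k)-5L\|z_k-v_k\|(z_k-v_k)
\]
and pair with $z_k-v_{k+1}$. The first bracket is handled by Lemma~\ref{lm:f_bnd_inxt_2} and Cauchy--Schwarz, and the $\Omega^\eta_{v_k}(z_k)$ term by Cauchy--Schwarz together with the norm-form subproblem condition~\eqref{eq:subproblem_cnd_minmax}, yielding
\[
\bigl|\langle F(z_k)-\Psi_{v_k}(z_k)+\Omega^\eta_{v_k}(z_k),z_k-v_{k+1}\rangle\bigr|\le(1+\tau)\bigl(\tfrac{L}{2}\|z_k-v_k\|^2+\delta\|z_k-v_k\|\bigr)\|z_k-v_{k+1}\|.
\]
The remaining inner products $-5L\|z_k-v_k\|\langle z_k-v_k,z_k-v_{k+1}\rangle$ and $-\eta\delta\langle z_k-v_k,z_k-v_{k+1}\rangle$ are treated with the same identity $\langle z_k-v_k,z_k-v_{k+1}\rangle\ge\|z_k-v_k\|^2-\|z_k-v_k\|\|v_{k+1}-v_k\|$ used in the constrained proof, while $\|z_k-v_{k+1}\|\le\|z_k-v_k\|+\|v_{k+1}-v_k\|$ rewrites each $\|z_k-v_{k+1}\|$ as a combination of $\|z_k-v_k\|$ and $\|v_{k+1}-v_k\|$.

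Collecting powers with $\eta=5$ produces an estimate of the shape
\[
\lambda_k\langle F(z_k),z_k-v_{k+1}\rangle\le\lambda_k\bigl[-c_1 L\|z_k-v_k\|^3-c_3\delta\|z_k-v_k\|^2+c_2 L\|z_k-v_k\|^2\|v_{k+1}-v_k\|+c_4\delta\|z_k-v_k\|\|v_{k+1}-v_k\|\bigr]
\]
with explicit numerical constants. The narrower step-size window $\tfrac{1}{16}\le\lambda_k(\tfrac{L}{2}\|z_k-v_k\|+\delta)\le\tfrac{1}{12}$ is calibrated precisely so that, after multiplication by $\lambda_k$, the cross term compresses to $\tfrac{1}{2}\|z_k-v_k\|\|v_{k+1}-v_k\|$ and the negative quadratic coefficient to $\tfrac{1}{4}\|z_k-v_k\|^2$; the elementary inequality $\tfrac{1}{2}pq-\tfrac{1}{2}q^2\le\tfrac{1}{8}p^2$ applied with $p=\|z_k-v_k\|$, $q=\|v_{k+1}-v_k\|$, together with the $-\tfrac{1}{2}\|v_{k+1}-v_k\|^2$ already in $\textbf{II}$, then delivers the bound $\textbf{II}\le-\tfrac{1}{8}\sum_k\|z_k-v_k\|^2$ and completes the proof. \textbf{Main obstacle.} The delicate point is precisely this final coefficient matching: the Cauchy--Schwarz bound on $\langle\Omega^\eta_{v_k}(z_k),z_k-v_{k+1}\rangle$ introduces an extra $(1+\tau)$ prefactor on the cross terms that is absent in the constrained setting, and these must still be absorbed by the negative $-5L\|z_k-v_k\|^3$ and $-\eta\delta\|z_k-v_k\|^2$ coming from the regularizers inside $\Omega^\eta_{v_k}$; this is exactly what forces both the tighter step-size window (compared to $[1/32,1/22]$ in Algorithm~\ref{alg:main}) and the change of $\eta$ from $10$ to $5$.
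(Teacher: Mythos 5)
Your plan matches the paper's proof essentially step for step: the telescoping of $\ECal_k$ and the boundary term are reused unchanged, and the term $\textbf{II}$ is bounded via the same decomposition $F(z_k)=\bigl[F(z_k)-\Psi_{v_k}(z_k)\bigr]+\Omega^\eta_{v_k}(z_k)-\eta\delta(z_k-v_k)-5L\|z_k-v_k\|(z_k-v_k)$, with Lemma~\ref{lm:f_bnd_inxt_2}, Cauchy--Schwarz plus the norm-form condition~\eqref{eq:subproblem_cnd_minmax} (producing the extra $\tau$-dependent prefactor), the same two inner-product inequalities, and the step-size window absorbing everything into $\tfrac{1}{2}\|z_k-v_k\|\|v_k-v_{k+1}\|-\tfrac{1}{4}\|z_k-v_k\|^2-\tfrac{1}{2}\|v_k-v_{k+1}\|^2\le-\tfrac{1}{8}\|z_k-v_k\|^2$. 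Your exact numerical bookkeeping ($\eta=5$ vs.\ the $\eta=10$ quoted inside the paper's own proof, and $\tfrac{L}{2}\|z_k-v_k\|$ vs.\ $L\|z_k-v_k\|$ in the $\lambda_k$ rule) differs only where the paper is internally inconsistent, so this is not a substantive deviation.
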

\begin{proof}
     Following the proof of Lemma~\ref{lm:DE-descent}, we arrive at~\eqref{eq:DE-main-update}, where the proofs begin to slightly diverge due to the changes in the subproblem. At this juncture, we have:
     \begin{equation}\label{eq:DE-descent-third-minmax}
        \sum_{k=1}^T \lambda_k \langle F(z_k), z_k - z\rangle \leq \ECal_0 - \ECal_T + \langle s_T, z - z_0\rangle + \underbrace{\sum_{k=1}^T \lambda_k \langle F(z_k), z_k - v_{k+1}\rangle - \tfrac{1}{2}\|v_k - v_{k+1}\|^2}_{\textbf{II}}. 
    \end{equation}
    Then,
    \begin{equation*}
        \begin{gathered}
            \langle F(z_k), z_k - v_{k+1}\rangle \\
             =  \langle F(z_k) - \Omega^\eta_{v_k}(z_k) + \eta \delta(z_k - v_k) + 5L\|z_k - v_k\|(z_k - v_k), z_k - v_{k+1}\rangle \\ 
             + \langle \Omega^\eta_{v_k}(z_k), z_k - v_{k+1}\rangle - 5L\|z_k - v_k\| \langle z_k - v_k, z_k - v_{k+1}\rangle - \eta \delta \langle z_k - v_k, z_k - v_{k+1}\rangle \\
             \stackrel{Lem.~\eqref{lm:f_bnd_inxt_2},~\eqref{eq:subproblem_cnd_minmax}}{\leq}  \frac{L}{2}\|z_k - v_k\|^2\|z_k - v_{k+1}\| + \delta\|z_k - v_k\|\|z_k - v_{k+1}\| + \frac{\tau L}{2}\|z_k - v_k\|^2\|z_k - v_{k+1}\| \\
             +  \tau\delta \|z_k - v_k\|\|z_k - v_{k+1}\| - 5L\|z_k - v_k\|\langle z_k - v_k, z_k - v_{k+1}\rangle - \eta \delta \langle z_k - v_k, z_k - v_{k+1}\rangle 
        \end{gathered}
    \end{equation*}

    Next, using 
    $\langle z_k - v_k, z_k - v_{k+1} \rangle \geq \|z_k - v_k\|^2 - \|z_k - v_k\|\|v_k - v_{k+1}\|$ and $\|z_k - v_{k+1}\| \leq \|z_k - v_k\| + \|v_k - v_{k+1}\|$, we get

    \begin{equation*}
        \begin{gathered}
            \langle F(z_k), z_k - v_{k+1}\rangle\\
            \leq  (\tau + 1)\frac{L}{2}\|z_k - v_k\|^3 + (\tau + 1)\frac{L}{2}\|z_k - v_k\|^2\|v_k - v_{k+1}\| + (\tau + 1)\delta\|z_k - v_k\|^2  \\
             + (\tau + 1)\delta\|z_k - v_k\|\|v_k - v_{k+1}\| - 5L\|z_k - v_k\|\langle z_k - v_k, z_k - v_{k+1}\rangle - \eta \delta \langle z_k - v_k, z_k - v_{k+1}\rangle \\
             \stackrel{\tau < 1}{\leq} 6 L \|z_k - v_k\|^2\|v_k - v_{k+1}\| - 4 L \|z_k - v_k\|^3 \\
             + \ls \eta + 1\rs\delta \|z_k - v_k\|\|v_k - v_{k+1}\| - \ls\eta - 1\rs\delta \|z_k - v_k\|^2
        \end{gathered}
    \end{equation*}
    Next,
    \begin{gather*}
        \textbf{II}
             \leq \sum \limits_{k=1}^T \ls\tfrac{1}{2}\|z_k - v_k\|\|v_k - v_{k+1}\| - \tfrac{1}{4}\|z_k - v_k\|^2 - \tfrac{1}{2}\|v_k - v_{k+1}\|^2\rs,
    \end{gather*}
    where the last inequality is due to the following choice of $\eta=10$ and $\lambda: \\
    \tfrac{1}{16} \leq \lambda_k \ls L\|z_k - v_k\| + \delta\rs \leq \tfrac{1}{12}$.
    Then,
    \begin{equation}
    \label{inequality:DE-descent-sixth-minmax}
        \begin{gathered}
            \textbf{II} 
            \leq \sum \limits_{k=1}^T \tfrac{1}{2}\|z_k - v_k\|\|v_k - v_{k+1}\| - \tfrac{1}{4}\|z_k - v_k\|^2 - \tfrac{1}{2}\|v_k - v_{k+1}\|^2
            \leq - \tfrac{1}{8} \ls \textstyle{\sum}_{k=1}^T \|z_k - v_k\|^2\rs.
        \end{gathered}
    \end{equation}

    Plugging ~\eqref{inequality:DE-descent-sixth-minmax} into~\eqref{eq:DE-descent-third-minmax} yields that 
    \begin{equation*}
        \sum_{k=1}^T \lambda_k \langle F(z_k), z_k - z\rangle \leq \ECal_0 - \ECal_T + \langle s_T, z - z_0\rangle -\tfrac{1}{8}\left(\sum_{k=1}^T \|z_k - v_k\|^2 \right). 
    \end{equation*}
\end{proof}

Next, Lemma~\ref{lm:DE-error} can be applied. Next Lemma is a counterpart of Lemma~\ref{lm:DE-control} for the current choice of $\lambda$.

\begin{lemma}\label{lm:DE-control-minmax}
    Let Assumptions~\ref{as:minmax_smooth},~\ref{as:inexact_jac} hold. For every integer $T \geq 1$, we have
    \begin{equation}\label{eq:DE-control-minmax}
       \frac{1}{\ls \sum_{k=1}^T \lambda_k \rs^2} \leq \frac{2048 L^2 \|z^* - z_0\|^2}{T^3} + \frac{512 \delta^2}{T^2},   
    \end{equation}
    where $x^* \in \XCal$ denotes the weak solution to the VI. 
\end{lemma}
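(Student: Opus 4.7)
The plan is to mirror the argument in Lemma~\ref{lm:DE-control}, adapting the constants to the modified adaptive rule $\tfrac{1}{16} \leq \lambda_k(L\|z_k - v_k\| + \delta) \leq \tfrac{1}{12}$ used in Algorithm~\ref{alg:minmax}. Before starting, I would observe that the analogue of Lemma~\ref{lm:DE-error} applies in the unconstrained min-max setting: Lemma~\ref{lm:DE-descent_minmax} gives exactly the same descent inequality as Lemma~\ref{lm:DE-descent}, and by Lemma~\ref{lm:minimax} any global saddle point $z^*$ satisfies $F(z^*) = 0$, so $\langle F(z_k), z_k - z^*\rangle$ can be added/subtracted freely and the Minty-style bound $\sum_{k=1}^T \|z_k - v_k\|^2 \leq 4\|z^* - z_0\|^2$ goes through unchanged.

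First, I would use the lower end of the step-size rule to convert it into a pointwise upper bound on $\lambda_k^{-2}$: squaring $\lambda_k^{-1} \leq 16(L\|z_k - v_k\| + \delta)$ and applying the elementary inequality $(a+b)^2 \leq 2a^2 + 2b^2$ yields
\begin{equation*}
\sum_{k=1}^T \lambda_k^{-2} \;\leq\; 256\sum_{k=1}^T (L\|z_k - v_k\| + \delta)^2 \;\leq\; 512 L^2 \sum_{k=1}^T \|z_k - v_k\|^2 + 512 T \delta^2.
\end{equation*}
Plugging in the Lemma~\ref{lm:DE-error}-style bound $\sum_{k=1}^T \|z_k - v_k\|^2 \leq 4\|z^* - z_0\|^2$ then gives
\begin{equation*}
\sum_{k=1}^T \lambda_k^{-2} \;\leq\; 2048 L^2\|z^* - z_0\|^2 + 512 T \delta^2.
\end{equation*}

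Second, I would close the loop via Hölder's inequality exactly as in Lemma~\ref{lm:DE-control}, writing $T = \sum_{k=1}^T (\lambda_k^{-2})^{1/3}(\lambda_k)^{2/3} \leq (\sum_k \lambda_k^{-2})^{1/3} (\sum_k \lambda_k)^{2/3}$, so that $T^3 \leq (\sum_k \lambda_k^{-2})(\sum_k \lambda_k)^2$. Rearranging yields
\begin{equation*}
\frac{1}{(\sum_{k=1}^T \lambda_k)^2} \;\leq\; \frac{\sum_{k=1}^T \lambda_k^{-2}}{T^3} \;\leq\; \frac{2048 L^2\|z^*-z_0\|^2}{T^3} + \frac{512\delta^2}{T^2},
\end{equation*}
which is the claim.

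There is no real obstacle here beyond bookkeeping: the only subtlety is checking that Lemma~\ref{lm:DE-error} transfers to the unconstrained min-max setting with the approximate subproblem criterion~\eqref{eq:subproblem_cnd_minmax}, and this follows because Lemma~\ref{lm:DE-descent_minmax} already delivers the same $-\tfrac{1}{8}\sum \|z_k - v_k\|^2$ term, while $F(z^*) = 0$ supplies the required Minty-type nonnegativity when specializing $z = z^*$. Every remaining step is a direct rescaling of the proof of Lemma~\ref{lm:DE-control}, with the factor $32^2 = 1024$ there replaced by $16^2 = 256$ and the factor $\tfrac{L}{2}$ replaced by $L$ in the step-size bound, which together produce the constants $2048$ and $512$ claimed.
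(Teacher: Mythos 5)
Your proposal is correct and follows essentially the same route as the paper: lower-bound $\lambda_k$ via the adaptive rule $\lambda_k^{-1}\le 16(L\|z_k-v_k\|+\delta)$, expand with $(a+b)^2\le 2a^2+2b^2$, invoke the Lemma~\ref{lm:DE-error} bound $\sum_k\|z_k-v_k\|^2\le 4\|z^*-z_0\|^2$ (valid here since $F$ is monotone with $F(z^*)=0$ by Lemma~\ref{lm:minimax}), and close via H\"older exactly as in Lemma~\ref{lm:DE-control}. The constants $2048$ and $512$ you produce match the paper's.
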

\begin{proof}
    Without loss of generality, we assume that $z_0 \neq z^*$. We have
    \begin{gather*}
        \sum_{k=1}^T (\lambda_k)^{-2}16^{-2} \leq \sum_{k=1}^T (\lambda_k)^{-2}\ls\lambda_k\ls L\|z_k - v_k\| + \delta\rs\rs^{2} = \sum_{k=1}^T \ls L\|z_k - v_k\|  + \delta \rs^2 \\
        \leq \sum_{k=1}^T 2L^2\|z_k - v_k\|^2 + 2T\delta^2 \overset{\text{Lemma}~\ref{lm:DE-error}}{\leq} 8L^2\|z^* - z_0\|^2 + 2T\delta^2.  
    \end{gather*}
    By the H\"{o}lder inequality, we have
    \begin{equation*}
        \sum_{k=1}^T 1 = \sum_{k=1}^T \left((\lambda_k)^{-2}\right)^{1/3} (\lambda_k)^{2/3} \leq \left(\sum_{k=1}^T (\lambda_k)^{-2}\right)^{1/3}\left(\sum_{k=1}^T \lambda_k\right)^{2/3}.
    \end{equation*}
    Putting these pieces together yields that 
    \begin{equation*}
        T \leq 16^{2/3} (8L^2\|z^* - z_0\|^2 + 2\delta^2T)^{\frac{1}{3}} \ls\sum_{k=1}^T \lambda_k\rs^{2/3} ,
    \end{equation*}
    Plugging this into the above inequality yields that 
    \begin{equation*}
        \frac{1}{\ls \sum_{k=1}^T \lambda_k \rs^2} \leq \frac{2048 L^2 \|z^* - z_0\|^2}{T^3} + \frac{512 \delta^2}{T^2}   
    \end{equation*}
\end{proof}

Next, by Lemma~\ref{lm:DE-descent_minmax}, we have 
\begin{gather*}
    0 \stackrel{\text{Lem.~\ref{lm:minimax}}}{\leq} \sum_{k=1}^T  \lambda_k \langle F(z_k), z_k - z\rangle + -\tfrac{1}{8}\left(\sum_{k=1}^T \|z_k - v_k\|^2 \right) \leq \ECal_0 - \ECal_T + \langle s_T, z - z_0\rangle \\
    \stackrel{\eqref{eq:lyapunov_minmax}}{=} \frac{1}{2} \|v_0 - z_0\|^2 - \frac{1}{2}\|v_T - z_0\|^2 + \langle v_T - z_0, v^* - v_0\rangle. 
\end{gather*}

By applying Young's ineqaulity and the fact that $z_0 = v_0$, we get
\begin{gather*}
    0 \leq - \frac{1}{2}\|v_k - z_0\|^2 + \frac{1}{4}\|v_k - z_0\|^2 + \|z^* - z_0\|^2 = -\frac{1}{4}\|v_k - z_0\|^2 + \|z^* - z_0\|^2.
\end{gather*}

Thus, we have $\|v_k - z_0\| \leq 2 \|z^* - z_0\|$. From Lemma~\ref{lm:DE-error} we also have that $\|z_k - v_k\| \leq 2 \|z^* - z_0\|$. Thus,
\begin{gather*}
    \|v_k - z^*\| \leq \|v_k - z_0\| + \|z_0 - z^*\| \leq 3  \|z_0 - z^*\| \leq \beta,\\
    \|z_k - z^*\| \leq \|z_k - v_k\| + \|z_k - z^*\| \leq 5 \|z_0 - z^*\| = \beta.
\end{gather*}

Lemma~\ref{lm:DE-error} also implies
\begin{equation*}
    \sum_{k=1}^T \lambda_k (z_k - z)^\top F(z_k) \leq \tfrac{1}{2}\|z_0 - z\|^2. 
\end{equation*}
By Proposition~\cite[Proposition 2.9]{lin2022explicit}, we have 
\begin{equation*}
    f(\tilde{x}_T, y) - f(x, \tilde{y}_T) \leq \tfrac{1}{\sum_{k=1}^T \lambda_k}\left(\sum_{k=1}^T \lambda_k (z_k - z)^\top F(z_k)\right). 
\end{equation*}
Putting these pieces together yields
\begin{equation*}
f(\tilde{x}_T, y) - f(x, \tilde{y}_T) \leq \tfrac{1}{2(\sum_{k=1}^T \lambda_k)}\|z_0 - z\|^2. 
\end{equation*}
This together with Lemma~\ref{lm:DE-control-minmax} yields
\begin{equation*}
    f(\tilde{x}_T, y) - f(x, \tilde{y}_T) \leq \frac{8\sqrt{2}L\|z_0 - z^*\|\|z_0 - z\|^2}{T^{3/2}} +  \frac{4\sqrt{2}\delta\|z_0 - z\|^2}{T}. 
\end{equation*}
Since $\|z_k - z^*\| \leq \beta$ for all $k \geq 0$, we have $\|\tilde{z}_T - z^*\| \leq \beta$. By the definition of the restricted gap function, we have
\begin{gather*}
    \mathrm{gap}(\tilde{z}_T, \beta) \leq  \tfrac{32\sqrt{2}L\|z_0 - z^*\|(\|z_0 - z^*\|+\beta)^2}{T^{3/2}} +   \tfrac{16\sqrt{2}\delta(\|z_0 - z^*\|+\beta)^2}{T} \\
    \leq \tfrac{1152\sqrt{2}L\|z_0 - z^*\|^3}{T^{3/2}} +   \tfrac{576\sqrt{2}\delta\|z_0 - \tilde{z}^*\|^2}{T}. 
\end{gather*}
Therefore, we conclude from the above inequality that there exists some $T > 0$ such that the output $\hat{z}$ satisfies that $\mathrm{gap}(\hat{z}, \beta) \leq \epsilon$.

\subsection{Proof of Theorem~\ref{thm:minmax_exact}}
Lemma~\eqref{lm:DE-descent_minmax} hold with slight modification in $\lambda_k$ adaptive strategy.
Lemma~\ref{lm:DE-error} also holds. Next we need slight modification of  Lemma~\ref{lm:DE-control} for the current choice of $\lambda$.

\begin{lemma}\label{lm:DE-control-minmax-exact}
    Let Assumption~\ref{as:minmax_smooth}. For every integer $T \geq 1$, we have
    \begin{equation*}
        \frac{1}{\ls \sum_{k=1}^T \lambda_k \rs^2} \leq \frac{54 L^2 \|z^* - z_0\|^2}{T^3},   
    \end{equation*}
    where $x^* \in \XCal$ denotes the weak solution to the VI. 
\end{lemma}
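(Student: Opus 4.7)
The plan is to mirror the proof of Lemma~\ref{lm:DE-control-minmax} but exploit the tighter assumption $\delta_k \leq L\|z_k - v_k\|$ of Theorem~\ref{thm:minmax_exact}. This assumption lets us fold the Jacobian-inexactness term into the $L$-dependent term and drop the $\delta^2/T^2$ contribution altogether, yielding a pure exact second-order rate. Structurally the argument is the minmax analog of Lemma~\ref{lm:DE-control-exact}.

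First, I would invert the step-size lower bound $\tfrac{1}{16} \leq \lambda_k (\tfrac{L}{2}\|z_k - v_k\| + \delta_k)$ built into Algorithm~\ref{alg:minmax} and substitute $\delta_k \leq L\|z_k - v_k\|$ to get $\lambda_k^{-1} \leq 24 L\|z_k - v_k\|$, so that $\lambda_k^{-2} \leq C L^2 \|z_k - v_k\|^2$ for an absolute constant $C$. Next, summing over $k = 1,\ldots,T$ and applying the minmax analog of the second inequality in Lemma~\ref{lm:DE-error}, which remains valid because by Lemma~\ref{lm:minimax} the operator $F$ is monotone and $F(z^*) = 0$, I would obtain $\sum_k \lambda_k^{-2} \leq C' L^2 \|z^* - z_0\|^2$. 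Finally, H\"older's inequality
\[
T \;=\; \sum_{k=1}^T (\lambda_k^{-2})^{1/3} \lambda_k^{2/3} \;\leq\; \Big(\sum_{k=1}^T \lambda_k^{-2}\Big)^{1/3} \Big(\sum_{k=1}^T \lambda_k\Big)^{2/3},
\]
once cubed and rearranged, delivers the target
\[
\frac{1}{\big(\sum_{k=1}^T \lambda_k\big)^2} \;\leq\; \frac{C' L^2 \|z^* - z_0\|^2}{T^3}.
\]

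The only preparatory work is to record the minmax version of Lemma~\ref{lm:DE-error}: replace $x$ by $z$ throughout that lemma's short argument, use Lemma~\ref{lm:DE-descent_minmax} in place of Lemma~\ref{lm:DE-descent}, and observe that the Minty inequality $\langle F(z_k), z_k - z^*\rangle \geq 0$ follows from monotonicity together with $F(z^*)=0$; this yields $\sum_k \|z_k - v_k\|^2 \leq 4\|z^* - z_0\|^2$ exactly as before. The main obstacle I anticipate is matching the stated numerical constant $54$: the direct bookkeeping above produces a noticeably larger constant, so reaching the stated value will require tighter accounting --- for example, expanding $(\tfrac{L}{2}\|z_k - v_k\| + \delta_k)^2$ term by term rather than using a triangle-type inequality, or adopting the alternative adaptive rule $\tfrac{1}{16} \leq \tfrac{3L}{2}\lambda_k\|z_k - v_k\| \leq \tfrac{1}{12}$ suggested in the remark following Theorem~\ref{thm:minmax_exact}, under which $\delta_k$ disappears from the step-size condition entirely and the constant tracking becomes leaner.
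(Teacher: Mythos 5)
Your proposal follows essentially the same route as the paper: it inverts the step-size rule, uses $\delta_k \leq L\|z_k - v_k\|$ to absorb the inexactness into the $L$-term (the paper bounds the bracket by $\tfrac{3L}{2}\|z_k - v_k\|$, giving the $\tfrac{9L^2}{4}$ factor), invokes the minmax analog of Lemma~\ref{lm:DE-error}, and finishes with the same H\"older argument. Your worry about the constant is reasonable but not a gap — the paper's own displayed chain does not cleanly produce the stated $54$ either, so only the order of the bound should be taken at face value.
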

\begin{proof}
    Without loss of generality, we assume that $z_0 \neq z^*$. We have
    \begin{gather*}
        \sum_{k=1}^T (\lambda_k)^{-2}16^{-2} \leq \sum_{k=1}^T (\lambda_k)^{-2}\ls\lambda_k\ls L\|z_k - v_k\| + \delta\rs\rs^{2} = \sum_{k=1}^T \ls L\|z_k - v_k\|  + \delta \rs^2 \\
        \stackrel{\eqref{eq:delta_bound_minmax}}{\leq}
        \sum_{k=1}^T \tfrac{9L^2}{4}\|z_k - v_k\|^2 \overset{\text{Lemma}~\ref{lm:DE-error}}{\leq} \tfrac{9L^2}{4}\|z^* - z_0\|^2.  
    \end{gather*}
    By the H\"{o}lder inequality, we have
    \begin{equation*}
        \sum_{k=1}^T 1 = \sum_{k=1}^T \left((\lambda_k)^{-2}\right)^{1/3} (\lambda_k)^{2/3} \leq \left(\sum_{k=1}^T (\lambda_k)^{-2}\right)^{1/3}\left(\sum_{k=1}^T \lambda_k\right)^{2/3}.
    \end{equation*}
    Putting these pieces together yields that 
    \begin{equation*}
        T \leq 16^{2/3} (\tfrac{9}{4}L^2\|z^* - z_0\|^2)^{\frac{1}{3}} \ls\sum_{k=1}^T \lambda_k\rs^{2/3} ,
    \end{equation*}
    Plugging this into the above inequality yields that 
    \begin{equation*}
        \frac{1}{\ls \sum_{k=1}^T \lambda_k \rs^2} \leq \frac{54 L^2 \|z^* - z_0\|^2}{T^3}.  
    \end{equation*}
\end{proof}

Next, directly following the steps of proof of Theorem~\ref{thm:minmax}, we get the following rate
\begin{equation*}
    \mathrm{gap}(\tilde{z}_T, \beta) 
    \leq \tfrac{1944\sqrt{2}L\|z_0 - z^*\|^3}{T^{3/2}}.
\end{equation*}

\addtocontents{toc}{\protect\setcounter{tocdepth}{-1}}

\end{document}